\documentclass[dvips]{article}
\usepackage[utf8]{inputenc}
\usepackage[english]{babel}
\usepackage{amssymb}
\usepackage{amsmath}
\usepackage{amsthm}
\usepackage{dsfont}
\usepackage{mathrsfs}
\usepackage[pdftex]{graphicx}
\usepackage{caption}
\usepackage{enumerate}

\setlength{\oddsidemargin}{.5cm}
\setlength{\evensidemargin}{.5cm}
\setlength{\textwidth}{14.3cm}
\setlength{\textheight}{20cm}
\setlength{\topmargin}{1cm}

\DeclareGraphicsExtensions{.eps}

\newtheorem{thm}{Theorem}[section]
\newtheorem{prop}[thm]{Proposition}
\newtheorem{lem}[thm]{Lemma}
\newtheorem{cor}[thm]{Corollary}

\theoremstyle{definition}
\newtheorem{defn}[thm]{Definition}

\theoremstyle{remark}
\newtheorem{rmk}[thm]{Remark}

\newcommand{\ind}{\mathbf{ind}}
\newcommand{\Sv}{\Sigma_\varphi}
\newcommand{\Svp}{\Sigma_{\varphi_{per}}}
\newcommand{\Sm}{\Sigma_\varphi^-}
\newcommand{\Sp}{\Sigma_\varphi^+}
\newcommand{\gm}{\gamma_{\varphi_-}}
\newcommand{\gp}{\gamma_{\varphi_+}}
\newcommand{\gmk}{\gamma_{\varphi_-^k}}
\newcommand{\gpk}{\gamma_{\varphi_+^k}}

\date{\today}
\author{\textsc{Jullian} Yann}
\title{Explicit computation of the index of a positive outer automorphism of the free group}

\addcontentsline{toc}{section}{Introduction}

\begin{document}
\pagenumbering{arabic}
\begin{center}
\vbox{
\vspace*{4.7cm}
{\LARGE Explicit computation of the index of a positive outer automorphism of the free group}

\vspace*{0.5cm}

{\large Yann {\scshape Jullian}}\\
\vspace{2mm}
\small{\textit{Université Paul Cézanne, LATP\\
Avenue de l'Escadrille Normandie-Niémen, Case A\\
13397 Marseille cedex 20, France\\
yann.jullian@gmail.com}}

\vspace*{0.8cm}

\begin{abstract}
We give an algorithm for finding the index of a positive outer automorphism of the free group,
and prove the algorithm exits in a finite time.
\end{abstract}

}
\end{center}

\setcounter{tocdepth}{2} 
\tableofcontents{}


\section*{Introduction}

We consider an automorphism $\varphi$ of the free group $F_N$ ($N\ge 2$). The Scott conjecture, proven by Bestvina-Handel (\cite{BH})
states that the fixed subgroup $Fix(\varphi) = \{u\in F_N;~\varphi(u)=u\}$ of $\varphi$ has rank at most $N$.
Another proof of this result was given several years later in \cite{GLL}, and was improved soon after in \cite{GJLL}
by adding a key ingredient to the formula. The automorphism $\varphi$ induces a homeomorphism $\partial \varphi$ on
the Gromov boundary $\partial F_N$ of $F_N$. Define $a(\varphi) = \#(Att(\partial \varphi)/Fix(\varphi))$ as the
number of equivalence classes of points of $\partial F_N$ that are attracting (in the topological sense) for $\partial \varphi$.
In the article \cite{GJLL}, the index of $\varphi$ is defined as $\ind(\varphi) = rk(Fix(\varphi)) +\frac{1}{2}a(\varphi)-1$
and it is proven that $\ind(\varphi)\le N-1$. Further investigations are made, and the index of an outer automorphism is defined.
The outer class $\Phi$ of the automorphism $\varphi$ is the set $\{i_w\circ \varphi;~w\in F_N\}$ where $i_w$ is a
conjugacy; for any $u\in F_N$, $i_w(u) = w^{-1}uw$. The index of $\Phi$ is defined by
$$\ind(\Phi) = \sum\limits_{[\psi]}\max(0, \ind(\psi))$$
where the sum is taken over all isogredience classes of $\Phi$; two automorphisms $\psi$ and $\chi$ of $\Phi$ are isogredient if there is $v\in F_N$
such that $\chi = i_v\circ\psi\circ i_{v^{-1}}$. It is again stated in \cite{GJLL} that $\ind(\Phi)\le N-1$.

In the present article, we wish to give an algorithm able to compute explicitely (for a certain class of automorphisms) the
FO-index (for Full Outer index) of $\varphi$
defined by $\ind(\Phi_\infty) = \max\limits_{k\in \mathds{N}^*}(\ind(\Phi^k))$ where $\Phi^k$ is the outer class of $\varphi^k$.
In particular, the algorithm will indicate the relevant isogredience classes, give a basis of the fixed subgroups, and give representatives of
the equivalence classes of attracting points.\\

This index is of particular relevance when attempting to give a geometric interpretation of the dynamics of automorphisms of free groups.
It is now standard (\cite{GLL}, \cite{GJLL}, \cite{LL03}) to represent the dynamics of an automorphism $\varphi$
with an $\mathds{R}$-tree (a geodesic and $0$-hyperbolic metric space) $T_\varphi$ on which the free group acts isometrically and
the automorphism acts as a homothety; such a tree is called the invariant tree of $\varphi$.
In fact, the proofs of \cite{GLL} and \cite{GJLL} are mostly of a geometric nature, using this invariant tree.
In \cite{CHL09}, it is stated that the whole dynamics on $T_\varphi$ can be deduced from studying the induced
isometric action of the free group on certain compact subsets of $\overline{T_\varphi}$,
the metric completion of $T_\varphi$. These subsets are called the limit sets of $T_\varphi$,
and each one is associated to a basis of the free group. A detailed analysis
of these limit sets is made in \cite{CH}, and it is proven that the general structure of these sets
does not depend on the basis chosen, but on the FO-indices of $\varphi$ and its inverse.
Depending on these indices, the limit sets can either be a finite union of finite (with respect to the number
of points with degree at least three) trees, a finite union of non finite trees, a cantor set whose
convex hull is a finite tree, or a cantor set whose convex hull is a non finite tree.
In \cite{BK}, an example of cantor set (with an interval as convex hull) is treated, and \cite{Jul} deals
with a non finite tree. In any case, there are still some unanswered questions regarding the dynamics on these limit sets
and their understanding starts with the FO-index.\\

Section \ref{sec:auts} recalls classical definitions of symbolic dynamics and group theory.
We endow $F_N$ with a basis $A_N$ and we see the free group as the set of words with letters in $A_N$ or $A_N^{-1} = \{a^{-1};~a\in A_N\}$.
We assume the automorphism $\varphi$ is positive (for any $a\in A_N$, all the letters of $\varphi(a)$
are in $A_N$) and primitive (there exists an integer $k$ such that for any $a\in A_N$, all the letters of $A_N$
are contained in $\varphi^k(a)$).

In section \ref{sec:attsubcontainsall}, we prove that the FO-index of $\varphi$ can be obtained by only studying a compact
subset of the double boundary $\partial^2 F_N = (\partial F_N\times \partial F_N)\setminus \Delta$ (where $\Delta$ is the diagonal)
of $F_N$. This subset only depends on $\varphi$; it is called the attracting subshift of $\varphi$ and is denoted $\Sv$.
This eventually leads up to new formulas for the FO-index. These formulas depend on one key element; the singularities of
the attracting subshift.

Here, singularities are introduced as combinatorial objects. Namely, they are finite sets of
at least two elements of $\Sv$ that are all fixed points of a common homeomorphism $\partial^2 \psi$
(where $\partial^2 \psi$ is the homeomorphism induced by $\psi$ on $\partial^2 F_N$) with
$\psi = i_w\circ \varphi^k$ for some $w\in F_N$ and $k\ge 1$.
It is important to observe that singularities have a natural geometric interpretation when dealing with an automorphism
coming from a pseudo-Anosov on a surface, as they correspond to the singularities of the stable foliation.

We define the singularity graph, which contains all the information needed to compute the FO-index.
The singularity graph is determined by a set of nodes (the singularities),
a set of finite edges which are joining two singularities, and a set of infinite edges which are
anchored at a single node. The main result of section \ref{sec:attsubcontainsall} is the following.
\begin{thm}
Let $\varphi$ be an $A_N$-positive primitive automorphism and let $\mathcal{G}$
be the singularity graph associated to $\varphi$. Suppose the automorphism $\psi = i_w\circ \varphi^k$
for some $w\in F_N$ and $k\ge 1$ is such that $\partial^2 \psi$ fixes all the points of a singularity $\Omega$,
and define $\mathcal{G}_\Omega$ as the connected component of $\mathcal{G}$ containing $\Omega$. Then we have:
\begin{itemize}
	\item the number of equivalence classes of attracting points of $\partial \psi$ is equal to the number of infinite edges
	of $\mathcal{G}_\Omega$,
	\item the rank of the fixed subgroup of $\psi$ is equal to the rank of the fundamental group of $\mathcal{G}_\Omega$.
\end{itemize}
\end{thm}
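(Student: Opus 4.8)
The plan is to set up a correspondence between, on one side, the attracting points and independent fixed elements of $\psi = i_w\circ\varphi^k$, and, on the other side, the infinite edges and the homology of the component $\mathcal{G}_\Omega$. Since $\partial^2\psi$ fixes every point of the singularity $\Omega$, the homeomorphism $\partial\psi$ already has at least $\#\Omega\ge 2$ fixed points on $\partial F_N$, and because $\varphi$ is positive and primitive these fixed points are (apart from the pair corresponding to the attracting lamination leaf passing through $\Omega$) attracting for $\partial\psi$. First I would recall the local picture: around each singularity, the points of $\Sv$ that are ``attached'' to it are exactly the boundary points realized as one-sided infinite words read along the branches of the singularity; a finite edge of $\mathcal{G}$ records that two singularities lie on a common bi-infinite word of $\Sv$ (a common leaf), and an infinite edge records a branch that runs off to an attracting endpoint without meeting another singularity. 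So traversing $\mathcal{G}_\Omega$ and reading off the branches gives a concrete list of the orbits under $Fix(\psi)$ of the attracting points of $\partial\psi$, and the first bullet should follow once I check that this list is exhaustive and non-redundant.

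For the first bullet, the key steps are: (i) show every attracting point of $\partial\psi$ is the endpoint of a branch of some singularity fixed by $\partial^2\psi$ — this is where I would invoke the main structural result of Section~\ref{sec:attsubcontainsall}, namely that $\Sv$ already contains all the data needed for the FO-index, together with the fact that an isolated attracting ray must, by primitivity, accumulate onto the finite part of the lamination and hence be anchored at a singularity; (ii) show that two branches give the same equivalence class under $Fix(\psi)$ precisely when they are connected in $\mathcal{G}_\Omega$ by a path of finite edges, i.e.\ when there is an element of $F_N$ (fixing $w$-twisted $\varphi^k$) carrying one leaf to the other — this uses that finite edges are exactly the ``bridges'' realized by group elements commuting with $\psi$; (iii) conclude that the number of classes equals the number of infinite edges of $\mathcal{G}_\Omega$, after checking an infinite edge is never identified with another one (distinct branches off the same finite subtree are not in the same $Fix(\psi)$-orbit, again by a positivity/expansion argument).

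For the second bullet, I would build an explicit isomorphism between $Fix(\psi)$ and $\pi_1(\mathcal{G}_\Omega)$. The finite part of $\mathcal{G}_\Omega$ — the subgraph spanned by singularities and finite edges — should be thought of as (a spine of) the quotient by $Fix(\psi)$ of the subtree of the lamination spanned by the $\psi$-fixed leaves: vertices are $Fix(\psi)$-orbits of singularities, finite edges are $Fix(\psi)$-orbits of the ``bridging'' segments, and the loops in this graph are generated exactly by those elements of $F_N$ that fix $w^{-1}\varphi^k(\cdot)w$ and act nontrivially on the fixed-leaf subtree. I would make this precise by a covering-space / Bass–Serre-type argument: the preimage of $\mathcal{G}_\Omega$'s finite part in $F_N$ is a tree on which $Fix(\psi)$ acts freely with quotient that finite graph, so $Fix(\psi)\cong\pi_1(\text{finite part of }\mathcal{G}_\Omega)$, and since attaching the infinite edges (which are trees, hence contractible, anchored at a single node) does not change $\pi_1$, this equals $\pi_1(\mathcal{G}_\Omega)$. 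Care is needed to see that $Fix(\psi)$ is exactly this group and not something larger — i.e.\ that an element fixing a fixed leaf of $\psi$ and all attracting directions anchored at $\Omega$ must already be visible in $\mathcal{G}_\Omega$; this is the subtle point, and I expect it to follow from the rigidity of $\Sv$ established earlier, namely that the $\psi$-fixed part of the lamination is rebuilt precisely by $\mathcal{G}_\Omega$.

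The main obstacle will be step (ii) above together with its analogue in the second bullet: proving that finite edges of $\mathcal{G}$ capture \emph{all} the identifications made by $Fix(\psi)$ and \emph{only} those — in other words, that two singularities are joined by a finite edge in $\mathcal{G}$ if and only if some element of $F_N$ conjugating $\psi$ appropriately carries one to the other along a leaf of $\Sv$. Establishing the ``only those'' direction requires ruling out unexpected coincidences among branches, which should come from the expansion coming from primitivity (two distinct attracting rays stay uniformly apart under iteration), while the ``all'' direction requires that the singularity graph was constructed to record every such bridge — a point that presumably rests on the combinatorial definition of $\mathcal{G}$ given just before the theorem. Once this dictionary between finite/infinite edges of $\mathcal{G}_\Omega$ and the geometry of the $\psi$-fixed sublamination is nailed down, both bullets are formal consequences: counting infinite edges for the attracting points, and a standard graph-of-spaces computation of $\pi_1$ for the rank of $Fix(\psi)$.
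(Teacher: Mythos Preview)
Your overall plan --- identify infinite edges with equivalence classes of attracting points and identify $\pi_1(\mathcal{G}_\Omega)$ with $Fix(\psi)$ --- is the right target, but the mechanism you propose is not the one the paper uses, and in its present form it has a real gap.

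The paper does not set up a covering-space or Bass--Serre picture. Instead it works directly with the \emph{path map} $\gamma$: for any path $(\Omega,\Omega_1,\dots,\Omega_h)$ one shows (Lemma inside the proof) that $\gamma(\Omega,\dots,\Omega_h)V$ is fixed by $\partial\psi$ for every $V\in H_{\Omega_h}$. This immediately gives one direction of both bullets: cycles at $\Omega$ produce elements of $Fix(\psi)$, and paths ending at an infinite edge produce attracting points. The hard direction --- that \emph{every} $u\in Fix(\psi)$ comes from a cycle, and every attracting point from an infinite edge --- is done by a concrete ``consumption'' argument: given $u$ (or an attracting point $vV$), one peels off the longest prefix that agrees with a coordinate of some point of $\Omega$, lands at a new singularity by Proposition~\ref{prop:case1}, and iterates. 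Proposition~\ref{prop:case1} (the trivial-fixed-subgroup case of Theorem~\ref{thm:twopoints}) is the engine here; it guarantees that after each step one is again at a singularity, so the process traces out a path in $\mathcal{G}_\Omega$ and terminates because $u$ is finite (respectively, because cycles cannot have pure labels, by Proposition~\ref{prop:notpure}).

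Your proposal skips this engine. You describe finite edges as ``bridges realized by group elements commuting with $\psi$'' and hope that $Fix(\psi)$ acts freely on some preimage tree with quotient $\mathcal{G}_\Omega$, but you never say what that tree is or why the action is free; the finite-edge label $u$ is a pure positive shift along a leaf, not an element of $Fix(\psi)$, and the conjugating elements only appear after composing along a \emph{cycle} via $\gamma$. More seriously, your step~(ii) (and its analogue for $Fix(\psi)$) needs exactly the statement that an attracting point $vV$ or a fixed word $u$ can be ``walked'' through $\mathcal{G}_\Omega$; your proposed ``positivity/expansion'' and ``rigidity of $\Sv$'' justifications are placeholders for what is, in the paper, a repeated invocation of Proposition~\ref{prop:case1}. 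Without that proposition (or an explicit substitute), you cannot rule out that some $u\in Fix(\psi)$ fails to decompose along finite edges, which is precisely the ``subtle point'' you flag but do not resolve.
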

The singularity graph also allows to determine a base of the fixed subgroup of $\psi$ and give finite expressions
of representatives of the equivalence classes of attracting points. It should be noted that a similar graph
is defined in \cite{CL}, and although its primary use was to determine the fixed subgroup of an automorphism,
the graph also contains attracting points. In fact, the present article shares several similarites with \cite{CL}.

It all comes down to finding singularities, which is the aim of the study of section \ref{sec:identsing}.
We obtain the following result.
\begin{thm}
There exists an algorithm able to find all the singularities in a finite number of steps.
\end{thm}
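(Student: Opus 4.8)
The plan is to reduce the problem of finding all singularities to a finite search among bounded-length data. Recall that a singularity is a finite set $\Omega\subset\Sv$ of at least two biinfinite words, all of which are fixed points of a common $\partial^2\psi$ with $\psi=i_w\circ\varphi^k$. The first step will be to bound the relevant parameters. Since $\varphi$ is $A_N$-positive and primitive, there is an integer $k_0$ such that $\varphi^{k_0}(a)$ contains every letter of $A_N$; iterating, $\varphi^k$ is expanding in a controlled way, and one obtains a Perron--Frobenius estimate on the growth of word lengths. I would use this to show that if $\psi=i_w\circ\varphi^k$ fixes two distinct points $X,Y\in\Sv$, then the conjugator $w$ can be chosen of length bounded in terms of $k$ and the transition matrix of $\varphi$ (essentially because $\psi$ must send the common prefix rays of the fixed points to themselves, pinning down $w$). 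Simultaneously, one shows there is an a priori bound $k\le K$ on the exponents that need to be tested: once $\varphi^{k}$ is ``large enough'' that all attracting fixed rays have separated, increasing $k$ only reiterates the same singularities. This is the combinatorial heart of the argument.

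Second, for each candidate pair $(w,k)$ in the (now finite) search space, I would decide algorithmically whether $\partial^2\psi$ has a fixed point in $\Sv$ and enumerate those fixed points. A biinfinite fixed word of $\partial^2\psi=\partial^2(i_w\circ\varphi^k)$ corresponds to a biinfinite path in a finite automaton built from $\varphi^k$ and $w$ (the ``Nielsen path'' / indivisible-Turing-machine type construction): a fixed ray is determined by a fixed point of the substitution $\varphi^k$ read from a seed letter, possibly corrected by $w$, and there are only finitely many such seeds. So the fixed-point set of $\partial^2\psi$ inside $\Sv$ is computable, and one checks whether it has at least two elements lying in $\Sv$ (membership in the attracting subshift $\Sv$ being decidable since $\Sv$ is a sofic/substitutive subshift with an explicit description from Section~\ref{sec:attsubcontainsall}).

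Third, having produced, for every admissible $(w,k)$, the finite fixed-point set of $\partial^2\psi$ in $\Sv$, I would group these into singularities and verify (using the first step's bounds) that no singularity has been missed: any singularity arises from some $\psi=i_w\circ\varphi^k$, and by the reduction we may assume $(w,k)$ lies in the finite search space, so it is detected. Collecting all of them terminates the algorithm in finitely many steps, since each stage — bounding $K$, enumerating conjugators of bounded length, building and analyzing each finite automaton, testing membership in $\Sv$ — is a finite computation.

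The main obstacle will be the first step: getting an \emph{effective} bound on the conjugator length $|w|$ and on the exponent $k$. The subtlety is that two words in $\Sv$ can share arbitrarily long common subwords without being asymptotic, so ``fixed by a common $\partial^2\psi$'' does not immediately localize $w$; one must exploit the contraction properties of $\varphi^k$ on the attracting lamination (the expansion in the forward direction forces the backward direction to contract) to show that the pairing of fixed points rigidly determines $w$ up to bounded ambiguity. Once that quantitative localization is in hand, the rest is bookkeeping over finite sets.
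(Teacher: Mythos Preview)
Your overall architecture---bound the parameters $(w,k)$, enumerate the finitely many candidates, test each---is reasonable, but the bounding step is where the argument is genuinely incomplete, and the paper circumvents it by a different mechanism rather than by the contraction/separation heuristic you sketch.

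Concretely, your proposed bound on $k$ (``once $\varphi^k$ is large enough that all attracting fixed rays have separated'') does not work as stated: passing to higher powers of $\varphi$ produces new loops in the prefix-suffix automaton, and nothing about ray separation prevents a singularity from first appearing at a large $k$. The paper obtains the bound $k\le 4N-4$ (Proposition~\ref{prop:rawpower}) not from growth estimates but from a counting argument: every point of a singularity has a preperiodic prefix-suffix development (Theorem~\ref{thm:fixedisperiodic}), the length of the period is the relevant $k$, and the total number of points lying in singularities is bounded through the index inequality $\ind(\Phi_\infty)\le N-1$ combined with the formula of Theorem~\ref{thm:therealcount}. As for $|w|$, the paper never bounds it a priori and then enumerates; instead it \emph{derives} $w$ from the prefix-suffix data. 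Theorem~\ref{thm:findingw} reconstructs $w$ from a constant development via the iteration maps $\gamma_{\varphi_-}$, $\gamma_{\varphi_+}$, and Theorem~\ref{thm:mainresult} reduces the statement ``two points lie in a common singularity'' to an equality $\gamma_{\varphi_-^k}^{\,i}(p)=\gamma_{\varphi_-^k}^{\,j}(q)$ (or its $\gamma_{\varphi_+^k}$ analogue) between iterates of prefixes. Termination of that search is Proposition~\ref{prop:wbound}, whose proof is a cancellation analysis in $\varphi^{-k}$ (the $\gamma$-bound of Definition~\ref{defn:gbound}), not a Perron--Frobenius estimate. So the obstacle you single out is real, your suggested route through it is not the one that works, and the paper's way around it is to replace the blind search over conjugators by a structural parametrization of fixed points through the prefix-suffix automaton.
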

We use the properties of a common tool in symbolic dynamics;
the prefix-suffix automaton. We explain in section \ref{subsec:identifying} how this
automaton allows us to identify singularities with relative ease, and an algorithm
identifying all the singularities is detailed in section \ref{subsec:algorithm}.

We end the article by giving two typical examples (section \ref{sec:examples}).

\section{Automorphisms of the free group}\label{sec:auts}

Let $F_N$ be the free group on $N\ge 2$ generators and let $\partial F_N$ be its Gromov boundary. The \textbf{double boundary} $\partial^2F_N$ is defined by
\begin{center}
	$\partial^2F_N = (\partial F_N\times \partial F_N) \setminus \Delta$,
\end{center}
where $\Delta$ is the diagonal.

Let $A_N = \{a_0, \dots, a_{N-1}\}$ be a basis of $F_N$. The set of inverse letters is denoted by $A_N^{-1} = \{a_0^{-1}, \dots, a_{N-1}^{-1}\}$.
Fixing $A_N$ as a basis, we consider $F_N$ to be the set of finite \textbf{reduced} words $v=v_0v_1\dots v_p$ with letters in $(A_N\cup A_N^{-1})$;
reduced means for all $0\le i < p$, we have $v_i^{-1}\ne v_{i+1}$. The length of the word $v$ is $|v| = p+1$.
If $u$ and $u'$ are words of $F_N$, writing $|uu'|$ refers to the length of the reduced word $v$
defined by $v=uu'$. The identity element of $F_N$ is identified with the empty word $\epsilon$ and has length $0$.

We will often need to specify that the concatenation of two elements is cancellation free.
Following from \cite{CL}, for $u\in F_N$ and $u'\in F_N$, we write $u*u'$ if no
cancellations occur between $u$ and $u'$; in other words, if $u_p$
is the last letter of $u$ and $u_0'$ is the first letter of $u'$, then $u_p^{-1}\ne u_0'$.
This notation will be used very frequently throughout this article.\\

With $A_N$ as basis of $F_N$, the set $\partial F_N$ is the set of points $V=(V_i)_{i\in \mathds{N}}$ with letters in $(A_N\cup A_N^{-1})$
and such that $V_i^{-1}\ne V_{i+1}$ for any $i\in \mathds{N}$.

The free group $F_N$ acts continuously on $\partial F_N$ by left translation: if $v=v_0\dots v_p\in F_N$ and $V=V_0V_1\dots \in \partial F_N$,
then $vV = v_0\dots v_{p-i-1}V_{i+1}\dots V_p\dots$, where $V_0\dots V_i = v_p^{-1}\dots v_{p-i}^{-1}$ is the longest common prefix of
$v^{-1}$ and $V$, is in $\partial F_N$. We write again $v*V$ if no cancellation occur between
$v$ and $V$. Obviously, $F_N$ also acts on $\partial^2 F_N$: if $v\in F_N$ and $(U, V)\in \partial^2 F_N$, then $v(U, V) = (vU, vV)\in \partial^2 F_N$

\begin{rmk}
In order to avoid possible confusions, sequences of elements of $F_N$ or $\partial F_N$ will be denoted $(V_{(n)})_n$
rather than simply $(V_n)_n$, and we will keep the notation $V_n$ to refer to the $n$th letter of $V$.
\end{rmk}
\begin{rmk}
We will refer to elements of $F_N$ as words, while elements of $\partial F_N$ and $\partial^2 F_N$
will be called points.
\end{rmk}

A word $u\in F_N$ is a \textbf{prefix} of an element $V\in F_N\cup \partial F_N$ if $V = u*V'$ for some $V'\in F_N\cup \partial F_N$. The word $u$
is a \textbf{suffix} of the word $v\in F_N$ if $v = v'*u$ for some $v'\in F_N$. A prefix or suffix $u$ of a word $v$
is said to be \textbf{strict} if $u\ne v$.
We will say that $V\in F_N\cup \partial F_N$ is \textbf{pure positive} (resp. \textbf{pure negative}) if all of its letters are in $A_N$
(resp. $A_N^{-1}$). The word $V$ is \textbf{pure} if it is either pure positive or pure negative. We assume the empty word $\epsilon$
to be neither pure positive nor pure negative. Lastly,
for a word $v$ of $F_N$, a pair $v_iv_{i+1}$, where $v_i\in A_N$ (resp. $v_i\in A_N^{-1}$)
and $v_{i+1}\in A_N^{-1}$ (resp. $v_{i+1}\in A_N$) is called an \textbf{orientation change}.\\

An automorphism $\varphi$ of $F_N$ is $\boldsymbol{A_N}$\textbf{-positive} if for all $a\in A_N$, $\varphi(a)$ is pure positive.
When working with an $A_N$-positive automorphism, we always consider its representation over the free group endowed with
the basis $A_N$. The automorphism $\varphi$ induces a homeomorphism $\partial\varphi$ on $\partial F_N$, and
a homeomorphism $\partial^2\varphi$ on $\partial^2 F_N$.\\

Throughout this paper, $F_N$ will refer to the free group endowed with the basis $A_N$ and we always assume $N\ge 2$.

	\subsection{The attracting subshift}
	We define the shift map $S$ on $\partial^2 F_N$:
	\begin{center}
		\begin{tabular}{cccc}
			$S~:$ & $\partial^2 F_N$ & $\to$ & $\partial^2 F_N$\\
			& $(X, Y)$ & $\mapsto$ & $(Y_0^{-1}X, Y_0^{-1}Y)$,
		\end{tabular}
	\end{center}
	where $Y_0$ is the first letter of $Y$. For any point $Z\in \partial^2 F_N$, we define
	the $\boldsymbol{S}$-\textbf{orbit} of $Z$ as the set $\{S^n(Z);~n\in \mathds{Z}\}$.

	An $A_N$-positive automorphism is \textbf{primitive} if there is a positive integer $k$ such that all letters of $A_N$
	are letters of $\varphi^k(a)$ for any $a\in A_N$.\\

	Let $\varphi$ be an $A_N$-positive primitive automorphism and
	let $a$ be a letter of $A_N$. The primitivity condition implies that we can find an integer $k$
	such that $\varphi^k(a) = p*a*s$ where $p$ and $s$ are non empty pure positive words of $F_N$.
	Now define
	\begin{center}
		$X = \lim\limits_{n\to +\infty} p^{-1}\varphi^k(p^{-1})\varphi^{2k}(p^{-1})\dots \varphi^{nk}(p^{-1})$,\\
		$Y = \lim\limits_{n\to +\infty} as\varphi^k(s)\varphi^{2k}(s)\dots \varphi^{nk}(s)$.
	\end{center}
	The \textbf{attracting subshift} of $\varphi$ is the closure of the $S$-orbit of the point $(X, Y)$:
	\begin{center}
		$\Sv = \overline{\{S^n(X, Y); n\in \mathds{Z}\}}$.
	\end{center}
	The map $S$ is a homeomorphism on $\Sv$.
	The attracting subshift only depends on $\varphi$ and not on the choice of the letter $a$ or the integer $k$
	(this is stated in different contexts in both \cite{Que} and \cite{BFH} (for example)).
	Note that $\partial^2 \varphi(\Sv)$ is a subset of $\Sv$.
	For any power $k\in \mathds{N}^*$ of $\varphi$, the automorphism $\varphi^k$ is also $A_N$-positive, primitive,
	and its attracting subshift is $\Sigma_{\varphi^k} = \Sv$.

	The projection of $\Sv$ on its first (resp. second) coordinate will be denoted $\Sm$ (resp. $\Sp$).
	The set $\Svp\subset \Sv$ of periodic points of $\partial^2\varphi$ is defined by
	\begin{center}
		$\Svp = \{(X, Y)\in \Sv;~\exists k\in \mathds{N}^*;~\partial^2 \varphi^k(X, Y) = (X, Y)\}$.
	\end{center}
	Finally, we define the \textbf{half lamination} as the set $L(\varphi) = F_N.(\Sm\cup \Sp)$.

	\begin{rmk}
	We will most often keep the notation $U$ (resp. $V$, $(U, V)$) to refer to a point in $\Sm$ (resp. $\Sp$, $\Sv$),
	while generic points of $\partial F_N$ will be denoted $X$ or $Y$.
	\end{rmk}

	\subsection{The index of an automorphism}\label{subsec:index}

	The index of an automorphism $\varphi$ (see \cite{GJLL}) of the free group $F_N$ is defined by:
	$$
		\ind(\varphi) = rk (Fix (\varphi)) + \frac{1}{2}(a(\varphi) - 2),
	$$
	where $Fix (\varphi) = \{u\in F_N;~\varphi(u)=u\}$ and $a(\varphi)$ is the number of equivalence classes of attracting points of $\partial F_N$:
	specifically, $a(\varphi) = \#(Att (\partial \varphi)/Fix (\varphi))$. Here, the word attracting should be understood in the topological sense.

	The set of conjugacy $i_w:u\mapsto w^{-1}uw$ of $F_N$ is denoted $Inn(F_N)$. Recall that the outer class $\Phi$ of $\varphi$ is the set of automorphisms
	$\{i_w\circ \varphi;~i_w\in Inn(F_N)\}$. Two automorphisms $\psi$ and $\chi$ of $\Phi$ are \textbf{isogredient} if there is $v\in F_N$
	such that $\chi = i_v\circ\psi\circ i_{v^{-1}}$. The index of the outer automorphism $\Phi$ (see \cite{GJLL}) is defined by:
	\begin{center}
		$\ind(\Phi) = \sum\limits_{[\psi]} \max (0, \ind(\psi))$,
	\end{center}
	where the sum is taken over all isogredience classes of $\Phi$.\\

	The aim of this paper is to provide an algorithm able to find the index defined below.
	\begin{defn}
	Let $\Phi_\infty$ be the set $\Phi_\infty = \bigcup\limits_{k\in \mathds{N}^*} \Phi^k$, where $\Phi^k$ is the outer class of $\varphi^k$.
	The \textbf{FO-index} (for Full Outer index) of $\varphi$ is defined by
	\begin{center}
		$\ind(\Phi_\infty) = \max\limits_{k\in \mathds{N}^*} (\ind(\Phi^k))$.
	\end{center}
	\end{defn}
	The reader is referred to \cite{GJLL} for a geometric approach of the problem, and for a proof of the following result.
	\begin{thm}[\cite{GJLL}]\label{thm:gjll}
		Let $\varphi$ be an automorphism of $F_N$ and let $\Phi_\infty$ be the set $\{i_w\circ \varphi^k;~i_w\in Inn(F_N),~k\in \mathds{N}^*\}$.
		Then we have:
		\begin{center}
			$\ind(\Phi_\infty) \le N-1$.
		\end{center}
	\end{thm}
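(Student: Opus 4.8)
The plan is to reduce the statement to the index inequality for a single outer class, which is the one proved in \cite{GJLL}, and then (for completeness) recall why that inequality holds. The reduction is immediate: by definition $\ind(\Phi_\infty) = \max_{k\in\mathds{N}^*}(\ind(\Phi^k))$, so it suffices to prove that $\ind(\Phi^k)\le N-1$ for every fixed $k\ge 1$. But $\varphi^k$ is again an automorphism of the same free group $F_N$, and $\Phi^k$ is precisely its outer class, so $\ind(\Phi^k)\le N-1$ is exactly the bound $\ind(\Phi)\le N-1$ of \cite{GJLL} applied with $\varphi^k$ in place of $\varphi$. Taking the maximum over $k$ of quantities each bounded by $N-1$ then gives $\ind(\Phi_\infty)\le N-1$. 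The only point worth stressing is that the single-class bound must be invoked in full generality — for an arbitrary automorphism of $F_N$, not just for a positive primitive one — which is indeed the setting of \cite{GJLL}.

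For the single-class bound itself, the argument of \cite{GJLL} rests on two ingredients that I would recall. First, a finiteness statement: among the infinitely many isogredience classes of an outer class, only finitely many contain an automorphism $\psi$ with $Fix(\psi)\ne\{1\}$ or with $a(\psi)\ge 3$, hence only finitely many have $\ind(\psi)>0$, so that the sum $\sum_{[\psi]}\max(0,\ind(\psi))$ defining $\ind(\Phi)$ is a finite sum. Second, the numerical bound: one represents $\Phi$ by a homotopy equivalence $f:G\to G$ of a finite graph $G$ with $\pi_1(G)\cong F_N$ and no vertices of valence $\le 2$, so that $-\chi(G)=N-1$; lifting $f$ to the universal cover $\widetilde G$, the isogredience classes with $\ind(\psi)>0$ correspond to certain fixed vertices (together with fixed directions) of lifts of $f$, and for each such vertex $v$ the quantity $rk(Fix(\psi))+\tfrac12(a(\psi)-2)$ is controlled by a local contribution at $v$ — roughly the number of fixed directions minus $1$, corrected by the rank of the group fixing $v$. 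Summing these local contributions over a fundamental domain yields exactly $-\chi(G)=N-1$, and discarding the negative contributions only decreases the total, giving $\ind(\Phi)\le N-1$.

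The main obstacle is not in the reduction, which is a one-line manipulation of definitions, but inside the cited inequality: the delicate step is the dictionary between the dynamics of $\partial^2\psi$ on the double boundary (attracting points, fixed subgroup) and the combinatorics of a train-track-like representative $f:G\to G$, together with the finiteness of the isogredience classes carrying positive index. Since all of this is carried out in \cite{GJLL}, in the present paper the theorem is obtained simply by citing that work and applying it to each power $\varphi^k$ before taking the maximum over $k$.
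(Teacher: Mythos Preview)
Your proposal is correct and matches the paper's treatment: the paper does not prove this theorem at all but simply refers the reader to \cite{GJLL}, and your reduction---observing that $\ind(\Phi_\infty)=\max_k\ind(\Phi^k)$ so it suffices to apply the single-class bound of \cite{GJLL} to each $\varphi^k$---is exactly the implicit step. Your additional sketch of how \cite{GJLL} obtains the single-class bound goes beyond what the paper itself provides, but it is accurate and does no harm.
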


	\subsection{Prefix-suffix representation}\label{subsec:psr}
	In this section, we slightly adapt definitions and results given by V. Canterini and A. Siegel in \cite{CanSie}.
	In order to be able to use these concepts freely, we first need to check that an $A_N$-positive primitive automorphism
	can not be $S$-periodic (its attracting subshift is not finite).
	\begin{prop}\label{prop:notSper}
	An $A_N$-positive primitive automorphism $\varphi$ is not $S$-periodic.
	\end{prop}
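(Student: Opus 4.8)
The plan is to argue by contradiction: suppose $\varphi$ is $A_N$-positive, primitive, and $S$-periodic, meaning its attracting subshift $\Sv$ is finite (equivalently, the $S$-orbit of the defining point $(X,Y)$ is finite). I would first extract the combinatorial consequence of finiteness. Since $\Sv$ is finite and $\partial^2\varphi(\Sv)\subset\Sv$ with $\partial^2\varphi$ injective, $\partial^2\varphi$ permutes $\Sv$, so some power $\varphi^m$ fixes every point of $\Sv$. Replacing $\varphi$ by $\varphi^m$ (still $A_N$-positive and primitive, same attracting subshift), I may assume $\partial^2\varphi(X,Y)=(X,Y)$ for the particular point $(X,Y)$ constructed from the letter $a$ and integer $k$ in the definition of $\Sv$. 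Recall that point has the form $Y = a s\,\varphi^k(s)\,\varphi^{2k}(s)\cdots$, a pure positive point of $\partial F_N$.

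Next I would exploit that $Y\in\Sp$ is a pure positive fixed point of $\partial\varphi$ (after passing to the power, $\partial\varphi^{k}$ fixes $Y$, and I can absorb $k$ too). Being $S$-periodic with finite orbit forces $Y$ to be an eventually periodic sequence: $Y = w_0\,\overline{w}$ for a finite word $w_0$ and a finite word $w$ repeated forever, because $\Sv$ finite means $\Sp$ finite, and a shift-orbit-closure that is finite consists only of eventually periodic sequences. So $Y$ is (pre)periodic, say with period word $w$, $|w|=\ell$. Then apply $\partial\varphi$ (or the relevant power fixing $Y$): since $\varphi$ is positive, $\varphi$ acts on the periodic tail by substitution, and $\varphi(Y)=Y$ together with periodicity of $Y$ gives a strong self-similarity constraint on the finite word $w$ and on the images $\varphi(a)$ of letters. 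Concretely, $\varphi$ maps the periodic point with period $w$ to the periodic point with period-block obtained by substituting $w$; equality with $Y$ up to the preperiod forces $|\varphi(w)| = |w|$ counted appropriately — but $\varphi$ primitive forces the substitution to be expanding, i.e. the incidence matrix $M_\varphi$ is primitive with Perron eigenvalue $> 1$, so lengths grow: $|\varphi^n(w)|\to\infty$. This is the contradiction.

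To make the length argument airtight I would use the abelianization/incidence matrix $M_\varphi$ of $\varphi$, whose $(i,j)$ entry counts occurrences of $a_i$ in $\varphi(a_j)$; primitivity of $\varphi$ makes $M_\varphi$ a primitive nonnegative integer matrix, hence (Perron–Frobenius) it has a unique dominant eigenvalue $\lambda$ with $\lambda\ge 1$, and $\lambda = 1$ only if $M_\varphi$ is a permutation matrix — impossible here since primitivity requires every $\varphi^k(a_j)$ to contain all letters, forcing $|\varphi^k(a_j)| \ge N \ge 2$, so $M_\varphi$ is not a permutation matrix and $\lambda>1$. Then for any nonempty word $u$, $|\varphi^n(u)|$ grows like $\lambda^n$. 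Applied to the period word $w$ of the eventually periodic fixed point $Y$: fixedness of $Y$ under some $\partial\varphi^r$ means the periodic tails match, which forces $\varphi^r$ to send the two-sided periodic word with period $w$ to one with the same (two-sided) orbit, hence to a periodic word whose minimal period has the same length $|w|$ — contradicting $|\varphi^r(w)| \sim \lambda^{r}|w| \to \infty$.

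The main obstacle I anticipate is the bookkeeping in the middle step: carefully deducing from "$\Sv$ finite" that the relevant point $Y$ is eventually periodic and pinning down exactly which power of $\varphi$ fixes it and how $\varphi$ acts on its periodic part, without circularity. Once the fixed point is known to be eventually periodic with a genuine finite period word, the Perron–Frobenius length-growth argument closes it cleanly; alternatively one can avoid eventual periodicity altogether by noting directly that $|\varphi^{nk}(s)|\to\infty$ (primitivity), so the word $Y = as\,\varphi^k(s)\,\varphi^{2k}(s)\cdots$ has infinitely many distinct prefixes with unbounded "fresh" content and therefore cannot lie in a finite shift space — this is probably the shortest route and the one I would actually write up.
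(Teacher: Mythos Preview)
Your overall strategy matches the paper's: assume $\Sv$ is finite, pass to a power so that some $Y\in\Sp$ is a pure positive periodic fixed point of $\partial\varphi$, and derive a length contradiction from primitivity. However, your closing step does not go through as written. From $\partial\varphi^r(\overline{w})=\overline{w}$ (with $w$ the minimal period) you get $\overline{\varphi^r(w)}=\overline{w}$, and you conclude that the minimal period of the image is still $|w|$; that is correct, but it does \emph{not} contradict $|\varphi^r(w)|\to\infty$. All the equality of infinite words gives you is $\varphi^r(w)=w^{m_r}$ for some integer $m_r\ge 1$, and nothing prevents $m_r$ from growing with $r$. Your ``shortest route'' has the same defect: the blocks $\varphi^{nk}(s)$ can be arbitrarily long while still being factors of a single periodic word, so unbounded block length alone does not prevent $Y$ from lying in a finite shift space.

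The gap is easy to close, and doing so lands you on the paper's argument. From $\varphi^r(w)=w^{m}$ in $F_N$, apply the group inverse $\varphi^{-r}$ to obtain $w=(\varphi^{-r}(w))^{m}$; since the minimal period $w$ is pure positive and not a proper power in $F_N$, this forces $m=1$, hence $\varphi^r(w)=w$, and \emph{now} primitivity contradicts $|\varphi^r(w)|=|w|$. The paper runs this without Perron--Frobenius: writing $V=vV$ with $v$ the minimal period and noting that $V$ is also fixed by $\partial\varphi^{-1}$, one has $\varphi^{-1}(v)V=V$; since the stabilizer of $V$ in $F_N$ is cyclic generated by $v$, this gives $\varphi^{-1}(v)=v^h$ for some $h\in\mathds{Z}$, i.e.\ $v=\varphi(v^h)$. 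A two-line case analysis on the sign of $h$ (positivity rules out $h\le 0$; primitivity gives $|\varphi(v^h)|>|v^h|\ge|v|$ for $h>0$) finishes the proof.
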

	\begin{proof}
	Suppose $\varphi$ is $S$-periodic. Any point $W=(U, V)$ of its attracting subshift $\Sv$ is $S$-periodic: there exists
	a non empty pure positive word $v$ of $F_N$ such that $W=vW$. We may assume $V$ is fixed by $\partial \varphi$ (if not, we simply
	take a power of $\varphi$).
	Since $V=vV$ is also a fixed point of $\partial \varphi^{-1}$, we have $V=\varphi^{-1}(v)V$, which gives $\varphi^{-1}(v)=v^h$ and
	$v=\varphi(v^h)$ for some $h\in \mathds{Z}$.

	If $h<0$, then $v^h$ and $\varphi(v^h)$ are pure negative, while $v$ is pure positive.

	If $h=0$, then $v = v^h = \varphi(v^h) = \epsilon$ and we have a contradiction.

	If $h>0$, then $|v^h|\ge |v|$. Since $\varphi$ is primitive, the word $v$ contains all the letters of $A_N$ and
	$|\varphi(v^h)| > |v^h| > |v|$, which is impossible. 
	\end{proof}

	Let $\varphi$ be an $A_N$-positive primitive automorphism and $\Sv$ its attracting subshift.
	The \textbf{prefix-suffix automaton} of $\varphi$ is defined as follows:
	\begin{itemize}
		\item $A_N$ is its set of vertices,
		\item $P = \{(p, a, s)\in F_N\times A_N\times F_N;~\exists b\in A_N;~\varphi(b) = p*a*s\}$ is the set of labels,
		\item there is an edge labeled $(p, a, s)$ from $a$ to $b$ if and only if $\varphi(b) = p*a*s$.
	\end{itemize}
	The set of sequences of labels of infinite walks in this automaton is denoted $D$ and is called the set of admissible developments.
	\begin{prop}
	If $(p_i, a_i, s_i)_{i\ge 0}$ is in $D$, then for all $n\in \mathds{N}$, $\varphi(a_{n+1}) = p_n*a_n*s_n$.
	\end{prop}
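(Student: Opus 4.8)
The plan is to prove this by simply unwinding the definitions of the prefix-suffix automaton and of the set $D$ of admissible developments. Recall that, by definition, a sequence $(p_i, a_i, s_i)_{i\ge 0}$ belongs to $D$ precisely when it is the sequence of labels read along some infinite walk
$$v_0 \xrightarrow{\;e_0\;} v_1 \xrightarrow{\;e_1\;} v_2 \xrightarrow{\;e_2\;} \cdots$$
in the automaton, where the edge $e_i$ carries the label $(p_i, a_i, s_i)$. So fixing such a sequence amounts to fixing such a walk, and the whole content of the statement is a translation of the edge relation of the automaton into a statement about the labels alone.

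The key steps I would carry out are as follows. First, since an edge labeled $(p, a, s)$ goes, by the very definition of the automaton, from the vertex $a$ to some vertex, the source of $e_i$ is exactly $a_i$; that is, $v_i = a_i$ for every $i\in \mathds{N}$. Next, because $e_0, e_1, e_2, \dots$ form a walk, the target of $e_n$ coincides with the source of $e_{n+1}$; hence this target equals $v_{n+1} = a_{n+1}$. Finally, the defining condition of the automaton says that there is an edge labeled $(p_n, a_n, s_n)$ from $a_n$ to $a_{n+1}$ if and only if $\varphi(a_{n+1}) = p_n * a_n * s_n$, and this is the asserted identity. Note that the cancellation-free writing $p_n * a_n * s_n$ is inherited directly from the definition of the label set $P$, so nothing extra is needed on that front.

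I do not expect any real obstacle here: the proposition is essentially a reformulation of how edges of the prefix-suffix automaton are defined, and the only points requiring (minimal) care are the orientation convention for walks — reading each edge $e_i\colon a_i \to a_{i+1}$ from its source to its target — and the bookkeeping of indices in the label sequence. One may additionally remark that, $\varphi$ being an automorphism and therefore injective, the condition $\varphi(b) = p*a*s$ determines $b$ uniquely, so that a walk is in fact recovered from its label sequence; this is not needed for the statement itself, but it is worth recording as it clarifies the correspondence between walks and elements of $D$.
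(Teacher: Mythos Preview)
Your argument is correct and is exactly the intended one: the proposition is an immediate unwinding of the definition of the automaton's edges and of admissible developments, and the paper accordingly states it without proof. There is nothing to add or change.
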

	In \cite{CanSie}, the authors define a map
	$\rho_{\varphi}:\Sv\to D$ which gives a representation of the attracting subshift's structure. We will refer
	to $\rho_{\varphi}$ as the \textbf{prefix-suffix development map} associated to $\varphi$. If $W = (U, V)$ is a point of $\Sv$,
	then $\rho_{\varphi}(W) = (p_i, a_i, s_i)_{i\ge 0}$ is called the \textbf{prefix-suffix development} of $W$, and
	it is such that
	\begin{itemize}
		\item if $(s_i)_{i\in \mathds{N}}$ is not eventually constant equal to $\epsilon$,
		then $V = \lim\limits_{n\to +\infty} a_0s_0\varphi(s_1)\dots \varphi^n(s_n)$,
		\item if $(p_i)_{i\in \mathds{N}}$ is not eventually constant equal to $\epsilon$,
		then $U = \lim\limits_{n\to +\infty} p_0^{-1}\varphi(p_1^{-1})\dots \varphi^n(p_n^{-1})$.
	\end{itemize}
	Those developments whose prefix or suffix sequence end up being constant equal to $\epsilon$ are identified in \cite{CanSie},
	and they will require special attention throughout this article. Let $D_{min}, D_{max}$ and $D_\epsilon$ be the subsets of $D$ defined by
	\begin {itemize}
		\item $D_{min} = \{(p_i, a_i, s_i)_{i\ge 0} \in D;~\forall i\in \mathds{N}, p_i = \epsilon\}$
		\item $D_{max} = \{(p_i, a_i, s_i)_{i\ge 0} \in D;~\forall i\in \mathds{N}, s_i = \epsilon\}$
		\item $D_\epsilon = \{(p_i, a_i, s_i)_{i\ge 0} \in D;~(\exists i_0\in \mathds{N};~\forall i\ge i_0, p_i=\epsilon)$
		or $(\exists i_0\in \mathds{N};~\forall i\ge i_0, s_i=\epsilon)$\}.
	\end{itemize}
	\begin{thm}[\cite{CanSie}]
	Let $\varphi$ be an $A_N$-positive primitive automorphism, $\Sv$ its attracting subshift, $\Svp\subset \Sv$ its
	set of periodic points, and $\rho_{\varphi}:\Sv\to D$ its prefix-suffix development map. The sets $D, D_{min}, D_{max}$ and $D_\epsilon$
	are defined as above.

	The map $\rho_{\varphi}$ is continuous and onto. It is one-to-one except on $\bigcup\limits_{n\in \mathds{Z}} S^n(\Svp)$ and
	it verifies
	\begin{itemize}
		\item $\rho_{\varphi}(\Svp) = D_{min}$ and $\rho_{\varphi}^{-1}(D_{min}) = \Svp$,
		\item $\rho_{\varphi}(S^{-1}(\Svp)) = D_{max}$ and $\rho_{\varphi}^{-1}(D_{max}) = S^{-1}(\Svp)$,
		\item $\rho_{\varphi}(\bigcup\limits_{n\in \mathds{Z}} S^n(\Svp)) = D_\epsilon$ and $\rho_{\varphi}^{-1}(D_\epsilon) = \bigcup\limits_{n\in \mathds{Z}} S^n(\Svp)$.
	\end{itemize}
	\end{thm}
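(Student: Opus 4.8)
The plan is to follow the structure of the original argument in \cite{CanSie}, adapted to the present $A_N$-positive setting, in three stages: (i) construct/recall the explicit form of $\rho_\varphi$ and verify it is well-defined, continuous and onto; (ii) locate precisely where injectivity fails; (iii) identify the three exceptional sets $D_{min}$, $D_{max}$, $D_\epsilon$ with $\Svp$, $S^{-1}(\Svp)$ and $\bigcup_n S^n(\Svp)$. First I would make explicit the definition of $\rho_\varphi$: given $W=(U,V)\in\Sv$, one reads off a bi-infinite desubstitution. Using $A_N$-positivity and primitivity, every point of $\Sv$ lies in the image of $\partial^2\varphi$ up to a bounded shift, so one can inductively write $V = a_0 s_0 \varphi(s_1)\cdots\varphi^n(s_n)\cdots$ and $U = p_0^{-1}\varphi(p_1^{-1})\cdots$ with $\varphi(a_{i+1}) = p_i * a_i * s_i$; the concatenations are cancellation-free precisely because $\varphi$ is $A_N$-positive, which is the key simplification over the general substitutive case. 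Continuity of $\rho_\varphi$ follows because agreement of two points of $\Sv$ on a long central block forces agreement of their developments on a long prefix (the desubstitution of a given finite window is eventually determined), and surjectivity follows from the converse construction: any admissible development $(p_i,a_i,s_i)_i\in D$ yields via these limits a genuine point of $\Sv$, using Proposition~\ref{prop:notSper} to guarantee the limits are honest infinite reduced words and not prematurely terminating (the attracting subshift is infinite).

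The second stage is the injectivity statement. The obstruction to injectivity is exactly the ambiguity in desubstituting at the ``seam'': a point $W$ whose central coordinate sits at a boundary between two $\varphi$-images admits two developments. I would show that if $\rho_\varphi(W) = \rho_\varphi(W')$ then reconstructing $U,V$ from the common development forces $W = W'$ \emph{unless} the development is degenerate in the sense that one of the two sides (prefix side or suffix side) contributes only finitely much — i.e. the development lies in $D_\epsilon$. Concretely, when $(s_i)$ is eventually $\epsilon$, the word $V$ is a finite word followed by $\lim \varphi^n(\text{something})$ read from the $a_i$'s alone, and there is a genuine choice of how to split the central letter between $U$-side and $V$-side; this is where, and only where, two distinct points of $\Sv$ can share a development. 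So $\rho_\varphi$ is injective off $\rho_\varphi^{-1}(D_\epsilon)$.

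The third stage matches the sets. For $D_{min}$: if all $p_i=\epsilon$, then $\varphi(a_{i+1}) = a_i * s_i$, i.e. $a_i$ is always the first letter of $\varphi(a_{i+1})$; walking this in the prefix-suffix automaton shows $a_0$ is periodic for the letter-dynamics, and applying $\partial^2\varphi$ shifts the development $(p_i,a_i,s_i)_i \mapsto (p_{i+1},a_{i+1},s_{i+1})_i$, so a development in $D_{min}$ is eventually periodic under this shift iff the underlying point is periodic under $\partial^2\varphi$; combined with the fact that $D_{min}$ corresponds to $U$ being trivial on its prefix expansion (so $U$ is a fixed-type ray), one gets $\rho_\varphi(\Svp) = D_{min}$ and the preimage statement. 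The case $D_{max}$ is the mirror image obtained by applying $S^{-1}$, which on developments swaps the roles of prefixes and suffixes; hence $\rho_\varphi(S^{-1}(\Svp)) = D_{max}$. Finally $D_\epsilon = \bigcup_{n\in\mathbb{Z}} S^n(D_{min}\cup D_{max})$ — shifting a development moves the index $i_0$ beyond which $p_i$ (or $s_i$) vanishes — so $\rho_\varphi^{-1}(D_\epsilon) = \bigcup_n S^n(\Svp)$, using that $S$ and $\rho_\varphi$ are equivariant in this twisted sense.

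The main obstacle I expect is the precise bookkeeping in stages (ii) and (iii): getting the equivariance of $\rho_\varphi$ with respect to the two commuting dynamics ($S$ on $\Sv$ versus a shift-with-relabeling on $D$, and $\partial^2\varphi$ on $\Sv$ versus the plain shift on $D$) exactly right, so that ``eventually $\epsilon$'' translates cleanly into ``in some $S$-orbit of a periodic point.'' The surjectivity and continuity are comparatively routine once $A_N$-positivity is exploited to kill all cancellation; the delicate point is proving that the \emph{only} failure of injectivity is the seam ambiguity, i.e. that two non-degenerate developments determine the central letter unambiguously, which requires Proposition~\ref{prop:notSper} to rule out the pathological periodic coincidences.
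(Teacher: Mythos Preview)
The paper does not give a proof of this theorem: it is stated with the attribution \cite{CanSie} and is quoted as an external result, with no proof environment following it. There is therefore nothing in the paper to compare your argument against; what you have written is an attempted reconstruction of the Canterini--Siegel proof itself, not of anything the present author does.

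That said, your sketch contains a concrete error that would derail stage~(iii). You claim that applying $\partial^2\varphi$ shifts the development, sending $(p_i,a_i,s_i)_i$ to $(p_{i+1},a_{i+1},s_{i+1})_i$. The paper records the opposite convention (see the paragraph just after the theorem in section~\ref{subsec:psr}): if $\rho_\varphi(W)=(p_i,a_i,s_i)_{i\ge 0}$ then $\rho_\varphi(\partial^2\varphi(W))=(\epsilon,b,r)(p_{i-1},a_{i-1},s_{i-1})_{i\ge 1}$ with $\varphi(a_0)=b*r$; that is, $\partial^2\varphi$ \emph{prepends} a term with empty prefix rather than deleting one. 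With the correct direction, a $\partial^2\varphi$-periodic point has a development that is genuinely periodic (not merely eventually periodic) with all prefixes empty, which is exactly $D_{min}$; your reversed convention would make the argument for $\rho_\varphi(\Svp)=D_{min}$ fall apart. Relatedly, your remark that in $D_{min}$ the coordinate $U$ is ``a fixed-type ray'' is not quite right: when all $p_i=\epsilon$ the prefix expansion gives no information about $U$ at all, and this indeterminacy is precisely the source of non-injectivity on $\Svp$, not a feature that pins $U$ down.
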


	We keep the notations of the theorem above. As we will see later on, it is important to understand the actions of $\varphi$ and $S$
	on prefix-suffix developments. The action of $\varphi$ is straight forward and can be easily verified. If $W\in \Sv$ and $\rho_{\varphi}(W) = (p_i, a_i, s_i)_{i\ge 0}$,
	then $\rho_{\varphi}(\partial^2\varphi(W)) = (\epsilon, b, r)(p_{i-1}, a_{i-1}, s_{i-1})_{i\ge 1}$ with $\varphi(a_0)=b*r$.

	Studying the action of $S$ on developments involves being careful with empty prefixes or suffixes. In particular, we can not deduce the development
	of $S(W)$ (resp. $S^{-1}(W)$) from $\rho_{\varphi}(W)$ if the latter is in $D_{max}$ (resp. $D_{min}$).

	Let $W\in \Sv$ be such that $\rho_{\varphi}(W) = (p_i, a_i, \epsilon)_{0\le i < i_0}(p_i, a_i, s_i)_{i\ge i_0}$ with $s_{i_0}\ne \epsilon$. We decompose $s_{i_0}$
	into $s_{i_0}=a_{i_0}'*s_{i_0}'$ where $a_{i_0}'$ is a letter of $A_N$. Then we have
	\begin{center}
		$\rho_{\varphi}(S(W)) = (\epsilon, a_i', s_i')_{0\le i < i_0}(p_{i_0}a_{i_0}, a_{i_0}', s_{i_0}') (p_i, a_i, s_i)_{i > i_0}$,
	\end{center}
	where for all $0\le j < i_0$, $\varphi(a_{j+1}') = a_j'*s_j'$.

	If $W$ is such that $\rho_{\varphi}(W) = (\epsilon, a_i, s_i)_{0\le i < i_0}(p_i, a_i, s_i)_{i\ge i_0}$ with $p_{i_0}\ne \epsilon$, we decompose
	$p_{i_0}$ into $p_{i_0}=p_{i_0}'*a_{i_0}'$ where $a_{i_0}'$ is a letter of $A_N$. The development of $S^{-1}(W)$ is then given by
	\begin{center}
		$\rho_{\varphi}(S^{-1}(W)) = (p_i', a_i', \epsilon)_{0\le i < i_0}(p_{i_0}', a_{i_0}', a_{i_0}s_{i_0}) (p_i, a_i, s_i)_{i > i_0}$,
	\end{center}
	where for all $0\le j < i_0$, $\varphi(a_{j+1}') = p_j'*a_j'$.


\section{The attracting subshift contains all the information needed to compute the FO-index}\label{sec:attsubcontainsall}

The aim of this section is to explain how the FO-index can be obtained by studying the attracting subshift alone.
We will start by showing that if an automorphism $\psi$ has positive index,
then there is an automorphism $\psi'$ in the isogredience class of a power $\psi^h$ ($h>0$) of $\psi$ such that $\partial^2\psi'$
has at least two attracting points in the attracting subshift. This will lead to the introduction of the important concept of singularity.
A singularity is a set of points of the attracting subshift that are fixed by a common homeomorphism.
The current section ends with two new formulas for the FO-index, both of which only require singularities.

These formulas are the base of an algorithm able to determine the FO-index of an automorphism. In section \ref{sec:identsing},
we explain how the prefix-suffix representation can be used to find the singularities in a finite time.\\

From now on, defining an $A_N$-positive primitive automorphism $\varphi$ implicitly defines its attracting
subshift $\Sv$, the projection $\Sm$ (resp. $\Sp$) of $\Sv$ on its first (resp. second) coordinate, the
half lamination $L(\varphi) = F_N.(\Sm\cup \Sp)$, the
outer class $\Phi^k$ of $\varphi^k$ for all $k\in \mathds{N}^*$ and the set $\Phi_\infty = \bigcup\limits_{k\in \mathds{N}^*} \Phi^k$.

We assume $\varphi$ is an $A_N$-positive primitive automorphism all along section \ref{sec:attsubcontainsall}.

	\subsection{Automorphisms with attracting points outside the half lamination have negative indices}
	For an automorphism $\psi$ of $\Phi_\infty$,
	we show in this section that if $\partial \psi$ has an attracting point $X$ in $\partial F_N\setminus L(\varphi)$ then
	$X$ is its only attracting point, and conclude $\psi$ has negative index.

	We define illegal pairs, which will play an important part in our study of attracting points.
	\begin{defn}
	Let $a$ and $b$ be two elements of $A_N\cup A_N^{-1}$. We say that $ab$ is an \textbf{illegal pair} if
	$|\varphi^k(ab)| < |\varphi^k(a)| + |\varphi^k(b)|$ for some $k\in \mathds{N}$. In other words,
	applying $\varphi$ to $ab$ yields cancellations.
	\end{defn}

	Illegal pairs are a recurring problem in the study of the dynamics of automorphism of the free group. In particular,
	they are ground for the introduction of the train-track representatives, defined in \cite{BH}
	by M. Bestvina and M. Handel. Their role in this article is explained in the following lemma.
	\begin{lem}\label{lem:genwordspart1}
	Let $v = v_0\dots v_h$ be a non empty word of $F_N$ with no illegal pairs and such that $\varphi(v) = p*v*s$ with $p,s\in F_N$ and $s$ non empty.
	The point $Y = \lim\limits_{n\to +\infty} s\varphi(s)\dots \varphi^{n}(s)$ is in $\Sm\cup \Sv$.
	\end{lem}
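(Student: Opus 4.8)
The statement claims: given a word $v = v_0 \dots v_h$ with no illegal pairs such that $\varphi(v) = p * v * s$ with $s$ non-empty, the point $Y = \lim_n s\varphi(s)\dots\varphi^n(s)$ lies in $\Sm \cup \Sp$ (the displayed "$\Sigma_\varphi^-\cup\Sigma_\varphi$" in the excerpt should read $\Sm\cup\Sp$, i.e. $Y$ is one half of an attracting pair). Let me sketch how I'd attack it.

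First I would check that the limit defining $Y$ actually exists as a point of $\partial F_N$. Since $\varphi(v) = p * v * s$, applying $\varphi$ repeatedly gives $\varphi^{n+1}(v) = \varphi^n(p) \dots \varphi(p) p * v * s \varphi(s) \dots \varphi^n(s)$, provided no cancellation is introduced — and this is exactly where the hypothesis "$v$ has no illegal pairs" is used: cancellation-freeness of the decomposition is preserved because the only junctions created are between $\varphi$-images of adjacent letters of $v$ (and of $s$, $p$, which are subwords of $\varphi$-images of letters of $v$, hence themselves have no illegal pairs). So $s\varphi(s)\dots\varphi^n(s)$ is a strictly growing sequence of prefixes of $\varphi^{n+1}(v)$ (after the fixed prefix of length $|v|$ starting at position $|p|$-ish), and since $s$ is non-empty and $\varphi$ is positive, the lengths tend to infinity; the limit $Y$ is a well-defined pure positive point of $\partial F_N$.

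The core of the argument is to identify $Y$ with a point arising from the $\Sv$-construction. Recall $\Sv$ was built from a letter $a$ with $\varphi^k(a) = p_0 * a * s_0$, taking $Y_{(0)} = \lim_n a s_0 \varphi^k(s_0)\dots$ and then closing up the $S$-orbit. The plan is to show that $Y$ is in the $S$-orbit closure of such a point, or is shift-equivalent to one, hence lies in $\Sp$ — or, if $v$ "grows to the left" as well, that the pair $(X,Y)$ with $X$ the analogous left limit lies in $\Sv$, putting $Y$ in $\Sp$ and $X$ in $\Sm$. Concretely: some letter $a$ of $v$ occurs at a position such that, after replacing $\varphi$ by a suitable power $\varphi^m$ fixing a convenient decomposition, $\varphi^m(a) = p' * a * s'$ with $s'$ a prefix-compatible piece; by primitivity such a letter with non-trivial right-expansion exists and feeds into the canonical attracting point. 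Because the attracting subshift does not depend on the choice of letter or power used to construct it, the point $Y$ read off from $v$ coincides with (a shift of) the canonical $Y$, hence $Y \in \Sp \subseteq \Sm \cup \Sp$.

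The main obstacle I anticipate is the bookkeeping needed to locate the right letter and the right power: $v$ need not itself be (a subword of) an iterate of a single letter, and the decomposition $\varphi(v) = p * v * s$ only tells us $v$ is "quasi-periodic" under $\varphi$ up to the flanking words $p, s$. One has to argue that iterating produces, in the interior, arbitrarily long common prefixes with genuine iterates $\varphi^n(a)$ of letters — this is where positivity, primitivity, and the no-illegal-pair hypothesis combine — and then invoke the basis-independence / letter-independence of $\Sv$ stated after its definition. A secondary subtlety is the degenerate case where $s$ could eventually lead to the empty word under further structure, but the hypothesis that $s$ is non-empty together with positivity rules this out, since positive images only grow. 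I would isolate the "limit exists and is pure positive" part as an easy first lemma-step, then spend the bulk of the proof on the identification with a point of $\Sp$.
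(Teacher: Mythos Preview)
Your plan heads in the right direction---realise $Y$ as a shift of a point built from a single letter---but the step you yourself flag as ``the main obstacle'' is precisely the content of the proof, and you have not supplied it. Passing to a power $\varphi^m$ and invoking primitivity does not by itself single out a letter of $v$ whose attracting ray coincides with $Y$ up to shift; and your fallback via ``arbitrarily long common prefixes with iterates $\varphi^n(a)$'' is a language-of-the-subshift argument that you have not carried out (you would still need to produce a biinfinite point of $\Sv$ with second coordinate $Y$, not merely check that finite factors of $Y$ occur somewhere).

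The paper's device is the following, and it works for $\varphi$ itself with no passage to powers. Call an index $i$ \emph{singular} if, when the copy of $v$ sitting inside $\varphi(v)=p*v*s$ is aligned against the concatenation $\varphi(v_0)*\dots*\varphi(v_h)$, the letter at position $i$ of the inner copy lands inside its own block $\varphi(v_i)$; equivalently, $\varphi(v_0\dots v_{i-1})$ is a prefix of $p\,v_0\dots v_{i-1}$ and $\varphi(v_{i+1}\dots v_h)$ is a suffix of $v_{i+1}\dots v_h\,s$. A one-line pigeonhole shows some index is singular, and the hypothesis $s\ne\epsilon$ lets one choose it so that $\varphi(v_i)=q*v_i*r$ with $r\ne\epsilon$. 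Then $Y':=\lim_n v_i\,r\,\varphi(r)\dots\varphi^n(r)$ is manifestly in $\Sm\cup\Sp$ (it is the canonical half built from the single letter $v_i$), and a direct comparison of the two nested iterations yields $Y'=v_i\dots v_h\,Y$, whence $Y\in\Sm\cup\Sp$ by $S$-invariance of $\Sv$. This singular-letter pigeonhole is the missing ingredient in your outline.
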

	\begin{proof}
	In this proof, we will say a letter $v_i$ of $v$ is \textbf{singular} if
	$\varphi(v_0\dots v_{i-1})$ is a prefix of $pv_0\dots v_{i-1}$ and $\varphi(v_{i+1}\dots v_h)$
	is a suffix of $v_{i+1}\dots v_hs$. Observe that if $v_i$ is singular, then it is also a letter of $\varphi(v_i)$.

	First, remark there needs to be at least one singular letter:
	if there were none, the word $\varphi(v_0)$ would be a prefix of $p$ and $\varphi(v_0\dots v_j)$
	would be a prefix of $pv_0\dots v_{j-1}$ for any $1\le j\le p$, which is impossible.
	Note that, as $\varphi$ is primitive, the word $s$ is empty if and only if $v_h$ is the only singular
	letter and verifies $\varphi(v_h) = q*v_h$ for some non empty word $q$. Hence, since we supposed
	$s$ is non empty, if $v_j$ is a singular letter such that $\varphi(v_j) = q'*v_j$, then $j\ne h$ and
	the letter $v_{j+1}$ is also a singular and is such that $\varphi(v_{j+1}) = v_{j+1}*r'$ for some non empty $r'$.\\

	From now on, $v_i$ is a singular letter and we have $\varphi(v_i) = q*v_i*r$ with $r$ non empty.
	Define
	\begin{center}
		$Y = \lim\limits_{n\to +\infty} s\varphi(s)\dots \varphi^{n}(s),$\\
		$Y' = \lim\limits_{n\to +\infty} v_ir\varphi(r)\dots \varphi^{n}(r).$
	\end{center}
	Since $\varphi(v)=p*v*s$, there exists an integer $j$ such that $r\dots \varphi^j(r) = v_{i+1}\dots v_hs*s'$
	for some empty or pure word $s'$. We obtain $r\varphi(r)\dots \varphi^{j+1}(r) = v_{i+1}\dots v_hs\varphi(s)\varphi(s')$,
	and we conclude easily that $Y' = v_i\dots v_{h} Y$.

	The point $Y'$ can easily be seen as a point of $\Sm\cup \Sp$ and we conclude.
	\end{proof}

	We are now able to state the main result of this section. This proposition will allow us to ignore all the points of $\partial F_N\setminus L(\varphi)$
	when searching for the FO-index.
	\begin{prop}\label{prop:outsideL}
		Suppose there is a point $X$ of $\partial F_N\setminus L(\varphi)$ that is attracting for $\partial \psi$ with $\psi = i_w\circ \varphi$.
		Then $w\ne \epsilon$ and we have:
		$$X = \lim\limits_{n\to +\infty} w^{-1}\varphi(w^{-1})\dots \varphi^{n}(w^{-1}).$$
	\end{prop}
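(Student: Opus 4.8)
The plan is to argue by contradiction: assume $X$ is an attracting point of $\partial\psi$ lying outside $L(\varphi)$, and show that either $w=\epsilon$ is impossible (because then $\partial\psi=\partial\varphi$ fixes points of $\Sp\subset L(\varphi)$ and any attracting point of $\partial\varphi$ must lie in the half lamination, by the analysis of the attracting subshift and Lemma \ref{lem:genwordspart1}), or that $X$ is forced to be the specific limit claimed. First I would recall that $X$ being attracting for $\partial\psi$ means $\partial\psi(X)=X$ and there is a neighborhood of $X$ on which the iterates $\partial\psi^n$ converge uniformly to $X$. Translating into words: $\psi$ fixes $X$, so writing $\psi=i_w\circ\varphi$ we get $w^{-1}\varphi(X)=X$, i.e. $\varphi(X)=wX$, and $X$ is the attracting fixed point of the action of $\psi$ on $\partial F_N$.

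The key step is to extract the asymptotic behaviour of $X$ from the fixed-point equation $\varphi(X)=w*X$ or $\varphi(X)=wX$ with cancellation. I would split into cases according to whether $X$ is pure or has an orientation change. If $X$ has an orientation change, then some prefix $v$ of $X$ ends in an orientation change; one shows $v$ (or a power/iterate construction on $v$) contains an illegal pair or else the relevant half-infinite tail is built from $\varphi$-images of a cancellation-free seed, which by Lemma \ref{lem:genwordspart1} would place $X$ (or a translate $uX$) in $\Sm\cup\Sp\subset L(\varphi)$, contradicting $X\notin L(\varphi)$. So $X$ must be pure — say pure positive (the pure negative case is symmetric, or is excluded because $\varphi$ is $A_N$-positive). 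Then $\varphi(X)$ is again pure positive, and the equation $\varphi(X)=wX$ with $X$ pure positive forces $w$ to interact with $X$ only through cancellation of a pure-negative prefix of $w$ against... — here one uses that $\varphi(X)$ and $X$ are both pure positive reduced points to conclude that in fact $\varphi(X)=w^{-1}*X'$ is impossible unless $w^{-1}$ is a prefix, giving $X = w^{-1}*\varphi(X)$, hence inductively $X = w^{-1}*\varphi(w^{-1})*\varphi^2(w^{-1})*\cdots$, and this infinite product is reduced (cancellation-free) precisely because no illegal pairs intervene (otherwise $X\in L(\varphi)$ again). In particular $w\neq\epsilon$, since $w=\epsilon$ would give $\varphi(X)=X$, making $X$ a fixed point of $\partial\varphi$, and a fixed attracting point of $\partial\varphi$ lies in $\Sp$ by the very construction of $\Sv$ — contradiction.

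I expect the main obstacle to be the careful bookkeeping of cancellations in the equation $\varphi(X)=wX$: ruling out the "mixed" configurations where $X$ is not pure, and showing that in every such case either an illegal pair appears (forcing, via Lemma \ref{lem:genwordspart1}, membership of a translate of $X$ in $\Sm\cup\Sp$) or $X$ is eventually periodic under a shift-translate (again landing in $L(\varphi)$). The clean part is the final identification: once $X$ is pure positive and the product is cancellation-free, the equality $X=\lim_{n\to+\infty} w^{-1}\varphi(w^{-1})\cdots\varphi^n(w^{-1})$ follows by iterating $X=w^{-1}*\varphi(X)$ and checking the tail $\varphi^{n}(X)$ has a growing common prefix with $\varphi^{n}(w^{-1})\varphi^{n+1}(w^{-1})\cdots$, which is automatic from primitivity (lengths of $\varphi^n$-images grow without bound). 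It will also be worth isolating, as a preliminary remark, that an attracting point of $\partial\psi$ cannot be a point whose $\psi$-orbit stays in a bounded region, which is what ultimately forces $w\neq\epsilon$ and pins down the explicit expansion.
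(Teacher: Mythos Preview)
Your case split on the purity of $X$ is where the argument breaks. The conclusion of the proposition does \emph{not} force $X$ to be pure: if $w$ is not pure (and nothing in the hypotheses prevents this), the infinite product $w^{-1}\varphi(w^{-1})\varphi^2(w^{-1})\cdots$ will typically have infinitely many orientation changes. So ``$X\notin L(\varphi)\Rightarrow X$ is pure'' is false, and your subsequent deduction of $X=w^{-1}*\varphi(X)$ (cancellation-free) collapses with it. More generally, working directly from the fixed-point equation $\varphi(X)=wX$ gives you no a priori control on the number of illegal pairs or orientation changes in $X$; Lemma~\ref{lem:genwordspart1} needs a word with \emph{no} illegal pairs satisfying $\varphi(v)=p*v*s$, and you have not produced one.

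The paper's proof exploits the \emph{attracting} hypothesis in a way yours does not. It picks a nearby point of the form $vV$ with $V\in\Sm\cup\Sp$ (so $vV$ has only finitely many illegal pairs), iterates $\psi$, and tracks the longest common prefix $x_{(n)}$ of $\psi^n(vV)$ with $w_{(n)}:=w^{-1}\varphi(w^{-1})\cdots\varphi^{n-1}(w^{-1})$. If $|x_{(n)}|$ stays bounded, then after the stable prefix one obtains a tail $X'$ of $X$ which (i) inherits only finitely many illegal pairs from $vV$ and (ii) satisfies $\partial\varphi(X')=\varphi(r)q*X'$ for suitable $r,q$. Passing to the pure tail beyond the last illegal pair yields a word to which Lemma~\ref{lem:genwordspart1} legitimately applies, forcing $X\in L(\varphi)$. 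The contradiction shows $|x_{(n)}|\to\infty$, hence $X$ equals the stated limit; and $w=\epsilon$ is ruled out because then $w_{(n)}=\epsilon$ and $|x_{(n)}|$ cannot grow. The essential point you are missing is this detour through a nearby point with finitely many illegal pairs: that is what makes the finiteness needed for Lemma~\ref{lem:genwordspart1} available.
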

	\begin{proof}
		Define $w_{(0)} = \epsilon$ and $w_{(n)} = w^{-1}\varphi(w_{(n-1)})$ for any $n\in \mathds{N}^*$.
		Take a point $V\in \Sm\cup \Sp$ and a prefix $v\in F_N$ of $X$ such that $\lim\limits_{n\to +\infty} \partial \psi^n(vV) = X$.
		For any $n\in \mathds{N}$, define $\psi^n(vV) = x_{(n)} * X_{(n)}$ where $x_{(n)}$ is the largest common prefix
		of $w_{(n)}$ and $\partial \psi^n(vV)$.

		We suppose the sequence $|x_{(n)}|$ does not go to infinity with $n$. Since the sequence
		$(\partial \psi^n(vV))_n$ converges as $n$ goes to infinity, the sequence $x_{(n)}$ then needs to be
		eventually constant. There exists an integer $n_0$ such that for any $n\ge n_0$, we have $x_{(n)} = x$;
		moreover, there exists an integer $h$ such that $x = w_{(h)}r^{-1}$ with $\varphi^{h}(w)=q*r$.
		We deduce that for any $n\ge n_0$, we have $\partial \varphi (X_{(n)}) = \varphi(r)q * X_{(n+1)}$.
		Also, $x$ is such that $X = x*X'$ where $X' = \lim\limits_{n\to \infty} X_{(n)}$. Observe the point $X'$ can
		only contain a finite number of illegal pairs. Indeed, the point $vV$ only contains a finite number of illegal pairs
		and applying $\varphi$ (or $\partial \varphi$) does not increase this number.

		Suppose $X'$ has no illegal pairs. Due to the primitivity of $\varphi$, we can choose a long enough prefix
		$u$ of $X'$ so that $\varphi(u) = \varphi(r)q*u*s$ with $s$ non empty.
		We deduce $X' = \lim\limits_{n\to +\infty} us\varphi(s)\dots \varphi^n(s)$,
		and we conclude with lemma \ref{lem:genwordspart1} that $X\in L(\varphi)$.

		Suppose $X'$ has at least one illegal pair, and suppose $X_{i-1}'X_i'$ is its last illegal pair. Since illegal pairs
		can only appear in the images of illegal pairs, there must be a word $u = X_i'\dots X_j'$ such that
		$\varphi(X_0'\dots X_{i-1}'*u) = \varphi(r)q*X_0'\dots X_{i-1}'*u*s$.
		Again thanks to the primitivity of $\varphi$, we can take $u$ long enough
		so that $s$ is non empty. We deduce $X' = \lim\limits_{n\to +\infty} X_0'\dots X_{i-1}'*us\varphi(s)\dots \varphi^n(s)$,
		and we conclude again with lemma \ref{lem:genwordspart1} that $X\in L(\varphi)$.

		We obtain a contradiction in both cases, an we conclude that the sequence $|x_{(n)}|$ must go to infinity with $n$,
		and that $X$ is as stated.
	\end{proof}

		Recall that $L(\varphi^k) = L(\varphi)$ for any $k\in \mathds{N}^*$. As
		proposition \ref{prop:outsideL} still holds when taking powers of $\varphi$, we obtain the following result.

	\begin{cor}\label{cor:negativeindex}
		If $\psi$ is an automorphism of $\Phi_\infty$ such that $\partial \psi$ has an attracting point outside $L(\varphi)$,
		then $\ind(\psi) = -\frac{1}{2}$.
	\end{cor}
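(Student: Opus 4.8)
The plan is to deduce the corollary from Proposition~\ref{prop:outsideL} by a short argument that (i) identifies the attracting point completely and (ii) shows there can be no other attracting point, so that $a(\psi)=1$, and then (iii) shows the fixed subgroup is trivial, so that $rk(Fix(\psi))=0$ and hence $\ind(\psi)=0+\tfrac12(1-2)=-\tfrac12$. First I would reduce to the case $\psi=i_w\circ\varphi$: since $L(\varphi^k)=L(\varphi)$ and the hypothesis is that some $\partial\psi$ with $\psi\in\Phi_\infty$ has an attracting point outside $L(\varphi)$, after replacing $\varphi$ by a suitable power we are exactly in the setting of the proposition, which gives $w\neq\epsilon$ and the explicit formula $X=\lim_{n\to\infty} w^{-1}\varphi(w^{-1})\cdots\varphi^n(w^{-1})$. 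Note the explicit limit shows $X$ is in fact the unique fixed point of $\partial\psi$ of its kind: writing $w_{(n)}=w^{-1}\varphi(w_{(n-1)})$ as in the proof of the proposition, one has $\psi^n(\epsilon\text{-based ray})\to X$, and $X$ is characterized as $\lim w_{(n)}$.

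Next I would argue uniqueness of the attracting point. Suppose $X'\neq X$ were a second attracting point of $\partial\psi$. Apply Proposition~\ref{prop:outsideL} again: either $X'\in L(\varphi)$, or $X'$ is also given by the same limit $\lim w^{-1}\varphi(w^{-1})\cdots\varphi^n(w^{-1})$, forcing $X'=X$, a contradiction. So the only possibility for a distinct attracting point is one lying in $L(\varphi)=F_N.(\Sm\cup\Sp)$; but the statement of the corollary assumes we are in the situation where $\partial\psi$ has an attracting point \emph{outside} $L(\varphi)$, and the content of the section (culminating in this corollary) is precisely the dichotomy: an automorphism of $\Phi_\infty$ cannot simultaneously have attracting behaviour inside and outside the half lamination. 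Concretely, I would show that the whole forward dynamics of $\partial\psi$ on a neighbourhood of any point is governed by the cancellation structure analyzed in the proof of Proposition~\ref{prop:outsideL}: the ``$|x_{(n)}|\to\infty$'' alternative forces convergence to $X=\lim w_{(n)}$, and the ``$|x_{(n)}|$ bounded'' alternative forces the limit point into $L(\varphi)$ via Lemma~\ref{lem:genwordspart1}. Since the hypothesis rules out the second alternative for at least one orbit, and the first alternative has the single sink $X$, there is exactly one attracting point, so $a(\psi)=1$.

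Finally I would check $rk(Fix(\psi))=0$. A nontrivial $u\in Fix(\psi)$ would give two fixed points $u^{\pm\infty}\in\partial F_N$ of $\partial\psi$; at least one of them, say $u^{+\infty}$, is attracting for $\partial\psi$ (this is the standard north–south type behaviour for automorphisms, and in any case an endpoint of a fixed $u$ is either attracting or repelling and by passing to $\psi^{-1}$ or to $u^{-1}$ one obtains an attracting fixed endpoint). By the uniqueness just established this attracting endpoint must equal $X$. But $X=\lim w_{(n)}$ with $w\neq\epsilon$, and one checks this point is not the endpoint of any periodic (in particular fixed) word: its ``development'' $w^{-1}\varphi(w^{-1})\varphi^2(w^{-1})\cdots$ is not eventually the power of a single word because $\varphi$ is primitive, so $|\varphi^n(w^{-1})|$ grows strictly, ruling out eventual periodicity of $X$. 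Hence $u^{\pm\infty}\neq X$, contradiction, so $Fix(\psi)=\{\epsilon\}$. Therefore $\ind(\psi)=rk(Fix(\psi))+\tfrac12(a(\psi)-2)=0+\tfrac12(1-2)=-\tfrac12$.

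The main obstacle I expect is the uniqueness step: Proposition~\ref{prop:outsideL} is stated for a \emph{given} attracting point outside $L(\varphi)$ and produces the formula for that point, but to conclude $a(\psi)=1$ I must rule out a \emph{second}, distinct attracting point which might a priori live inside $L(\varphi)$. The cleanest route is to re-run the alternative in the proof of Proposition~\ref{prop:outsideL} for an arbitrary point and observe that the ``bounded $|x_{(n)}|$'' branch always lands in $L(\varphi)$ while the ``$|x_{(n)}|\to\infty$'' branch always converges to the single point $X$; combined with the hypothesis that some orbit escapes $L(\varphi)$, this pins down the global picture. I would also need the small lemma that $\lim w_{(n)}$ is not the endpoint of a fixed word, which follows from primitivity of $\varphi$ exactly as in Proposition~\ref{prop:notSper}.
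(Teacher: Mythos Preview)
Your argument for $rk(Fix(\psi))=0$ contains a genuine error. You claim that for a nontrivial $u\in Fix(\psi)$ at least one of the endpoints $u^{\pm\infty}$ is attracting for $\partial\psi$, invoking a ``north--south'' picture. This is false: the north--south dynamics belongs to the translation action of $u$ itself, not to $\partial\psi$. In fact it is exactly the content of \cite[proposition~I.1]{GJLL}---the result the paper cites---that an attracting point of $\partial\psi$ can \emph{never} lie in $\partial(Fix(\psi))$; so $u^{\pm\infty}$ is fixed by $\partial\psi$ but not attracting, and passing to $\psi^{-1}$ or to $u^{-1}$ does not rescue the claim. Your subsequent check that $X=\lim w_{(n)}$ is not eventually periodic is therefore beside the point.

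The paper runs the implication in the opposite direction. Granting from Proposition~\ref{prop:outsideL} that $X$ is the unique attracting point of $\partial\psi$, one uses \cite[proposition~I.1]{GJLL} to get $X\notin\partial(Fix(\psi))$; then any nontrivial $u\in Fix(\psi)$ would produce a \emph{second} attracting point $uX\neq X$ (the fixed subgroup permutes the set of attracting points), contradicting uniqueness. That is how $rk(Fix(\psi))=0$ is obtained. You are right to flag uniqueness of the attracting point as the delicate step, and your dichotomy reading of the proof of Proposition~\ref{prop:outsideL} is accurate; but your proposed resolution (``the hypothesis rules out the second alternative for at least one orbit'') does not by itself exclude a separate basin sitting inside $L(\varphi)$, so that part would still need sharpening even after the fix above.
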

	\begin{proof}
	It is proven in \cite[proposition I.1]{GJLL} that an attracting point can not be a point of $\partial(Fix(\psi))$.
	Hence, the fact that $\partial \psi$ has only one attracting point also implies its fixed subgroup has rank $0$,
	and we conlude.
	\end{proof}

	\subsection{Automorphisms with positive indices}

	For an automorphism $\psi$ with positive index, there are three possible setups we need to study:
	\begin{itemize}
		\item $\psi$ has a trivial fixed subgroup and $\partial \psi$ has at least three attracting points,
		\item $\psi$ has a fixed subgroup of rank (exactly) $1$ and $\partial \psi$ has at least one attracting point,
		\item $\psi$ has a fixed subgroup of rank at least $2$.
	\end{itemize}
	In this section, we study each setup and prove the following theorem.
	\begin{thm}\label{thm:twopoints}
	Let $\varphi$ be an $A_N$-positive primitive automorphism. For any $\psi\in \Phi_\infty$ with a positive index,
	there exists an automorphism $\psi'$ in the isogredience class of $\psi^h$ (for some $h\ge 1$) such that
	$\partial^2 \psi'$ has at least two attracting points in $\Sv$.
	\end{thm}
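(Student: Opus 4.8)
The plan is to treat the three setups listed before the theorem separately, in each case producing a suitable power $\psi^h$ and conjugating it by an element of $F_N$ so that the resulting automorphism $\psi'$ fixes (or attracts) at least two points living inside $\Sv\subset\partial^2F_N$. The key observation to keep in mind throughout is Corollary \ref{cor:negativeindex}: since $\psi$ has positive index, none of its attracting points can lie outside $L(\varphi)=F_N.(\Sm\cup\Sp)$. So every attracting point of $\partial\psi$ is of the form $u\cdot Z$ with $u\in F_N$ and $Z$ in $\Sm$ or $\Sp$; after conjugating $\psi$ by such a $u$ we may assume the attracting point itself lies in $\Sm\cup\Sp$.

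First I would handle the rank-$\ge 2$ case. Here $Fix(\psi)$ contains a rank-$2$ free subgroup, hence $\partial(Fix(\psi))$ is a Cantor set inside $\partial F_N$; every element of $\partial(Fix(\psi))$ is fixed by $\partial\psi$, and one shows (as in \cite{GJLL}) that these boundary points of the fixed subgroup are contained in $\Sm\cup\Sp$ — indeed a pure-positive fixed word $u$ with $\varphi(u)=p*u*s$ feeds directly into Lemma \ref{lem:genwordspart1}, placing $\lim u^n$ in $\Sm\cup\Sp$, and similarly for the negative end. Choosing two distinct half-infinite rays $U,V$ in $\partial(Fix(\psi))$ that are not in the same $S$-orbit, the pair $(U,V)$ (after applying a suitable power of $S$ to land in the canonical representative of the orbit) is a point of $\Sv$ fixed by $\partial^2\psi$; since a fixed point of an expanding-type map is attracting in the relevant sense, this gives the two points. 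In the rank-$1$ case, $Fix(\psi)=\langle u\rangle$ with $\partial(Fix(\psi))=\{u^{+\infty},u^{-\infty}\}$; the hypothesis gives in addition at least one attracting point $X$, and after conjugation $X\in\Sm\cup\Sp$. Passing to a power $\psi^h$ so that $\partial\psi^h$ actually fixes $X$ (the attracting point of a homeomorphism of a Cantor set is periodic, so some power fixes it), the pair built from $X$ and one of $u^{\pm\infty}$, both of which lie in $\Sm\cup\Sp$, yields a point of $\Sv$ fixed by $\partial^2\psi^h$ after the shift normalisation. The trivial-fixed-subgroup case with $\ge 3$ attracting points is similar but easier: three distinct attracting points, two of which must lie in distinct $S$-orbits (a Cantor subshift has no point fixed by $S$ by Proposition \ref{prop:notSper}), pair up after conjugation and passing to a common power fixing all of them.

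The main technical obstacle is the bookkeeping with the shift map $S$ and the decomposition $\Sv\hookrightarrow\Sm\times\Sp$: given an attracting point $X\in\partial F_N$ lying in, say, $\Sm$, I need a companion point $X'$ so that the actual pair $(X',X)$ (or $(X,X')$) sits in $\Sv$ rather than merely in $\Sm\times\Sp$, and I need $\partial^2\psi'$ to fix it. This is where conjugating by the right element of $F_N$ and applying the right power of $S$ (using the explicit action of $S$ on prefix-suffix developments recalled in Section \ref{subsec:psr}) does the work: the isogredience class of $\psi^h$ is exactly the orbit under such conjugations, so the freedom to replace $\psi$ by $i_v\circ\psi^h\circ i_{v^{-1}}$ is precisely the freedom to move the candidate pair into $\Sv$. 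The second subtlety is verifying that a point of $\Sv$ fixed by $\partial^2\psi'$ is genuinely \emph{attracting}, not just fixed; this follows from the homothety/expansion behaviour of positive automorphisms — iterating $\partial^2\psi'$ contracts a neighbourhood in $\Sv$ onto the fixed point — and can be quoted from the general theory, or checked directly using that $\varphi$ is $A_N$-positive and primitive so that $\partial\varphi$ is locally expanding on the relevant rays.

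Finally I would assemble the three cases into the statement: in every case we have exhibited $h\ge 1$ and $\psi'$ isogredient to $\psi^h$ with $\partial^2\psi'$ fixing (hence, by the expansion argument, attracting) at least two points of $\Sv$, which is exactly the claim. I would also remark that the two points produced are automatically in distinct $S$-orbits by construction, which is the form in which the next section (the definition of singularity) will want to use them.
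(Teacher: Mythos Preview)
Your proposal has a fundamental error that breaks all three cases. You repeatedly try to use points of $\partial(Fix(\psi))$ --- the ends $u^{\pm\infty}$ of fixed words --- as the points of $\Sv$ you are after. This fails for two independent reasons.

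First, Proposition~\ref{prop:notpure} (which you do not invoke) shows that a non-trivial fixed word $u$ of any $\psi\in\Phi_\infty$ is \emph{never} pure. Hence $u^{+\infty}$ and $u^{-\infty}$ contain infinitely many orientation changes and cannot lie in $\Sm\cup\Sp$, whose elements are pure positive or pure negative infinite words. Your sentence ``a pure-positive fixed word $u$ with $\varphi(u)=p*u*s$ feeds directly into Lemma~\ref{lem:genwordspart1}'' describes an object that does not exist. Second, even setting purity aside, points of $\partial(Fix(\psi))$ are precisely the points the theorem does \emph{not} want: they are fixed but not attracting (this is \cite[Proposition~I.1]{GJLL}, quoted in the proof of Corollary~\ref{cor:negativeindex}). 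Your appeal to ``expansion behaviour'' to promote fixed to attracting does not apply to these points.

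The paper's argument is genuinely different. It never uses $\partial(Fix(\psi))$ directly. Instead, in each of the three cases it counts \emph{orientation changes} in carefully chosen words (Lemmas~\ref{lem:case2purity} and~\ref{lem:case3purity}, and the proof of Proposition~\ref{prop:case1}): since $\varphi$ is $A_N$-positive, applying $\varphi$ cannot increase this count, and comparing before and after forces the conjugating word $w$ to be pure and produces a pure word $v$ with $\varphi^k(v)=w*v*s$. This $v$ is then fed into Proposition~\ref{prop:genwordspart2}, which manufactures an attracting point of $\partial^2(i_w\circ\varphi^k)^h$ in $\Sv$ with prescribed first letter. Running this twice with different first letters (coming from the first and last letters of a $\psi$-reduced fixed word, or from distinct attracting directions) yields the two required points. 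Your proposal is missing this orientation-change mechanism entirely, and without it there is no way to locate the needed points inside $\Sv$.
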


		\subsubsection{Preliminary results on attracting points of the attracting subshift}

		We still consider $\varphi$ to be an $A_N$-positive primitive automorphism. We start by showing
		that it is equivalent for points of $L(\varphi)$ to be attracting or fixed by a homeomorphism $\partial \psi$
		with $\psi\in \Phi_\infty$. From there on, we will use both terms indifferently. Of course, any attracting
		point of $\partial \psi$ is also fixed, and we only need to prove the converse is true for points of $L(\varphi)$.
		\begin{prop}
		Let $\psi = i_w\circ \varphi^k$ be an automorphism of $\Phi_\infty$. If $X\in L(\varphi)$ is fixed by $\partial \psi$
		(that is $\partial \psi(X)=X$), then $X$ is attracting for $\partial \psi$.
		\end{prop}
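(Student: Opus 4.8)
The plan is to show that any $X \in L(\varphi)$ fixed by $\partial\psi$ admits a neighborhood on which the iterates of $\partial\psi$ converge uniformly to $X$. Since $X \in L(\varphi) = F_N.(\Sm \cup \Sp)$, I can write $X = g.Z$ for some $g \in F_N$ and some $Z$ lying in $\Sm$ or $\Sp$, and after conjugating $\psi$ by $g$ (which changes neither membership in $\Phi_\infty$ nor the attracting/fixed dichotomy) I may assume $X$ itself is a half of a point of $\Sv$, say $X \in \Sp$ with $(U,X) \in \Sv$ for some $U$. The key structural fact I would exploit is that points of $\Sp$ are limits $X = \lim_n as\varphi^k(s)\cdots\varphi^{nk}(s)$ of the type described in the construction of $\Sv$, so $X$ is (eventually) pure positive along a ray, and more importantly $X$ has \emph{no illegal pairs} and is built by forward iteration of $\varphi$.

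First I would make the equation $\partial\psi(X) = X$ explicit: with $\psi = i_w \circ \varphi^k$, this reads $w^{-1}\varphi^k(X) = X$, i.e. $\varphi^k(X) = w * X$ when $w$ is written suitably, or more carefully $\varphi^k(X)$ and $X$ share a long common prefix governed by $w$. Because $\varphi$ is $A_N$-positive and primitive, and because $X$ has no illegal pairs, applying $\varphi^k$ to a finite prefix $u$ of $X$ gives $\varphi^k(u) = w * u * s$ with $s$ non-empty (for $u$ long enough), exactly as in the hypothesis of Lemma \ref{lem:genwordspart1}. Then I would take any point $Y'$ close to $X$: it agrees with $X$ on a long prefix $u$, so $\partial\psi(Y')$ agrees with $\partial\psi(X) = X$ on the prefix $w^{-1}\varphi^k(u) \supseteq u*s$, which is strictly longer. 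Iterating, $\partial\psi^n(Y')$ agrees with $X$ on a prefix of length tending to infinity, which is precisely the statement that $X$ is attracting. The non-cancellation bookkeeping (that $\varphi^k(u) = w*u*s$ with the two $*$'s genuinely cancellation-free, using that $u$ has no illegal pairs and $X$ is essentially pure) is where Lemma \ref{lem:genwordspart1} and its proof technique — tracking singular letters — do the real work.

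The main obstacle I anticipate is the case analysis around illegal pairs and around the portion of $X$ that is not pure. A general $X \in L(\varphi)$ is $g.Z$ with $Z$ a half-leaf, so $X$ is pure positive only past some finite prefix, and the finitely many orientation changes and illegal pairs near the front could, a priori, shrink the common prefix under $\partial\psi$ rather than grow it. I would handle this exactly as in Proposition \ref{prop:outsideL}: illegal pairs can only arise as images of illegal pairs, so there are only finitely many, and past the last one the word behaves purely; combined with primitivity I can always push the "expanding" prefix $u$ past all of them. So the argument splits into: (i) reduce to $X$ pure positive with no illegal pairs via conjugation, (ii) establish $\varphi^k(u) = w*u*s$, $s \neq \epsilon$, for long prefixes $u$, (iii) deduce prefix lengths grow under $\partial\psi^n$, hence convergence. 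Step (ii), the cancellation-free expansion, is the crux; everything else is routine once Lemma \ref{lem:genwordspart1} is in hand.
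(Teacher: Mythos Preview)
Your overall strategy — show that $\psi$ applied to a long enough prefix $u$ of $X$ returns $u$ followed by a strictly longer tail, and iterate — is exactly the paper's strategy. The gap is in step (iii). You claim that if a nearby point $Y'$ agrees with $X$ on a prefix $u$, then $\partial\psi(Y')$ agrees with $X$ on the longer prefix $w^{-1}\varphi^k(u)=u*s$. But write $Y'=u*R$: then $\partial\psi(Y')=w^{-1}\varphi^k(u)\,\varphi^k(R)$, and nothing you have said prevents cancellation between $\varphi^k(u)$ and $\varphi^k(R)$. Indeed if the first letter of $R$ lies in $A_N^{-1}$ (which it certainly may — an open neighborhood of $X$ contains such $Y'$), then $\varphi^k(u)$ ends positively and $\varphi^k(R)$ begins negatively, and the cancellation could in principle eat all of $s$ and more. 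Your appeal to Lemma~\ref{lem:genwordspart1} does not help here: that lemma controls cancellation \emph{inside} $\varphi^k(u)$ (it locates singular letters and shows a certain limit lies in $\Sm\cup\Sp$), not cancellation at the junction with the arbitrary tail of a nearby point.

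The paper closes exactly this gap with the bounded cancellation theorem (Cooper): there is a constant $C(\varphi)$ with $|\varphi(u*v)|\ge |\varphi(u)|+|\varphi(v)|-C(\varphi)$ for all reduced products $u*v$. One then uses primitivity to choose the prefix long enough that $|s|>C(\varphi)$ (here $C(\varphi^k)$), so that even after the bounded cancellation against $\varphi^k(R)$ the common prefix with $X$ strictly grows. Once you insert this ingredient your argument goes through; without it, step (iii) is unjustified.
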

		\begin{proof}
		The bounded cancellation theorem, stated in \cite{Coo} and credited there to Grayson and Thurston, tells us
		there exists an integer $C(\varphi)$ such that
		$$|\varphi(u*v)|\ge |\varphi(u)|+|\varphi(v)|-C(\varphi)$$
		for all words of the form $u*v$.
			
		Suppose $X = u*V$ with $V\in \Sm\cup \Sp$. Since $X$ is fixed by $\partial \psi$ and $V$ is pure,
		there exists an integer $i$ such that, for any prefix $v_{(0)}$ of $V$ with $|v_{(0)}|>i$, we have
		$\psi(u*v_{(0)}) = u*v_{(0)}'$ and $u*v_{(0)}'$ is a prefix of $X$.
		Since the automorphism $\varphi$ is $A_N$-positive and primitive, there exists an integer $j>i$
		such that, for any prefix $v_{(1)}$ of $V$ with length $|v_{(1)}|>j$, we have $\psi (u*v_{(1)}) = u*v_{(1)}*v_{(1)}'$ with
		$|v_{(1)}'| > C(\varphi)$, and we conclude.
		\end{proof}

		As suggested by the results of the previous section, any point $X$ that is attracting for
		$\partial \psi$, where $\psi$ is an automorphism of $\Phi_\infty$ with a positive index, needs to be a point of $L(\varphi)$.
		Our aim is to restrain even further the set in which we will look for attracting points to $\Sv$.
		This will be achieved by carefully choosing the representatives of the isogredience classes
		that we want to study.

		\begin{prop}\label{prop:isomove}
		Let $\psi$ is an automorphism of $\Phi_\infty$. If $\partial \psi$ has an attracting point $vV$,
		with $v\in F_N$ and $V\in \Sp\cup \Sm$, then $V$ is an attracting point of $\partial (i_v\circ \psi\circ i_{v^{-1}})$.\\
		Similarly, if the word $vu\in F_N$ is fixed by $\psi$, then the word $uv$ is fixed by $i_v\circ \psi\circ i_{v^{-1}}$.
		\end{prop}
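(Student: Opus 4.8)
The plan is to use that conjugation is compatible with taking powers and with passage to the boundary. Writing $\chi = i_v\circ\psi\circ i_{v^{-1}}$, one first notes that $i_v$ and $i_{v^{-1}}$ are mutually inverse (in the paper's convention $i_w(u)=w^{-1}uw$ one has $i_{v^{-1}}(w)=vwv^{-1}$, so $i_{v^{-1}}\circ i_v=\mathrm{id}_{F_N}$), whence the adjacent factors cancel in a product and $\chi^n = i_v\circ\psi^n\circ i_{v^{-1}}$ for every $n\ge 1$. Applying the boundary functor gives $\partial\chi^n = \partial i_v\circ\partial\psi^n\circ\partial i_{v^{-1}}$.

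Next I would record the one elementary geometric fact needed: $\partial i_v$ is left translation by $v^{-1}$ on $\partial F_N$. Indeed, if $X\in\partial F_N$ is the limit of an increasing sequence of prefixes $g_{(n)}\in F_N$, then $\partial i_v(X)=\lim_n v^{-1}g_{(n)}v$; since $|v^{-1}g_{(n)}|\to\infty$, the trailing copy of $v$ is a suffix of bounded length that is pushed off to infinity, so this limit equals the point $v^{-1}X$ coming from the (left) action of $F_N$ on its boundary. Likewise $\partial i_{v^{-1}}(X)=vX$. Combining with the previous paragraph yields the key identity $\partial\chi^n(X)=v^{-1}\bigl(\partial\psi^n(vX)\bigr)$, valid for all $n\ge 1$ and all $X\in\partial F_N$.

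Both assertions then follow almost immediately. For the statement on fixed words: $i_{v^{-1}}(uv)=v(uv)v^{-1}=vu$, which $\psi$ fixes by hypothesis, and $i_v(vu)=v^{-1}(vu)v=uv$, so $\chi(uv)=uv$. For the statement on attracting points: since $vV$ is fixed by $\partial\psi$ we get $\partial\chi(V)=v^{-1}\bigl(\partial\psi(vV)\bigr)=v^{-1}(vV)=V$, so $V$ is fixed by $\partial\chi$; and if $\mathcal{U}$ is a neighbourhood of $vV$ on which the forward iterates of $\partial\psi$ converge to $vV$, then, left translation by $v^{-1}$ being a homeomorphism, $\mathcal{W}=v^{-1}\mathcal{U}$ is a neighbourhood of $V$ on which, by the key identity and continuity, the forward iterates of $\partial\chi$ converge to $v^{-1}(vV)=V$. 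Hence $V$ is attracting for $\partial(i_v\circ\psi\circ i_{v^{-1}})$.

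There is no genuine obstacle here; this is a transport-of-structure argument. The only point requiring care is the convention-dependent bookkeeping: verifying that $\partial i_v$ is left translation by $v^{-1}$ rather than by $v$, and inserting the conjugating elements on the correct sides so that $\chi^n=i_v\circ\psi^n\circ i_{v^{-1}}$ holds with the paper's definition $i_w(u)=w^{-1}uw$.
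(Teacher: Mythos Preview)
Your proof is correct; the argument is the natural transport-of-structure one, and the bookkeeping with the convention $i_w(u)=w^{-1}uw$ is handled accurately. The paper itself gives no proof of this proposition (it is stated as an evident fact and immediately used), so there is nothing further to compare.
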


		In addition, we prove the following proposition in order to make the transition to points of $\partial^2 F_N$.
		\begin{prop}\label{prop:uni-bi}
		Let $\psi = i_w\circ \varphi^k$ be an automorphism of $\Phi_\infty$, let $V\in \Sp$ (resp. $U\in \Sm$)
		be an attracting point of $\partial \psi$ and let $U$ (resp. $V$) be such that $(U, V)$ is a point of $\Sv$.
		The point $(U, V)$ is an attracting point of $\partial^2 \psi^h$ for some $h\ge 1$.
		\end{prop}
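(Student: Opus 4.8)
The plan is to reduce the statement to showing that $(U,V)$ is a \emph{fixed} point of some power $\partial^2\psi^h$, and then to obtain the attraction for free from the proposition proved just above (a point of $L(\varphi)$ fixed by an element of $\Phi_\infty$ is attracting for it).

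\emph{Reduction.} Since $V$ is fixed by $\partial\psi$ we have $\partial^2\psi^n(U,V)=(\partial\psi^n(U),\partial\psi^n(V))=(\partial\psi^n(U),V)$ for every $n$, so $(U,V)$ is fixed by $\partial^2\psi^h$ exactly when $\partial\psi^h(U)=U$. Suppose such an $h$ has been found. Then $U\in\Sm\subseteq L(\varphi)$ is fixed by $\partial\psi^h\in\Phi_\infty$, hence attracting for $\partial\psi^h$ by the proposition just quoted, and likewise $V\in\Sp\subseteq L(\varphi)$ is attracting for $\partial\psi^h$; since the topology of $\partial^2F_N$ is the one induced from $\partial F_N\times\partial F_N$ and $(U,V)$ lies off the diagonal, products of the two basins of attraction form a neighbourhood basis of $(U,V)$, and $(U,V)$ is attracting for $\partial^2\psi^h$. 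Thus it suffices to produce $h\ge 1$ with $\partial\psi^h(U)=U$.

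\emph{Producing $h$.} Write $\psi^n=i_{w_{(n)}}\circ\varphi^{nk}$ with $w_{(n)}=\varphi^{(n-1)k}(w)\cdots\varphi^k(w)\,w$, so that $\partial^2\psi^n(U,V)=w_{(n)}^{-1}\cdot\partial^2\varphi^{nk}(U,V)$. Because $V$ is fixed by $\partial\psi$, one has the fixed-ray identity $\varphi^{nk}(V)=w_{(n)}V$, and the equality $w_{(n)}^{-1}\varphi^{nk}(V)=V$ between pure positive points forces a decomposition $w_{(n)}=c_n*e_n$ in which $c_n$ — pure positive, a prefix of $\varphi^{nk}(V)$ — is the longest common prefix of $w_{(n)}$ and $\varphi^{nk}(V)$, while $e_n$ is pure negative with $e_n^{-1}$ a prefix of $V$. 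Substituting $\varphi^{nk}(V)=w_{(n)}V$ into $\partial^2\varphi^{nk}(U,V)=(\varphi^{nk}(U),\varphi^{nk}(V))$ and rewriting the left translation by $w_{(n)}^{-1}=e_n^{-1}c_n^{-1}$ as a composition of shifts along the second coordinate, one obtains
\[
\partial^2\psi^n(U,V)=S^{\,|c_n|-|e_n|}\bigl(\partial^2\varphi^{nk}(U,V)\bigr).
\]
As $\partial^2\varphi^{nk}(\Sv)\subseteq\Sv$ and $S$ is a homeomorphism of $\Sv$, every point $\partial^2\psi^n(U,V)$ lies in $\Sv$, and its second coordinate is the fixed point $V$. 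Hence the whole forward $\partial^2\psi$-orbit of $(U,V)$ sits inside the fibre over $V$ of the projection $\Sv\to\Sp$ onto the second coordinate. That fibre is finite — a standard feature of the attracting subshift of a primitive substitution (a leaf-ray has only finitely many completions), visible through the prefix-suffix representation and the recognizability of primitive substitutions recalled above. Consequently $\{\partial\psi^n(U):n\ge 0\}$ is finite; as $\partial\psi$ is a homeomorphism, a finite forward orbit is periodic, so $\partial\psi^h(U)=U$ for some $h\ge 1$, which finishes the proof.

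\emph{Main obstacle.} The two delicate points are: first, turning the left translation by $w_{(n)}^{-1}$ into a power of $S$ — here the fixed-ray hypothesis is what keeps $(U,V)$ and its whole orbit inside $\Sv$ (rather than merely in $F_N\cdot\Sv$), and the bookkeeping of which letters cancel between $w_{(n)}$ and the prefix $e_n^{-1}$ of $V$ must be done carefully; second, the finiteness of the fibres of $\Sv\to\Sp$. I expect the second to be the genuine input, even though in this setting it is essentially inherited from the symbolic-dynamics background.
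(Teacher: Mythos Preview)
Your reduction step is fine, but the ``Producing $h$'' step contains a genuine error. The displayed identity
\[
\partial^2\psi^n(U,V)=S^{\,|c_n|-|e_n|}\bigl(\partial^2\varphi^{nk}(U,V)\bigr)
\]
fails whenever $e_n\neq\epsilon$. Write $(X',Y')=\partial^2\varphi^{nk}(U,V)\in\Sv$. Applying $c_n^{-1}$ is indeed $S^{|c_n|}$ and lands in $\Sv$ at a point $(c_n^{-1}*X',\,V_{|e_n|}V_{|e_n|+1}\ldots)$ with pure negative first coordinate. But the next step, left translation by $e_n^{-1}$ (pure positive), is \emph{not} $S^{-|e_n|}$: on $\Sv$ one has $S^{-1}(A,B)=A_0^{-1}\cdot(A,B)$, so $S^{-|e_n|}$ prepends letters read off the \emph{first} coordinate, whereas $e_n^{-1}$ is determined by $V$, not by $X'$. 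Concretely, $w_{(n)}^{-1}X'=e_n^{-1}*c_n^{-1}*X'$ (no cancellations, since $c_n*e_n$ is reduced), which begins with positive letters and is therefore not in $\Sm$. Hence $\partial^2\psi^n(U,V)\notin\Sv$, and the finite-fibre argument over $V$ no longer applies. This situation really occurs: at this stage nothing forces $w$ to be pure (Proposition~\ref{prop:purity} needs a fixed point of $\partial^2\psi$ in $\Sv$, which is precisely what you are trying to produce). For instance, with $\varphi:a\mapsto ab,\ b\mapsto a$ and $w=ab^{-1}$, the ray $V=b\cdot(\text{Fibonacci word})$ lies in $\Sp$ and is fixed by $\partial(i_w\circ\varphi)$, yet $\partial\psi(U)=ba^{-1}\varphi(U)$ starts with the positive letter $b$ and is not in $\Sm$.

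The paper circumvents this by never leaving $\Sv$: it iterates $\partial^2\varphi^{k}$ (which does map $\Sv$ into $\Sv$) rather than $\partial^2\psi$, and uses the finite-to-one projection to obtain $j\ge1$ and $p\in\mathbb Z$ with $\partial^2\varphi^{kj}(U,V)=S^p(U,V)$. From $p$ one reads off a pure word $w'$ such that $i_{w'}\circ\varphi^{kj}$ fixes $(U,V)$; since this automorphism and $\psi^j$ both fix $V$ and the $F_N$-stabiliser of $V\in\Sp$ is trivial, they coincide, giving $h=j$. Your shift-bookkeeping idea is the right instinct, but it must be carried out on the $\varphi$-orbit, not on the $\psi$-orbit.
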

		\begin{proof}
		The main argument is there exist $j\in \mathds{N}^*$ and $p\in \mathds{Z}$ such that $\partial^2 \varphi^{kj}(U, V) = S^p(U, V)$.
		This is a consequence of a theorem of \cite{Que} stating that the canonical projection of $\Sv$ to $\Sm$ (resp. $\Sp$)
		is finite-to-one. If $\psi=\varphi^k$, the integer $p$ needs to be $0$ and we are done.
		If $\psi\ne \varphi^k$, note that $p\ne 0$ and define $w'^{-1}$ to be the prefix of length $|p|$ of $U$ (resp. $V$) if $p>0$ (resp. $p<0$).
		The point $(U, V)$ is then attracting for $\partial^2 (i_{w'}\circ \varphi^{kj})$ and the automorphism $i_{w'}\circ \varphi^{kj}$
		is necessarily a power of $\psi$.
		\end{proof}

		Note that, as we are ultimately interested in the FO-index, studying powers of automorphisms has no influence on the outcome.

		We now state a few results on automorphisms $\psi = i_w\circ\varphi^k\in \Phi_\infty$ such that $\partial^2 \psi$ has an attracting point in $\Sv$.
		As we will see in section \ref{sec:identsing}, the word $w$ plays a major part in the search
		for automorphisms with positive indices. We start here with some preliminary results.
		\begin{prop}\label{prop:purity}
		Let $\psi = i_w\circ \varphi^k$, with $w\ne \epsilon$, be an automorphism of $\Phi_\infty$.
		If $\partial^2 \psi$ fixes a point $W\in \Sv$, then $w$ is pure.
		\end{prop}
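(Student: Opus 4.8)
The plan is to argue by contradiction, using the fact that $W = (U,V) \in \Sv$ is fixed by $\partial^2\psi$ with $\psi = i_w\circ\varphi^k$, so $\psi$ fixes both $U \in \Sm$ and $V \in \Sp$ as points of $\partial F_N$. Writing out what $\partial\psi(V) = V$ means: since $\varphi^k$ is $A_N$-positive, $\varphi^k(V)$ is pure positive, and then $\partial\psi(V) = w^{-1}\varphi^k(V)$. For this to equal $V$ again, the word $w$ must interact with the pure positive point $\varphi^k(V)$ in a controlled way. Concretely, $w^{-1}*\varphi^k(V) = V$ forces $w^{-1}$ to be a prefix of $V$ up to cancellation, and the equation $w^{-1}\varphi^k(V) = V$ combined with the analogous equation $w^{-1}\varphi^k(U) = U$ on the negative side (where $\varphi^k(U)$ is pure positive as well, since $U \in \Sm$ is pure negative — wait, $U$ is pure negative, so $\varphi^k(U)$ is pure negative) should pin down the letters of $w$.

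The key step is the following dichotomy on the first letter of $w$. Suppose $w$ is not pure, i.e. $w$ contains an orientation change, or more simply suppose $w = w_0\cdots w_m$ has letters of both signs. I would first analyze the equation $\partial\psi(V) = V$ with $V \in \Sp$ pure positive. Since $\varphi^k(V)$ is pure positive and the left translation $w^{-1}(\varphi^k(V))$ must reproduce the pure positive point $V$, examine how much cancellation occurs between $w^{-1}$ and $\varphi^k(V)$: if $w^{-1}$ ends in a letter of $A_N$ (i.e. $w$ starts with a letter of $A_N^{-1}$) there is no cancellation and $V = w^{-1}*\varphi^k(V)$, which is impossible since $V$ is pure positive while $w^{-1}$ contains a letter of $A_N^{-1}$ at its end, contradicting that $V$ starts with that letter unless $w^{-1}$ is itself... hmm, actually $w^{-1}$ would be a prefix of $V$, forcing $w^{-1}$ pure positive, i.e. $w$ pure negative. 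Symmetrically, analyzing $\partial\psi(U) = U$ with $U \in \Sm$ pure negative and $\varphi^k(U)$ pure negative: here $w^{-1}\varphi^k(U) = U$. If $w^{-1}$ starts with a letter of $A_N^{-1}$ we'd need that to be consistent with $U$, pushing toward $w$ pure positive; if $w^{-1}$ starts with a letter of $A_N$ we get full cancellation of $w^{-1}$ against the pure negative $\varphi^k(U)$, which requires $w^{-1}$ pure negative, i.e. $w$ pure positive. The point is that looking at the negative coordinate forces one purity and looking at the positive coordinate forces (at worst) the same or the opposite, and reconciling both — together with $w \ne \epsilon$ — leaves only the two pure cases; a mixed $w$ gets excluded because it cannot simultaneously be a prefix of the pure positive $V$-side data and interact correctly with the pure negative $U$-side data.

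The main obstacle I expect is handling the cancellation bookkeeping carefully: the translation action $w^{-1}V$ is defined via the longest common prefix of $w$ and $V$, and when $w$ has mixed signs one must track exactly which portion of $w^{-1}$ survives and which portion of $\varphi^k(V)$ (or $\varphi^k(U)$) is consumed. The cleanest route is probably to use the fixed-point equations in the form $V = \varphi^k(V)$ up to a bounded prefix change — more precisely, there is a prefix relation $w * V = \varphi^k(V)$ or $w^{-1} * \varphi^k(V)= V$ depending on signs — and then invoke that $\varphi^k(V)$, $\varphi^k(U)$ are pure of known sign to conclude that the non-cancelling part of $w$ must be pure of the matching sign, while the cancelling part, being a suffix (resp. prefix) of a pure word on one side and of $w$ on the other, is also pure. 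Combining the two sides and using primitivity (which guarantees $\varphi^k(a)$ is genuinely long, so no degenerate coincidences) yields that all of $w$ is pure. I would also keep in mind Proposition~\ref{prop:outsideL} and the descriptions of the $S$-action on prefix-suffix developments from Section~\ref{subsec:psr}, which give the explicit forms of $U$ and $V$ and make the cancellation analysis concrete.
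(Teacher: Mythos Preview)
Your approach is essentially the paper's: analyze the two fixed-point equations $wV=\partial\varphi^k(V)\in\Sp$ and $wU=\partial\varphi^k(U)\in\Sm$ separately, and use the purity of $V,\varphi^k(V)$ (positive) and of $U,\varphi^k(U)$ (negative) to constrain the shape of $w$. The paper organizes the cancellation bookkeeping more crisply than your sketch: from the $V$-side one gets that $w$ is pure positive, pure negative, or of the form $(\text{pure positive})*(\text{pure negative})$, while the $U$-side gives pure positive, pure negative, or $(\text{pure negative})*(\text{pure positive})$; the two ``mixed'' options are incompatible (look at the first letter of $w$), so $w$ must be pure.

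Two things in your plan are unnecessary and would only obscure the argument: primitivity plays no role here (no length estimates are needed, only sign/purity of letters), and neither Proposition~\ref{prop:outsideL} nor the prefix-suffix machinery of Section~\ref{subsec:psr} is used. Drop those, and tighten the case analysis to the three-way split above on each coordinate; then the contradiction is a one-line observation about the first letter of $w$.
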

		\begin{proof}
		If $\partial^2 \psi$ fixes $W = (U, V)$,
		then $V = w^{-1}\partial \varphi^{k}(V)$ and $wV = \partial \varphi^{k}(V)$.
		Since $V$ is in $\Sp$, then so are $\varphi^{k}(V)$ and $wV$.
		We are left with three options:
		\begin{itemize}
			\item $w$ is pure positive,
			\item $w$ is pure negative (in which case $w^{-1}$ is a prefix of $V$),
			\item $w = w_{(p)}w_{(n)}$ with $w_{(p)}$ pure positive and $w_{(n)}$ pure negative.
		\end{itemize}
		Similarly, $U = w^{-1}\partial \varphi^{k}(U)$, $wU\in \Sm$ and $w$ verifies one of the following conditions:
		\begin{itemize}
			\item $w$ is pure positive (in which case $w^{-1}$ is a prefix of $U$),
			\item $w$ is pure negative,
			\item $w = w_{(m)}w_{(q)}$ with $w_{(m)}$ pure positive and $w_{(q)}$ pure negative.
		\end{itemize}
		Hence, if $w$ is not pure, then $w=w_{(p)}w_{(n)}$ with $w_{(p)}$ pure positive and $w_{(n)}$ pure negative
		and $w=w_{(m)}w_{(q)}$ with $w_{(m)}$ pure negative and $w_{(q)}$ pure positive, which is impossible. 
		\end{proof}

		We insist on the fact that $w$ can not be any word of $F_N$. In addition to its purity,
		the above proposition tells us $w^{-1}$ is a prefix of $U$ (resp. $V$) if $w$ is pure positive (resp. negative),
		so that $wU$ (resp. $wV$) is always a point of $\Sm$ (resp. $\Sp$).
		We deduce the following proposition.

		\begin{prop}\label{prop:freetocomb}
		Let $\psi = i_w\circ \varphi^k$, with $w\ne \epsilon$, be an automorphism of $\Phi_\infty$ such that
		$W\in \Sv$ is a fixed point of $\partial^2 \psi$.
		\begin{itemize}
			\item If $w$ is pure positive, then $S^{|w|}\circ\partial^2 \varphi^{k}(W) = W$.
			\item If $w$ is pure negative, then $S^{-|w|}\circ\partial^2 \varphi^{k}(W) = W$.
		\end{itemize}
		\end{prop}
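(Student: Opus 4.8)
The plan is to unwind the definition of $\psi$ acting on $W$ and combine it with the relation between $S$ and $\partial^2\varphi$ on developments, but the cleanest route here is purely symbolic, avoiding prefix-suffix developments entirely. Suppose $w$ is pure positive; the case $w$ pure negative is symmetric (swapping the roles of the two coordinates and of $S$ with $S^{-1}$). Since $W=(U,V)$ is fixed by $\partial^2\psi = \partial^2(i_w\circ\varphi^k)$, we have $U = w^{-1}\partial\varphi^k(U)$ and $V = w^{-1}\partial\varphi^k(V)$, equivalently $\partial\varphi^k(U)=w*U$ and $\partial\varphi^k(V)=w*V$ (the concatenations being cancellation-free by Proposition~\ref{prop:purity}, which tells us $w^{-1}$ is a prefix of both $U$ and $V$ since $w$ is pure positive).

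The next step is to read off what $S^{|w|}$ does. Write $w = w_0 w_1\dots w_{|w|-1}$ with each $w_i\in A_N$. Because $\partial^2\varphi^k(W) = (\partial\varphi^k(U),\partial\varphi^k(V)) = (w*U, w*V)$, I want to apply $S$ exactly $|w|$ times to strip off $w$. Recall $S(X,Y) = (Y_0^{-1}X, Y_0^{-1}Y)$ where $Y_0$ is the first letter of $Y$. Applying $S$ once to $(w*U,w*V)$: the first letter of $w*V$ is $w_0$, so $S(w*U,w*V) = (w_0^{-1}wU, w_0^{-1}wV) = (w_1\dots w_{|w|-1}*U, w_1\dots w_{|w|-1}*V)$. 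Iterating, an easy induction on $j\le |w|$ shows $S^{j}(w*U,w*V) = ((w_j\dots w_{|w|-1})*U,\ (w_j\dots w_{|w|-1})*V)$, using at each stage that the first letter of the second coordinate is $w_j\in A_N$ and that no cancellation occurs (again from $w^{-1}$ being a prefix of $V$). At $j=|w|$ the leftover prefix is empty, so $S^{|w|}(w*U,w*V) = (U,V) = W$. Hence $S^{|w|}\circ\partial^2\varphi^k(W) = S^{|w|}(w*U,w*V) = W$, which is the claim.

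The pure negative case is handled the same way after observing that then $w^{-1}$ is a prefix of $V$ (and of $U$) rather than $w$, so $\partial\varphi^k(V) = w*V$ still holds but now reading $w = w_0\dots w_{|w|-1}$ with $w_i\in A_N^{-1}$; stripping these off requires the inverse shift, and one checks by the same induction that $S^{-1}(Y_0^{-1}$-type step) peels one letter of $w$ at a time, giving $S^{-|w|}\circ\partial^2\varphi^k(W)=W$. Alternatively one can simply note that $w$ pure negative for $\psi=i_w\circ\varphi^k$ corresponds, after passing to inverses, to the pure positive situation, and invoke the first case.

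The only genuinely delicate point is bookkeeping the cancellation-freeness throughout the induction: one must be sure that at every intermediate stage the first letter of the second coordinate is the expected letter of $w$ and that multiplying by its inverse genuinely deletes a letter rather than triggering a cascade of cancellations deeper in $U$ or $V$. This is exactly what Proposition~\ref{prop:purity} guarantees — $w^{-1}$ is a prefix of both $U$ and $V$, so $w*U$ and $w*V$ are literally the words $w$ followed by $U$ and $V$ with no interaction — and once that is in hand the induction is immediate. I expect this to be the main (and essentially only) obstacle, and it is already resolved by the preceding proposition.
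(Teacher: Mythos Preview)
Your overall approach is correct and is exactly what the paper has in mind --- indeed the paper gives no proof at all, simply stating that the proposition follows from the discussion after Proposition~\ref{prop:purity}. The essential computation is that when the second coordinate of $\partial^2\varphi^k(W)$ begins with the reduced word $w$, applying $S^{|w|}$ amounts to left-multiplying both coordinates by $w^{-1}$, which recovers $\partial^2\psi(W)=W$.

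There is, however, a bookkeeping error in your treatment of the first coordinate. You assert that $w^{-1}$ is a prefix of both $U$ and $V$ when $w$ is pure positive, and hence that $\partial\varphi^k(U)=w*U$. This is false: $w^{-1}$ is pure negative while $V\in\Sp$ is pure positive, so $w^{-1}$ cannot be a prefix of $V$; and $U\in\Sm$ is pure negative, so $w*U$ (cancellation-free concatenation of a positive and a negative word) would not even lie in $\Sm$. What actually happens is that $w^{-1}$ is a prefix of $U$ (this is the content of the remark after Proposition~\ref{prop:purity}), so $wU$ involves \emph{complete} cancellation of $w$; and $wV=w*V$ holds simply because both $w$ and $V$ are pure positive. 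Fortunately your induction only ever reads the first letter of the \emph{second} coordinate, so the argument survives: $S^{|w|}$ acts on $(wU,w*V)$ as left multiplication by $w^{-1}$, regardless of the shape of the first coordinate, and $w^{-1}(wU,wV)=(U,V)$. You should correct the claim about $U$ (and the symmetric claim in the negative case) accordingly.
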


		Finally, we give a little insight on automorphisms $\psi$ such that $\partial^2 \psi$ fixes more than one point of $\Sv$.
		The proposition below is trivially verified and is one of the fundamental ideas of the algorithm to come.
		\begin{prop}\label{prop:commoncoord}
		Let $\psi = i_w\circ \varphi^k$, with $w\ne \epsilon$, be an automorphism of $\Phi_\infty$.
		Let $W = (U, V)$ and $W' = (U', V')$ be two points of $\Sv$ fixed by $\partial^2 \psi$.
		If $w$ is pure positive, then we have
		\begin{center}
			$U = U' = \lim\limits_{n\to +\infty} w^{-1}\varphi^{k}(w^{-1})\varphi^{2k}(w^{-1})\dots \varphi^{(n-1)k}(w^{-1})\varphi^{nk}(w^{-1})$.
		\end{center}
		If $w$ is pure negative, then
		\begin{center}
			$V = V' = \lim\limits_{n\to +\infty} w^{-1}\varphi^{k}(w^{-1})\varphi^{2k}(w^{-1})\dots \varphi^{(n-1)k}(w^{-1})\varphi^{nk}(w^{-1})$.
		\end{center}
		\end{prop}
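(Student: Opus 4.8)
The plan is to turn the fixed-point equation $\partial^2\psi(W)=W$ into an explicit formula for the coordinate of $W$ lying on the ``$w$-side'', and then to observe that this formula involves only $\varphi$, $w$ and $k$, so it must produce the same value for every point of $\Sv$ fixed by $\partial^2\psi$.

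Assume first that $w$ is pure positive. Exactly as in the proof of Proposition \ref{prop:purity}, $\partial i_w$ is the left translation by $w^{-1}$, so $\partial^2\psi(U,V)=(U,V)$ gives in particular $U = w^{-1}\cdot\partial\varphi^{k}(U)$. Since $\varphi$ is $A_N$-positive, $\varphi^k$ sends pure negative words to pure negative words, and as $U\in\Sm$ is pure negative, so is $\partial\varphi^{k}(U)$; because $w^{-1}$ is pure negative as well, the product above is cancellation free, i.e.\ $U = w^{-1}*\partial\varphi^{k}(U)$. This is nothing but the prefix observation already recorded after Proposition \ref{prop:purity}, now written as an equality.

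Next I would iterate this identity. Applying $\partial\varphi^{k}$ repeatedly and using at each stage that $\varphi^{jk}(w^{-1})$ and $\partial\varphi^{jk}(U)$ are pure negative, I obtain for every $n\ge 1$
$$U = w^{-1}*\varphi^{k}(w^{-1})*\varphi^{2k}(w^{-1})*\cdots*\varphi^{(n-1)k}(w^{-1})*\partial\varphi^{nk}(U).$$
Hence $w^{-1}\varphi^{k}(w^{-1})\cdots\varphi^{(n-1)k}(w^{-1})$ is a prefix of $U$ whose length is at least $n$ by $A_N$-positivity, so letting $n\to+\infty$ gives
$$U = \lim_{n\to+\infty} w^{-1}\varphi^{k}(w^{-1})\varphi^{2k}(w^{-1})\cdots\varphi^{(n-1)k}(w^{-1})\varphi^{nk}(w^{-1}),$$
which is the asserted value. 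The right-hand side mentions neither $V$ nor $W$, so running the identical computation with $W'=(U',V')$ forces $U'$ to be this same limit, whence $U=U'$. The pure negative case is obtained verbatim, with $V$ and $\Sp$ in place of $U$ and $\Sm$.

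I do not expect a genuine obstacle here; the only point requiring attention is bookkeeping of cancellations, namely checking that each concatenation $\varphi^{jk}(w^{-1})*\varphi^{(j+1)k}(w^{-1})$ and the final $\varphi^{(n-1)k}(w^{-1})*\partial\varphi^{nk}(U)$ is reduced. This is automatic because all the blocks involved are pure negative, which is presumably why the statement is described as trivially verified.
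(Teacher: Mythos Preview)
Your proof is correct and matches precisely the computation the paper has in mind; note that the paper itself omits the argument and simply records that the proposition ``is trivially verified'', so your write-up is exactly the kind of routine unwinding of the fixed-point equation $U=w^{-1}*\partial\varphi^k(U)$ (all blocks pure negative, hence no cancellations, hence the limit formula) that is being left to the reader.
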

		Note that this is not always true if $w=\epsilon$.\\

		Recall we want to prove theorem \ref{thm:twopoints}; namely, we want to be able to identify automorphisms
		with positive indices with automorphisms fixing several points of the attracting subshift.
		This means we will need to find new points of the attracting subshift that are attracting for a given automorphism.
		This is the purpose of the following proposition.
		\begin{prop}\label{prop:genwordspart2}
		Let $v=v_0v_1\dots v_h\in F_N$ be a non empty pure word such that $\varphi(v) = p*v*s$ with $p,s\in F_N$ possibly empty.
		There exists $k\ge 1$ such that the homeomorphism $\partial^2 (i_{p}\circ \varphi)^k$ has an attracting point $(U, V)$ in
		$\Sv$ verifying $V_0=v_0$ (resp. $U_0=v_0$) if $v$ is pure positive (resp. negative).
		\end{prop}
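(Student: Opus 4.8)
The plan is to find the required automorphism by iterating $i_p \circ \varphi$ starting from a judiciously chosen point of the attracting subshift, and then invoking Lemma \ref{lem:genwordspart1}. Assume $v$ is pure positive (the pure negative case is symmetric, working with $\Sm$ instead of $\Sp$ and with the appropriate orientation convention). Since $\varphi(v)=p*v*s$, iterating gives $\varphi^n(v) = \varphi^{n-1}(p)\cdots \varphi(p)\, p * v * s\, \varphi(s)\cdots \varphi^{n-1}(s)$, all concatenations being cancellation-free because $v$ is pure positive and $\varphi$ is $A_N$-positive. In particular the word $v$ has no illegal pairs, so Lemma \ref{lem:genwordspart1} applies to it (if $s\ne\epsilon$) and produces the point $Y = \lim_n s\varphi(s)\cdots\varphi^n(s) \in \Sm\cup\Sp$, which is in fact in $\Sp$ since $s$ is pure positive. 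The point to keep in mind is $Y' = v*Y = \lim_n v s\varphi(s)\cdots\varphi^n(s)$, whose first letter is $v_0$.

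**Locating the fixed/attracting point.** First I would handle the subcase $s=\epsilon$: then $\varphi(v)=p*v$, and by primitivity (as noted in the proof of Lemma \ref{lem:genwordspart1}) $v$ reduces to a single singular letter $v_0$ with $\varphi(v_0)=q*v_0$ for a nonempty pure positive $q$; replace $p$ by $q$ and pass to the pure negative side, or simply note this forces the structure we want on $\Sm$. In the generic subcase $s\ne\epsilon$, set $\psi_0 = i_p\circ\varphi$. The relation $\varphi(v)=p*v*s$ means $\psi_0(v) = p^{-1}\varphi(v) = v*s$ (cancellation-free), hence $\partial\psi_0$ fixes the point $V := v*Y = vs\varphi(s)\cdots \in \Sp$, because $\psi_0^n(v s\varphi(s)\cdots\varphi^{n-1}(s)) = v s\varphi(s)\cdots\varphi^{n-1}(s)\varphi^n(s)$ is a growing prefix of $V$. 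By Proposition on $\Sm$-completion (the canonical projection being surjective), pick $U\in\Sm$ with $(U,V)\in\Sv$. By Proposition \ref{prop:uni-bi}, $(U,V)$ is an attracting point of $\partial^2\psi_0^h$ for some $h\ge 1$; and $\psi_0^h = (i_p\circ\varphi)^h$, with $V_0 = v_0$ as required.

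**Reconciling with the statement's form.** The one gap is that the statement asks for $(i_p\circ\varphi)^k$, whereas the natural fixed point above is fixed by $\partial\psi_0 = \partial(i_p\circ\varphi)$ itself on its $\Sp$-coordinate; passing to the power $h$ from Proposition \ref{prop:uni-bi} only strengthens the conclusion, since an attracting point of $\partial^2\psi_0^h$ is what we want and taking $k=h$ works. I should double-check that $\psi_0^h$ is literally of the form $i_{w}\circ\varphi^h$ with $w$ pure positive — this follows since $i_p\circ\varphi$ composed $h$ times equals $i_{w}\circ\varphi^h$ with $w = \varphi^{h-1}(p)\cdots\varphi(p)\,p$, a product of pure positive words, hence pure positive — so the hypotheses of Propositions \ref{prop:purity}–\ref{prop:commoncoord} are available if needed downstream.

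**Main obstacle.** The delicate point is the case analysis around $s=\epsilon$ (and symmetrically $p=\epsilon$), where the naive point $v*Y$ degenerates and one must instead extract the singular letter and move to the other coordinate of $\Sv$; getting the orientation bookkeeping right there — which coordinate carries the limit, and verifying the first letter is indeed $v_0$ — is where care is needed. Everything else is a routine cancellation-free bookkeeping argument combined with Lemma \ref{lem:genwordspart1}, Proposition \ref{prop:uni-bi}, and surjectivity of the projection $\Sv\to\Sp$.
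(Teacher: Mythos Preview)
Your approach is natural but contains a genuine gap, and it differs from the paper's route in a way that matters.

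\textbf{The gap.} In the main case $s\ne\epsilon$ you assert that $V:=v*Y = v\,s\,\varphi(s)\cdots$ lies in $\Sp$. Lemma~\ref{lem:genwordspart1} only gives $Y\in\Sp$ (via the auxiliary point $Y'=v_i\cdots v_hY$ built from a \emph{single} singular letter $v_i$); it does not show that you may prepend the full prefix $v_0\cdots v_{i-1}$ and stay in $\Sp$. For $vY\in\Sp$ you would need every finite prefix $v\,s\,\varphi(s)\cdots\varphi^{n-1}(s)$ to lie in the language of $\Sv$, hence in particular $v$ itself. But the hypothesis $\varphi(v)=p*v*s$ only says $v$ is a factor of $\varphi^n(v)=\varphi^n(v_0)\cdots\varphi^n(v_h)$; it does not place $v$ inside some $\varphi^m(a)$ for a single letter $a$, which is what ``$v$ is in the language'' means. (For primitive \emph{substitutions} this can genuinely fail: e.g.\ $\sigma(a)=ca$, $\sigma(b)=bac$, $\sigma(c)=cb$ is primitive with $\sigma(ab)=c*ab*ac$ yet $ab$ is not in the language; this particular $\sigma$ happens not to be an automorphism, but it shows the inference is not formal.) Without $V\in\Sp$ you cannot pick $U$ with $(U,V)\in\Sv$, and the appeal to Proposition~\ref{prop:uni-bi} collapses.

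\textbf{How the paper avoids this.} The paper works on the opposite coordinate. For $p\ne\epsilon$ it applies the singular--letter analysis to $v^{-1}$ (where the ``suffix'' is now $p^{-1}\ne\epsilon$), obtaining a singular letter $v_i^{-1}$ with $\varphi(v_i)=q*v_i*r$ and the point $X=v_i^{-1}\cdots v_0^{-1}X'\in\Sm$, where $X'=\lim_n p^{-1}\varphi(p^{-1})\cdots\varphi^n(p^{-1})$. The crucial point is that $X$ is the left tail of the iterates $\varphi^n(v_i)$ of a \emph{single letter}, so membership in $\Sm$ is automatic. One then picks any $Y$ with $(X,Y)\in\Sv$ and sets $(U,V)=S^{-(i+1)}(X,Y)\in\Sv$; the shift guarantees $V_0=v_0$ without ever needing $v$ to be in the language. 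Proposition~\ref{prop:uni-bi} is then applied on the $U$--side ($U=X'$ is fixed by $\partial(i_p\circ\varphi)$). Your $s=\epsilon$ discussion is also off: from $\varphi(v)=p*v$ the only singular letter is $v_h$, not $v_0$, so one cannot ``reduce $v$ to $v_0$''; the paper's passage to $v^{-1}$ is exactly what handles this case uniformly.
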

		\begin{proof}
		Since $v$ is pure and $\varphi$ is $A_N$-positive, the word $v$ does not contain any illegal pair.

		First suppose $p$ is empty. This means $v_0$ is the first letter of $\varphi(v_0)$.
		The point $X = \lim\limits_{n\to +\infty} \varphi^n(v_0)$ is then a point of $\Sm\cup \Sp$
		that is fixed by $\partial \varphi$, and we conclude with proposition \ref{prop:uni-bi}.

		We now assume $p$ is not empty. We actually work with $\varphi(v^{-1}) = s^{-1}*v^{-1}*p^{-1}$.
		Recall that a letter $v_i^{-1}$ of $v^{-1}$ is singular if $\varphi(v_{i-1}^{-1}\dots v_0^{-1})$
		is a suffix of $v_{i-1}^{-1}\dots v_0^{-1}p^{-1}$ and $\varphi(v_h^{-1}\dots v_{i+1}^{-1})$
		is a prefix of $s^{-1}v_h^{-1}\dots v_{i+1}^{-1}$.

		Define $X' = \lim\limits_{n\to +\infty} p^{-1}\varphi(p^{-1})\dots \varphi^n(p^{-1})$.
		Following the proof of lemma \ref{lem:genwordspart1} provides a singular letter $v_i^{-1}$
		of $v^{-1}$ such that $\varphi(v_i^{-1}) = r^{-1}*v_i^{-1}*q^{-1}$ with $q^{-1}$ non empty
		and such that the point $X = v_i^{-1}\dots v_0^{-1}X'$ is in $\Sm\cup \Sp$.
		If $v$ is pure positive (resp. negative) then choose a point $Y$ of $\Sp$ (resp. $\Sm$)
		such that $(X, Y)$ (resp. $(Y, X)$) is in $\Sv$, and define $(U, V) = S^{-(i+1)}(X, Y)$
		(resp. $(U, V) = S^{i+1}(Y, X)$). Observe that $U = X'$ (resp. $V = X'$) if $p$ is pure positive (resp. negative)
		and conclude with proposition \ref{prop:uni-bi} that the point $(U, V)\in \Sv$ is fixed by
		$\partial^2 (i_{p}\circ \varphi)^k$ for some integer $k\ge 1$ and verifies $V_0=v_0$ (resp. $U_0=v_0$)
		if $v$ is pure positive (resp. negative).
		\end{proof}

		We now move on to proving theorem \ref{thm:twopoints} for each of the three possible cases
		of automorphisms with positive indices.

		\subsubsection{Automorphisms with trivial fixed subgroups}

		In this section, we assume $\psi = i_w\circ \varphi^k$
		is an automorphism with a trivial fixed subgroup and a positive index.
		Using propositions \ref{prop:isomove} and \ref{prop:uni-bi}, we can simply assume $\partial^2 \psi$ has an attracting
		point $(U, V)$ in $\Sv$ and $\partial \psi$ has an other attracting point $X$ ($X\ne U$ and $X\ne V$).
		As $\ind(\psi) > 0$, we must have $X=v*V'$ for some $v\in F_N$ and $V'\in \Sm\cup \Sp$.

		Obviously, if $v$ is empty, then proposition \ref{prop:uni-bi} tells us some power of $\partial^2 \psi$ fixes
		at least two points of $\Sv$. We suppose $v\ne \epsilon$.
		We also assume $v_0\ne U_0$ and $v_0\ne V_0$ (where $v_0,~U_0,~V_0$ are the first letters of $v,~U$ and $V$ respectively);
		if it were not the case, we could simply find an automorphism in the isogredience class of $\psi$ verifying this property.
		The main point is the following.

		\begin{prop}\label{prop:case1}
		Let $v_0$ be the first letter of $v$. There is a point
		$(U'', V'')\in \Sv$ that is attracting for $\partial^2 \psi^h$ for some integer $h\ge 1$ and such that $v_0=U_0''$
		(resp. $v_0=V_0''$) if $v_0\in A_N^{-1}$ (resp. $v_0\in A_N$).
		\end{prop}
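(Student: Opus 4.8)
The plan is to analyze the word $v$ through the prism of illegal pairs, exactly as in Lemma~\ref{lem:genwordspart1} and Proposition~\ref{prop:genwordspart2}, and to extract from $v_0$ a pure word whose image under a power of $\varphi$ nests onto itself with non-empty overhang on the correct side. First I would use the hypothesis $\ind(\psi)>0$ together with Proposition~\ref{prop:outsideL} (and its corollary) to guarantee that the second attracting point is of the form $X = v*V'$ with $V'\in\Sm\cup\Sp$, so $X$ genuinely lies in $L(\varphi)$; after replacing $\psi$ by a power and moving inside its isogredience class via Proposition~\ref{prop:isomove}, I may assume $v\neq\epsilon$ and $v_0\notin\{U_0,V_0\}$. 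The point $X=v*V'$ being fixed by $\partial\psi=\partial(i_w\circ\varphi^k)$ means $wv*V'$ and $v*V'$ have the same image under $\partial\varphi^k$ up to the shift induced by $w$; unravelling this (as in the proof of Proposition~\ref{prop:purity}) forces $\varphi^k(v)$ to contain $v$ as a subword in a controlled position, i.e.\ after possibly passing to a further power we get a factorization $\varphi^{k'}(v) = p'*v*s'$ for suitable $p',s'\in F_N$.

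Next I would reduce to the pure case. Since $X$ has only finitely many illegal pairs and $\varphi$ does not create new ones, an iterate pushes all illegal pairs of $v*V'$ far to the right; tracking the first letter $v_0$, I would isolate a maximal pure prefix-block $v' = v_0v_1\cdots v_m$ of (an iterate applied to) $v$ such that $v'$ starts with $v_0$, is pure, and still satisfies a nesting relation $\varphi^{k''}(v') = p''*v'*s''$ with $p'',s''$ possibly empty — this is precisely the hypothesis of Proposition~\ref{prop:genwordspart2}. The orientation of $v_0$ (positive or negative) is preserved under this extraction because $\varphi$ is $A_N$-positive, so $v'$ is pure positive exactly when $v_0\in A_N$. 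Applying Proposition~\ref{prop:genwordspart2} to $v'$ then yields an integer $h\ge 1$ and an attracting point $(U'',V'')\in\Sv$ of $\partial^2(i_{p''}\circ\varphi^{k''})^h$ with $V_0''=v_0$ if $v_0\in A_N$ and $U_0''=v_0$ if $v_0\in A_N^{-1}$; one checks that $i_{p''}\circ\varphi^{k''}$ is, up to isogredience and powers, the automorphism $\psi$ we started with, because $p''$ was built from the cancellation pattern of $\varphi^{k''}(v)$ against $wv$, which is governed by $w$.

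The main obstacle I anticipate is the bookkeeping in the second step: showing that the pure block $v'$ extracted from $v$ genuinely inherits a self-nesting relation of the form $\varphi^{k''}(v')=p''*v'*s''$, and that the conjugator $p''$ produced by Proposition~\ref{prop:genwordspart2} matches (a power of, and something isogredient to) the given $\psi = i_w\circ\varphi^k$ rather than some unrelated automorphism. This requires keeping simultaneous track of three things — the cancellation of $\varphi^k(v)$ against $wv$ on the left, the location of the last illegal pair of $V'$, and the singular-letter analysis of Lemma~\ref{lem:genwordspart1} applied to $v^{\pm1}$ — and arguing that after passing to a common power all three stabilize compatibly. Once that alignment is established, the conclusion is immediate from Proposition~\ref{prop:genwordspart2}, and the hypothesis $v_0\neq U_0$, $v_0\neq V_0$ is exactly what makes $(U'',V'')$ a genuinely new fixed point of $\partial^2\psi^h$, which is what feeds into the proof of Theorem~\ref{thm:twopoints}.
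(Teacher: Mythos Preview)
Your endgame is right: the proof does terminate by applying Proposition~\ref{prop:genwordspart2} to a pure word that nests inside its own $\varphi^k$-image, and the conjugator produced must be exactly $w$ so that the resulting fixed point is one of $\partial^2\psi^h$. But the route you sketch to get there has a genuine gap, and it is not the paper's route.

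The paper does \emph{not} try to show $\varphi^{k'}(v)=p'*v*s'$ for the whole word $v$; that factorization is never claimed and there is no obvious reason it should hold. Instead, the argument works only with the first maximal pure block $v_{(0)}$ of $v$ (so $v=v_{(0)}*v_{(1)}*\cdots*v_{(p)}$ with the $v_{(i)}$ alternating in sign). The key device is not illegal pairs but the count of \emph{orientation changes}: since $v*V'$ is fixed by $\partial\psi$ and $\varphi$ is $A_N$-positive, applying $\partial\varphi^k$ cannot increase this count, and since the result equals $w(v*V')$ with $w$ pure (Proposition~\ref{prop:purity}), the count cannot decrease either. Hence every pure block survives, and comparing the first block on both sides of $\partial\varphi^k(v*V')=w(v*V')$ gives four cases for what $wv_{(0)}$ can look like. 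Three of them force $w^{-1}$ to be pure positive, hence $v_0=V_0$, contradicting the normalization; the remaining case gives directly
\[
\varphi^k(v_{(0)}) \;=\; w*v_{(0)}*s
\]
with $s$ empty or pure positive, which is exactly the hypothesis of Proposition~\ref{prop:genwordspart2} with conjugator $p=w$. So the ``bookkeeping'' you worry about --- matching the conjugator to $w$ --- never arises: it falls out of the block comparison in one line.

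Your proposed mechanism of ``pushing illegal pairs far to the right by iterating'' does not do the job here. Orientation changes in $v*V'$ are not pushed anywhere by iterating $\varphi$; the pure blocks simply grow in place. What you need is the observation that the \emph{number} of orientation changes is an invariant of the fixed point under $\partial\varphi^k$, and that this invariant pins down the first block. Once you replace your step~2 with this orientation-change count, the extraction of the pure word and the identification of the conjugator become immediate, and the proof is short.
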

		\begin{proof}
		Define $v = v_{(0)}*v_{(1)}*\dots *v_{(p)}$, where for all $0\le i\le p$, $v_{(i)}$ is non empty and pure
		and for all $0\le i\le p-1$, $v_{(i+1)}$ is pure negative (resp. positive) if $v_{(i)}$ is pure positive (resp. negative).
		Since $\varphi$ is $A_N$-positive, the number of orientation changes (pairs $v_jv_{j+1}$ where $v_j\in A_N$ (resp. $v_j\in A_N^{-1}$)
		if $v_{j+1}\in A_N^{-1}$ (resp. $v_{j+1}\in A_N$)) can not increase when applying $\varphi$.

		We assume $v_{(0)}$ is pure positive; a similar approach can be used to prove the result when $v_{(0)}$ is pure negative.
		Obviously, the point $v*V'$ has a finite number of orientation changes and applying $\partial \psi$ to $v*V'$ does not
		change this number. Hence, considering $w$ is either empty or pure by proposition \ref{prop:purity}, we must have
		$\partial \varphi^k (v*V') = v_{(0)}'*v_{(1)}*\dots *v_{(p)}*V'$, where $v_{(0)}'$ is one of the following:
		\begin{itemize}
			\item $v_{(0)}' = \epsilon$,
			\item $v_{(0)}'$ is pure negative,
			\item $v_{(0)}'$ is pure positive and is a strict suffix of $v_{(0)}$,
			\item $v_{(0)}'$ is pure positive and $v_{(0)}$ is a suffix of $v_{(0)}'$.
		\end{itemize}
		We need to have $w^{-1}v_{(0)}' = v_{(0)}$.
		In the first three cases, the word $w^{-1}$ has to be non empty and pure positive.
		This means the first letter of $v_{(0)}$ must be equal to the first
		letter of $w^{-1}$. Since $\partial \psi (V) = V = w^{-1}*\partial \varphi^k(V)$, we obtain $v_0=V_0$, which contradicts
		our hypothesis.

		For the fourth case to be possible, the word $w^{-1}$ needs to be either empty or pure negative and
		we must have $\varphi^k(v_{(0)}) = w * v_{(0)} * s$, where $s$ is either empty or pure positive.
		We conclude with proposition \ref{prop:genwordspart2}.
		\end{proof}

		\subsubsection{Automorphisms with fixed subgroups of rank $1$}

		We start with a preliminary result regarding elements of fixed subgroups.
		\begin{prop}\label{prop:notpure}
		Let $\psi=i_w\circ \varphi^k$ be an automorphism of $\Phi_\infty$ and let $u$ be a non empty word of $F_N$
		such that $\psi(u) = u$. Then the word $u$ is not pure.
		\end{prop}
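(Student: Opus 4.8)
The plan is to argue by contradiction: suppose $u \ne \epsilon$, $\psi(u) = u$, and $u$ is pure — say pure positive (the pure negative case is symmetric, or follows by replacing $u$ with $u^{-1}$, which is also fixed). From $\psi = i_w \circ \varphi^k$ we get $w^{-1}\varphi^k(u)w = u$, i.e. $\varphi^k(u) = wuw^{-1}$. First I would record the easy structural consequence of $A_N$-positivity: since $u$ is pure positive, $\varphi^k(u)$ is pure positive, so $wuw^{-1}$ must be a pure positive (reduced) word. Writing $w$ in reduced form and looking at how the cancellations in $wuw^{-1}$ must resolve, the only way $wuw^{-1}$ can be pure positive is if $w$ itself is (a power of, or a prefix compatible with) $u$ in the appropriate sense — more precisely $w = u^m$ for some $m \in \mathds{Z}$, or $w$ cancels entirely into $u$ from one side. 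The cleanest formulation: $wuw^{-1}$ reduced and pure positive forces $w$ to commute with $u$ up to the reduction, and in a free group the centralizer of a nontrivial element is cyclic, so $w = z^m$ and $u = z^n$ for a common root $z$ and integers $m, n$ with $n \ne 0$; moreover $z$ is then pure positive.

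Next I would use the length/primitivity argument, exactly in the spirit of Proposition \ref{prop:notSper}. Since $w$ and $u$ are powers of the same pure positive $z$, we have $\varphi^k(u) = wuw^{-1} = u$ (the conjugation is trivial because $w$ and $u$ commute). So $u$ is genuinely fixed by $\partial\varphi^k$ and also pure positive. But $\varphi^k$ is $A_N$-positive and primitive, hence its $k$-th power sends the nonempty pure positive word $u$ to a strictly longer word: $|\varphi^{k\ell}(u)|$ grows without bound in $\ell$ (primitivity guarantees every generator appears, so there is genuine expansion). This contradicts $\varphi^k(u) = u$. Actually one does not even need to pass through the centralizer computation if one is slightly more careful: it suffices to observe that $|\varphi^k(u)| = |wuw^{-1}| \le |u| + 2|w|$ while iterating gives $|\varphi^{kj}(u)| \le |u| + 2j|w|$ at most linear growth, whereas primitivity forces exponential growth of $|\varphi^{kj}(u)|$ — contradiction. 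I would likely present this linear-vs-exponential version as it avoids case analysis on $w$.

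Let me reconcile the two routes and pick the sharper one. The bounded-cancellation / linear-growth argument needs a clean statement that a nonempty pure positive word has exponentially growing images under a primitive $A_N$-positive automorphism; this follows because the transition matrix of $\varphi$ is primitive with Perron eigenvalue $> 1$, and for a pure positive word no cancellation occurs under $\varphi$, so $|\varphi^j(u)|$ equals the corresponding entry-sum of $M_\varphi^j$ applied to the abelianized letter-count of $u$, which grows like $\lambda^j$. Meanwhile $|\varphi^{kj}(u)| = |w\, u\, w^{-1}|$ where the word $w\,u\,w^{-1}$ here is just the formal product; its reduced length is at most $|u| + 2|w|$, a constant. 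That is the contradiction, and it handles pure positive and pure negative $u$ uniformly (for pure negative $u$, apply $\varphi$ to $u^{-1}$, which is pure positive and also fixed by $\psi$).

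The main obstacle, and the step I would be most careful about, is the very first reduction: making sure that $\varphi^k(u) = wuw^{-1}$ as elements of $F_N$ really does pin down the reduced length of the right-hand side as $O(|w|)$ independently of $k$ — one must be sure not to confuse the reduced length of $wuw^{-1}$ (which is $\le |u| + 2|w|$ always) with anything $k$-dependent, and one must correctly invoke primitivity to get exponential growth of the left-hand side. A secondary point needing a line of justification is why "$u$ pure positive $\Rightarrow$ no cancellation in $\varphi^j(u)$": since every letter of $u$ is in $A_N$ and each $\varphi(a)$ is pure positive, the concatenation $\varphi(u_0)\varphi(u_1)\cdots$ is already reduced, and inductively so for higher powers. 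With these two observations in hand the contradiction is immediate, so the proof is short.
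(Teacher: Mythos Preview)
Your preferred linear-versus-exponential argument has a genuine gap: the iteration bound $|\varphi^{kj}(u)| \le |u| + 2j|w|$ is false. From $\psi(u)=u$ you correctly get $\varphi^k(u) = wuw^{-1}$, but iterating gives $\varphi^{kj}(u) = w_{(j)}\, u\, w_{(j)}^{-1}$ with $w_{(j)} = \varphi^{k(j-1)}(w)\cdots\varphi^k(w)\,w$, and $|w_{(j)}|$ itself grows exponentially in $j$ (since $\varphi$ is $A_N$-positive and primitive and $w$, if nonempty, contains a positive or negative letter whose image length blows up). So both sides of your inequality grow at the same exponential rate and no contradiction comes from comparing growth alone. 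In your final paragraph you even collapse the bound further to the constant $|u|+2|w|$, which is only the $j=1$ case.

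The centralizer route also does not go through as stated: $wuw^{-1}$ being pure positive does \emph{not} force $w$ to commute with $u$ or to share a root with $u$. For instance with $u=ab$ and $w=a^{-1}$ one has $wuw^{-1}=ba$, pure positive, yet $w$ and $u$ certainly do not commute and have no common root.

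What \emph{does} survive from your first route --- and what the paper exploits --- is a length equality rather than a commutation statement. Since $u$ and $\varphi^k(u)=wuw^{-1}$ are both pure positive, both are cyclically reduced; conjugate cyclically reduced words in $F_N$ are cyclic permutations of one another, hence $|\varphi^k(u)|=|u|$. The paper obtains exactly this equality by a direct cancellation analysis (working with $u^{2n}$ where $|u^n|>|w|$, so that $w$ must be pure and cancel entirely into one end of $u^{2n}$, giving $|wu^{2n}w^{-1}|=|u^{2n}|$), and then invokes primitivity --- passing to a higher power of $\psi$ if necessary so that $|\varphi^k(u)|>|u|$ --- to reach the contradiction. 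Your argument can be repaired along precisely these lines, but not via the growth-rate comparison you singled out as the cleaner version.
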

		\begin{proof}
		Suppose $u$ is pure and choose an integer $n$ such that $|u^n| > |w|$. The word $u$ is fixed by $\psi$,
		and the automorphism $\varphi$ is $A_N$-positive; this implies the word $\varphi^k(u^nu^n) = wu^nu^nw^{-1}$
		is also pure. This can only happen if $w$ is either empty or pure and if we have either $u^n = w^{-1}*u'$ for some non empty $u'\in F_N$
		or $u^n = u''*w$ for some non empty $u''\in F_N$. In both cases, we have $|wu^nu^nw^{-1}| = |u^nu^n|$.

		Recall that $\varphi$ is primitive. We can assume $k$ is large enough so that $|\varphi^k(u)| > |u|$ (otherwise
		we work with powers of $\psi$). We obtain $|\varphi^k(u^nu^n)| > |u^nu^n|$, which is impossible.
		\end{proof}

		For the end of this section, $\psi = i_w\circ \varphi^k$ is an automorphism of $\Phi_\infty$ with
		a fixed subgroup of rank (exactly) $1$ and (at least) one attracting point
		$X = v*V'$, with $v\in F_N$ and $V'\in \Sm\cup \Sp$. Let $u$ be a base of its fixed subgroup. As any automorphism
		in the isogredience class of $\psi$ also has a fixed group of rank (exactly) $1$ and has as many attracting points as $\psi$,
		we can assume (proposition \ref{prop:isomove}) $\psi$ was chosen so that $u$ is cyclically reduced (if $u_0$ and $u_p$ are its
		first and last letter respectively, we have $u_0\ne u_p^{-1}$), and $u_p\in A_N$ (resp. $u_p\in A_N^{-1}$) if $u_0\in A_N^{-1}$ (resp. $u_0\in A_N$)
		(this last condition is possible because $u$ is not pure).

		\begin{lem}\label{lem:case2purity}
		The word $w$ is empty or pure positive (resp. empty or pure negative) if $u_0\in A_N$ (resp. $u_0\in A_N^{-1}$).
		\end{lem}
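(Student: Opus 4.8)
The statement concerns $\psi = i_w\circ\varphi^k$ with a fixed subgroup of rank $1$ generated by a cyclically reduced word $u$ with $u_0$ and $u_p$ in opposite signs; we want to pin down the sign of $w$ from the sign of $u_0$. The natural approach is to exploit the relation $\psi(u)=u$, i.e. $\varphi^k(u) = w\,u\,w^{-1}$, combined with the $A_N$-positivity of $\varphi$ and Proposition~\ref{prop:notpure} (so $u$ is genuinely non-pure, and by the normalization it both starts and ends with letters of definite, opposite signs). I would argue by looking at the first and last letters of both sides of $\varphi^k(u) = w\,u\,w^{-1}$ and tracking how cancellation can (or cannot) occur.

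First I would write $\varphi^k(u) = \varphi^k(u_0)*\varphi^k(u_1)*\cdots*\varphi^k(u_p)$, which is cancellation-free because $u$ is reduced and $\varphi$ is $A_N$-positive (applying an $A_N$-positive automorphism to a reduced word produces no new cancellations beyond those already internal to each $\varphi^k(u_i)$, but each $\varphi^k(u_i)$ is pure so there are none). Hence $\varphi^k(u)$ begins with $\varphi^k(u_0)$ — which is pure positive if $u_0\in A_N$, pure negative if $u_0\in A_N^{-1}$ — and symmetrically ends with $\varphi^k(u_p)$, of the opposite sign (since $u_p$ has sign opposite to $u_0$). So $\varphi^k(u)$ is a reduced word whose leading block is of sign $\mathrm{sign}(u_0)$ and whose trailing block is of sign $-\mathrm{sign}(u_0)$; in particular it is not pure, and its leftmost orientation change sits strictly inside.

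Now compare with $w\,u\,w^{-1}$. By Proposition~\ref{prop:purity}-type reasoning (or directly, since $u$ is non-pure while $\varphi^k(u)$ is non-pure) $w$ must be pure or empty — I would either cite the relevant earlier purity proposition or give the short direct argument that a non-pure $w$ would force two incompatible factorizations of $w$. Assume $u_0\in A_N$ (the other case is symmetric). If $w$ were pure negative and non-empty, then the first letter of the reduced form of $w\,u\,w^{-1}$ would be the first letter of $w$ (no cancellation between $w$ and $u$ since $u$ starts positive), which is negative; but we showed $\varphi^k(u)$ starts with a positive letter — contradiction. Likewise $w=\epsilon$ and $w$ pure positive are both compatible with a positive leading letter, which is exactly the conclusion "$w$ empty or pure positive". (When $w$ is pure positive one checks $w^{-1}$ must be a prefix of $u$ and a suffix of $u$ is $w$, so that the cancellations in $w\,u\,w^{-1}$ land correctly; this uses that $u$ is cyclically reduced, and matches the bookkeeping already done in Proposition~\ref{prop:notpure}'s proof.)

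The main obstacle is making the cancellation analysis between $w$, $u$, and $w^{-1}$ airtight: one must rule out the scenario where $w$ is, say, pure positive but so long that cancelling $w^{-1}$ on the right eats past the positive part of $u$ into a region that changes the sign of the surviving leading letter. This is handled by using that $\varphi^k(u)=w\,u\,w^{-1}$ has the \emph{same length pattern of orientation changes} as $u$ (applying $\varphi$ cannot increase the number of orientation changes, and here the number is preserved since both sides are conjugate), which forces $w$ to be short relative to the pure blocks of $u$ at its ends and forces the leading block of $\varphi^k(u)$ to genuinely record $\mathrm{sign}(u_0)$. Once that is in place, reading off the first letter gives the sign of $w$ immediately.
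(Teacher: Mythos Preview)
Your approach---working directly with the relation $\varphi^k(u)=wuw^{-1}$ and reading off signs from first letters---is different from the paper's, and as written it has two real gaps.

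First, the sentence ``no cancellation between $w$ and $u$ since $u$ starts positive'' is false. If $w$ is pure negative, its last letter is some $a^{-1}$ with $a\in A_N$, and this cancels with $u_0\in A_N$ precisely when $a=u_0$; nothing rules that out. Worse, if $|w|$ is at most the length of the leading positive block of $u$, then $w$ can cancel entirely into $u$, so the first surviving letter need not be negative. Second, you assert that $w$ must be pure by ``Proposition~\ref{prop:purity}-type reasoning'', but that proposition requires $\partial^2\psi$ to fix a point of $\Sigma_\varphi$, which is exactly what has not yet been established at this stage; and your ``direct'' argument is not given. Your final paragraph correctly identifies the obstacle but does not resolve it: the remark that the two sides ``are conjugate'' is not a reason for orientation-change counts to match (conjugation does not preserve the linear count), and you never bound $|w|$ against the pure blocks of $u$.

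The paper sidesteps all of this by bringing in the attracting point $vV'$ and a large power $u^{2n}$. Passing to $u^{2n}$ with $|u^n|>|w|$ ensures cancellation cannot eat past the middle, and one may assume $w*u^{2n}$ (the cyclically reduced hypothesis forces at most one side to cancel). Then one looks at the \emph{infinite} point $u^n*u^nvV'$, which is fixed by $\partial\psi$, so $\partial\varphi^k(u^n*u^nvV')=w*u^n*u^nvV'$---and the $w^{-1}$ has disappeared. Now the orientation-change count is immediate: applying $\partial\varphi^k$ cannot increase it, and prepending $w*$ cannot decrease it, so $w$ contributes zero orientation changes and its last letter matches the sign of $u_0$. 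Your idea is salvageable if you replace $u$ by $u^{2n}$ and carry out the count carefully on both ends, but you would still be doing more bookkeeping than the paper, which uses $vV'$ precisely to kill the right-hand $w^{-1}$.
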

		\begin{proof}
		Choose an integer $n$ large enough so that we have both $|u^n| > |w|$ and $u^nu^{n}vV' = u^n*u^nvV'$.
		Since $u$ is fixed by $\psi$, we have $\varphi^k(u^nu^n) = w~u^nu^n~w^{-1}$. Moreover, since
		$u$ is cyclically reduced, we actually have either $\varphi^k(u^nu^n) = w*u^n*u^n~w^{-1}$
		or $\varphi^k(u^nu^n) = w~u^n*u^n*w^{-1}$ (or possibly both). We assume $\varphi^k(u^nu^n) = w*u^n*u^n~w^{-1}$
		(otherwise we work with $u^{-n}$).

		Both $vV'$ and $u^n*u^nvV'$ are attracting for $\partial \psi$, and we have
		$\partial \varphi^k(u^n*u^nvV') = w*u^n*u^nvV'$. Since $V'\in \Sm\cup \Sp$, there is a finite number of orientation changes
		in (the reduced form of) $u^n*u^nvV'$. On the one hand, applying $\partial \varphi$ can not increase the number of orientation changes
		since $\varphi$ is $A_N$-positive; on the other hand, $w*u^n*u^nvV'$ can not have less orientation changes than $u^n*u^nvV'$.
		We deduce $u^n*u^nvV'$ and $w*u^n*u^nvV'$ have the same number of orientation changes and conclude.
		\end{proof}

		Recall that $u_p$ is the last letter of $u$ and that $u_p\in A_N$ (resp. $u_p\in A_N^{-1}$) if $u_0\in A_N^{-1}$ (resp. $u_0\in A_N$).
		Define $u_{(0)}$ (resp. $u_{(q)}$) as the longest pure prefix (resp. suffix) of $u$. We get from lemma \ref{lem:case2purity}
		that $\varphi^k(u) = w*u*w^{-1}$ contains as many orientation changes as $u$
		and deduce $\varphi^k(u_{(0)}) = w*u_{(0)}*s$ for some empty or pure word $s$ and $\varphi^k(u_{(q)}^{-1}) = w*u_{(q)}^{-1}*s'$
		for some empty or pure word $s'$. Applying proposition \ref{prop:genwordspart2} twice then yields the following proposition.
		\begin{prop}\label{prop:case2}
		There exists an integer $h\ge 1$ such that $\partial^2 \psi^h$ has two attracting points $(U, V)$ and $(U'', V'')$ in $\Sv$
		such that $V_0 = u_0$ and $V_0'' = u_p^{-1}$ (resp. $U_0 = u_0$ and $U_0'' = u_p^{-1}$) if $u_0\in A_N$ (resp. $u_0\in A_N^{-1}$).
		\end{prop}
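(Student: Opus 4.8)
The plan is to reduce the statement to two applications of Proposition \ref{prop:genwordspart2}, one for the ``positive end'' of $u$ and one for its ``negative end''. Since $\psi=i_w\circ\varphi^k$ has a rank-one fixed subgroup generated by $u$, we have $\varphi^k(u)=w*u*w^{-1}$ after the normalizations already arranged (cyclic reduction of $u$, and $u_p$ of opposite sign to $u_0$). Assume $u_0\in A_N$, so by Lemma \ref{lem:case2purity} the word $w$ is empty or pure positive; the case $u_0\in A_N^{-1}$ is symmetric and handled by passing to inverses. Write $u=u_{(0)}*\dots*u_{(q)}$ as the maximal decomposition into non-empty pure blocks of alternating sign, so $u_{(0)}$ is the longest pure prefix of $u$ and $u_{(q)}$ is the longest pure suffix; here $u_{(0)}$ is pure positive (since $u_0\in A_N$) and $u_{(q)}$ is pure negative (since $u_p\in A_N^{-1}$).

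The first key step is to show that $\varphi^k(u_{(0)})=w*u_{(0)}*s$ with $s$ empty or pure positive. The point is orientation-change counting: the reduced word $u$ has exactly $q$ orientation changes, and $\varphi^k(u)=w*u*w^{-1}$ has at least $q$ of them (it contains $u$ as a factor after the cancellation-free concatenation with $w$ on the left and $w^{-1}$ on the right, using that $w$ is pure positive so the junction $w*u$ is at worst a same-sign junction); since $\varphi$ is $A_N$-positive, applying $\varphi^k$ cannot increase the number of orientation changes, so $\varphi^k(u)$ has exactly $q$ orientation changes. Consequently the image of the first pure block $u_{(0)}$ must itself stay pure after absorbing the prefix $w$: writing $\varphi^k(u)=\varphi^k(u_{(0)})*\varphi^k(u_{(1)})*\cdots$ and comparing with $w*u_{(0)}*u_{(1)}*\cdots*w^{-1}$, no new orientation change may be created inside $\varphi^k(u_{(0)})$ nor at its right junction, which forces $\varphi^k(u_{(0)})$ to be pure positive and, after the leftmost $w$ is matched, to have $u_{(0)}$ as a prefix; hence $\varphi^k(u_{(0)})=w*u_{(0)}*s$ with $s$ empty or pure positive. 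An entirely analogous argument applied to $u^{-1}$ (whose longest pure prefix is $u_{(q)}^{-1}$, pure positive) gives $\varphi^k(u_{(q)}^{-1})=w*u_{(q)}^{-1}*s'$ with $s'$ empty or pure positive.

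The second key step is purely mechanical: apply Proposition \ref{prop:genwordspart2} to the pure positive word $v:=u_{(0)}$ with $p:=w$, obtaining $k_1\ge 1$ and an attracting point $(U,V)\in\Sv$ of $\partial^2(i_w\circ\varphi^k)^{k_1}=\partial^2\psi^{k_1}$ with $V_0=(u_{(0)})_0=u_0$; then apply it again to the pure positive word $v:=u_{(q)}^{-1}$ with $p:=w$, obtaining $k_2\ge 1$ and an attracting point $(U'',V'')\in\Sv$ of $\partial^2\psi^{k_2}$ with $V_0''=(u_{(q)}^{-1})_0=u_p^{-1}$. Taking $h=k_1k_2$ (or any common multiple), both points are attracting for $\partial^2\psi^h$ and $V_0=u_0$, $V_0''=u_p^{-1}$, as required; when $u_0\in A_N^{-1}$ the same blocks are pure negative and Proposition \ref{prop:genwordspart2} produces the first coordinates $U_0,U_0''$ instead. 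I expect the orientation-change bookkeeping in the first step --- precisely, verifying that the equality $\varphi^k(u)=w*u*w^{-1}$ together with $A_N$-positivity forces $\varphi^k(u_{(0)})=w*u_{(0)}*s$ rather than merely $\varphi^k(u_{(0)})$ being a prefix of something longer --- to be the only delicate point; everything else is a direct invocation of the already-established propositions.
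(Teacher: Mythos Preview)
Your proposal is correct and follows essentially the same route as the paper: use Lemma~\ref{lem:case2purity} to get the purity of $w$, count orientation changes in $\varphi^k(u)=w*u*w^{-1}$ to deduce $\varphi^k(u_{(0)})=w*u_{(0)}*s$ and $\varphi^k(u_{(q)}^{-1})=w*u_{(q)}^{-1}*s'$, then apply Proposition~\ref{prop:genwordspart2} twice (to $\varphi^k$) and pass to a common power. The paper is terser about the orientation-change bookkeeping and about taking a common multiple of the two exponents, but the argument is the same.
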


		Observe that the word $u$ is also fixed by the automorphism $i_{wu^h}\circ \varphi^k$ for any integer $h$, although
		there are only a finite number of value for $h$ that yield an automorphism with a positive index ($h=0$ possibly being the only one).
		Interestingly enough, with lemma \ref{lem:case2purity} giving $wu^h = w*u^h$, one can easily prove, for any $h\ne 0$,
		$$\lim\limits_{n\to +\infty}u^{-h}w^{-1}\varphi^k(u^{-h}w^{-1})\dots \varphi^{kn}(u^{-h}w^{-1}) = \lim\limits_{n\to +\infty} u^{-n}.$$
		Note however, that such a point is not attracting (\cite[proposition I.1]{GJLL}).

		\subsubsection{Automorphisms with fixed subgroups of rank at least $2$}

		As in the previous section, the idea is to choose a correct automorphism to work with. However,
		with a fixed subgroup of rank more than $1$, the existence of such an automorphism is not as obvious,
		and requires a preliminary work.
		\begin{lem}
		Let $\psi_0$ be an automorphism of $\Phi_\infty$ with a fixed subgroup of rank at least $2$.
		There exists an automorphism $\psi$ in the isogredience class of $\psi_0$ and a word $u=u_0\dots u_p\in F_N$ such that:
		\begin{itemize}
			\item $(1)$ $u$ is fixed by $\psi$,
			\item $(2)$ $u$ is cyclically reduced,
			\item $(3)$ $u_p\in A_N$ (resp. $u_p\in A_N^{-1}$) if $u_0\in A_N^{-1}$ (resp. $u_0\in A_N$),
			\item $(4)$ no strict prefix or suffix of $u$ is fixed by $\psi$.
		\end{itemize}
		We will say $u$ is $\boldsymbol{\psi}$\textbf{-reduced}.
		\end{lem}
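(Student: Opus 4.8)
The plan is to produce the pair $(\psi,u)$ by a minimality argument, using Propositions \ref{prop:isomove} and \ref{prop:notpure} as the only real inputs. First I would record the behaviour of fixed subgroups under isogredience: for $v\in F_N$ one has $Fix(i_v\circ\chi\circ i_{v^{-1}})=v^{-1}\,Fix(\chi)\,v$, so the family of conjugacy classes represented in the fixed subgroup is an invariant of the isogredience class, and every automorphism isogredient to $\psi_0$ still lies in $\Phi_\infty$, so Proposition \ref{prop:notpure} applies to all of them. Combined with the ``similarly'' clause of Proposition \ref{prop:isomove}, this gives two moves available at the cost of replacing $\chi$ by an isogredient automorphism: (a) cyclic reduction (writing $g=v*(hv^{-1})$ with $h$ cyclically reduced, one gets $h$ fixed by $i_v\circ\chi\circ i_{v^{-1}}$), and (b) cyclic permutation of a cyclically reduced fixed word (writing $h=v*u$ and applying the proposition to get $uv$ fixed by $i_v\circ\chi\circ i_{v^{-1}}$, with $uv$ again cyclically reduced).

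Next I would consider the set $\mathcal{S}$ of pairs $(\chi,h)$ with $\chi$ isogredient to $\psi_0$, $h\in Fix(\chi)\setminus\{\epsilon\}$, $h$ cyclically reduced, and the first and last letters of $h$ of opposite sign (one in $A_N$, the other in $A_N^{-1}$). This set is nonempty: $Fix(\psi_0)$ has rank $\ge 2$, hence contains a nontrivial element; cyclically reducing it yields a cyclically reduced nontrivial fixed word of an isogredient automorphism, which by Proposition \ref{prop:notpure} is not pure and therefore has an orientation change in its cyclic word, and a suitable cyclic permutation moves that orientation change to the seam. I then pick $(\psi,u)\in\mathcal{S}$ with $|u|$ minimal; conditions $(1)$, $(2)$ and $(3)$ hold by construction, reading ``strict prefix/suffix'' in $(4)$ as ``non-empty proper prefix/suffix'' (the only meaningful reading, since $\epsilon\in Fix(\psi)$ always).

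For $(4)$, suppose some non-empty proper prefix $u'$ of $u$ satisfies $\psi(u')=u'$; the suffix case is equivalent, because if $u=u'*u''$ then $\psi(u'')=u''$ is equivalent to $\psi(u')=\psi(u)\psi(u'')^{-1}=u'$. Then $u'$ is a non-empty reduced word fixed by $\psi$ with $|u'|<|u|$. Cyclically reducing $u'$ and moving to an isogredient automorphism yields a cyclically reduced non-trivial fixed word $\hat u'$ with $|\hat u'|\le|u'|<|u|$; by Proposition \ref{prop:notpure} it is not pure, so a cyclic permutation of it, still within the isogredience class, has endpoints of opposite sign and length $\le|\hat u'|<|u|$. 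This produces an element of $\mathcal{S}$ whose fixed word is shorter than $u$, contradicting minimality. Hence no such $u'$ exists, $(4)$ holds, and $u$ is $\psi$-reduced.

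The step I expect to be the main obstacle is exactly the simultaneous realisation of $(3)$ and $(4)$: a naive ``shortest non-trivial fixed word'' need not be cyclically reduced with alternating-sign endpoints, and the cyclic permutations one would use to repair $(3)$ could a priori destroy $(4)$. The device that makes it work is to minimise directly over $\mathcal{S}$, i.e.\ over pairs already satisfying $(1)$--$(3)$, together with the observation that any shorter fixed prefix, after cyclic reduction, is again non-pure by Proposition \ref{prop:notpure} and can therefore be cyclically permuted back into $\mathcal{S}$ without increasing its length, so that a failure of $(4)$ would contradict minimality.
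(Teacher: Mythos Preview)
Your proof is correct and is essentially the paper's argument recast as a minimality argument: the paper picks an arbitrary fixed word, uses Proposition~\ref{prop:isomove} and Proposition~\ref{prop:notpure} to produce an isogredient pair satisfying (1)--(3), and if (4) fails takes the offending strict prefix or suffix (which is strictly shorter) and iterates; you instead minimise the length over the set $\mathcal{S}$ of pairs already satisfying (1)--(3) and derive a contradiction from a failure of (4). The two are logically the same descent, and your version has the minor expository advantage of making explicit why the operations used to restore (1)--(3) do not increase length.
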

		\begin{proof}
		Choose a word $u_{(0)}\in F_N$ that is fixed by $\psi_0$. Suppose $u_{(0)}$ is not $\psi_0$-reduced.
		It is easy to find a pair $(\psi_1, u_{(1)})$ that will verify the first three conditions (proposition \ref{prop:isomove}).
		If the fourth is also verified, we are done. If not, let $u_{(2)}$ be a strict prefix or suffix of $u_{(1)}$ that
		is fixed by $\psi_1$. The point here is that $|u_{(2)}| < |u_{(0)}|$. Hence, iterating the process
		will effectively yield a pair $(\psi, u)$ satisfying all four conditions.
		\end{proof}

		For the end of this section, $\psi = i_w\circ \varphi^k$ is an automorphism of $\Phi_\infty$ with a fixed subgroup of rank at least $2$.
		We suppose $\psi$ was chosen so that there exists a $\psi$-reduced word $u=u_0\dots u_p$. We choose a non empty word $v$ that is fixed by $\psi$
		and such that neither $u$ nor $u^{-1}$ is a prefix or a suffix of $v$.
		\begin{lem}\label{lem:case3safety}
		There exists a non empty word $v'''\in F_N$ such that $uvu^{-1} = u_0*v'''*u_0^{-1}$.
		\end{lem}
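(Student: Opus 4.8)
We have $\psi=i_w\circ\varphi^k$, a $\psi$-reduced word $u=u_0\dots u_p$, and a non-empty word $v$ fixed by $\psi$ with none of $u,u^{-1}$ a prefix or suffix of $v$. I want to show $uvu^{-1}=u_0*v'''*u_0^{-1}$ for some non-empty $v'''$; that is, when forming $uvu^{-1}$ the cancellations from the left never eat past the first letter $u_0$ of $u$, and symmetrically on the right, and what is left between is non-empty.

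First I would analyse the cancellation on the left, i.e. in the product $u\cdot v$. Write $u=u_{(0)}*u'$ where $u_{(0)}$ is the longest pure prefix of $u$ (condition $(3)$ guarantees $u_0$ and $u_p$ have opposite signs, so $u$ is genuinely non-pure and this is a strict decomposition). Since $v$ is fixed by $\psi=i_w\circ\varphi^k$ and $\varphi$ is $A_N$-positive, I would argue as in Lemma~\ref{lem:case2purity} and the surrounding discussion that $v$ and $\varphi^k(v)=wvw^{-1}$ have the same number of orientation changes, which forces the maximal pure prefix of $v$ to be \emph{positive if it starts at a positive letter}, and in general pins down the sign of $v$'s first and last letters. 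The key claim is that the first letter of $v$ cannot be $u_0^{-1}$: if it were, then (because $u$ is $\psi$-reduced, in particular cyclically reduced and with no fixed strict prefix) one could use the fact that both $u$ and $v$ lie in $Fix(\psi)$ together with Proposition~\ref{prop:notpure} to produce a strict prefix of $u$ fixed by $\psi$, contradicting condition $(4)$. Hence at most $|u|-1$ letters of $u$ cancel against $v$, leaving $u_0$ intact, so $uv=u_0*\tilde v$ for some $\tilde v$, and $\tilde v$ is non-empty because $u$ is not a prefix of $v$.

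Next I would run the mirror-image argument on the right: in $v\cdot u^{-1}$, the last letter of $v$ cannot be $u_p$ (which is $u_0^{-1}$'s opposite-sign partner — precisely the first letter of $u^{-1}$ read forward), again by appealing to the $\psi$-reducedness of $u$ (no fixed strict suffix) and Proposition~\ref{prop:notpure}; and $u^{-1}$ is not a suffix of $v$ by hypothesis. So $vu^{-1}=v'*u_0^{-1}$ with $v'$ non-empty. Combining the two one-sided statements: the cancellations in $uvu^{-1}$ from the left stop before consuming $u_0$ and those from the right stop before consuming $u_0^{-1}$, and since $v$ is non-empty and strictly "sticks out" past $u$ on both ends, the middle piece $v'''$ obtained after all cancellations is non-empty. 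This gives $uvu^{-1}=u_0*v'''*u_0^{-1}$.

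The main obstacle I expect is the case analysis establishing that the first letter of $v$ is not $u_0^{-1}$ (and the symmetric statement). The hypotheses available are exactly tailored for this — $v$ avoids having $u$ or $u^{-1}$ as a prefix/suffix, and $u$ is $\psi$-reduced so it has no fixed strict prefix or suffix — but turning "$v$ starts with $u_0^{-1}$" into an actual fixed strict prefix of $u$ requires care about how much of $u^{-1}$ matches an initial segment of $v$ and using that $u,v\in Fix(\psi)$ so products and quotients of them are fixed too; one has to make sure the resulting fixed word really is a \emph{strict} prefix of $u$ and non-empty, which is where the non-purity (condition $(3)$) and Proposition~\ref{prop:notpure} do the work. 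Everything else is bounded-cancellation bookkeeping of the kind already used repeatedly in this section.
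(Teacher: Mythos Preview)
Your overall shape---bound the cancellation in $uv$ on the left, then in the product with $u^{-1}$ on the right, and check the middle survives---matches the paper's. But your ``key claim'' is a red herring. You want at most $|u|-1$ letters of $u$ to cancel in $uv$; this is exactly the statement that $u^{-1}$ is not a prefix of $v$, which is a \emph{hypothesis}, so there is nothing to prove. The first letter of $v$ is irrelevant here (cancellation in $uv$ begins between the \emph{last} letter $u_p$ of $u$ and $v_0$), and in fact if $v_0=u_0^{-1}$ then by condition~(3) the letters $v_0$ and $u_p^{-1}$ have opposite signs, so there is no cancellation at all---the case you try hardest to exclude is harmless. Your sketched mechanism for turning $v_0=u_0^{-1}$ into a fixed strict prefix of $u$ via Proposition~\ref{prop:notpure} does not work, and the mirror claim about the last letter of $v$ has the same defect.

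There is also a case you do not cover. Writing $uv=u_0\dots u_{i-1}*v'$ with $v'$ a non-empty suffix of $v$ (non-emptiness of $v'$ uses condition~(4), since otherwise $v^{-1}$ would be a strict suffix of $u$ fixed by~$\psi$; it does not come from ``$u$ is not a prefix of $v$'' as you write), one then forms $v'u^{-1}=v''*u_{j-1}^{-1}\dots u_0^{-1}$, and $v''$ may well be empty. In that event $uvu^{-1}$ reduces to a product of one proper prefix of $u$ with the inverse of another, and one must still argue the result has the form $u_0*\cdots*u_0^{-1}$. The paper closes this by observing that $uvu^{-1}$ is non-trivial and fixed by $\psi$, hence by condition~(4) cannot be a strict prefix or suffix of $u$; your ``$v$ sticks out past $u$ on both ends'' does not address this situation.
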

		\begin{proof}
		Since $u$ is $\psi$-reduced and since $u^{-1}$ is not a prefix of $v$, there exists a non empty suffix $v'$ of $v$ and
		a positive integer $i$ such that $uv = u_0\dots u_{i-1}*v'$. Since $u$ is not a suffix of $v$, there is a
		possibly empty prefix $v''$ of $v'$ and a positive integer $j$ such that $v'u^{-1} = v''*u_{j-1}^{-1}\dots u_0^{-1}$.
		We conclude immediately if $v''$ is non empty. If $v''$ is empty, we first note that $uvu^{-1}$ can not be empty as
		$v\ne \epsilon$. We also recall that since $uvu^{-1}$ is fixed by $\psi$, it can not be a strict prefix or suffix of $u$
		(because $u$ is $\psi$-reduced) and we conclude.
		\end{proof}

		We are now able to proceed as in the previous section.
		\begin{lem}\label{lem:case3purity}
		The word $w$ is empty or pure positive (resp. empty or pure negative) if $u_0\in A_N$ (resp. $u_0\in A_N^{-1}$).
		\end{lem}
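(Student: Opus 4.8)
The plan is to mimic the proof of Lemma \ref{lem:case2purity} as closely as possible, replacing the single cyclically reduced generator $u$ with the word $uvu^{-1}$ supplied by Lemma \ref{lem:case3safety}, which by construction begins and ends with $u_0$ (more precisely $uvu^{-1} = u_0*v'''*u_0^{-1}$). The key point is that $uvu^{-1}$ is fixed by $\psi$, is ``cyclically opened'' at the letter $u_0$ on the left and at $u_0^{-1}$ on the right, and that its first letter $u_0$ is exactly the first letter of $u$, which is what the statement concerns.

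First I would fix an integer $n$ large enough that both $|(uvu^{-1})^n| > |w|$ and that the concatenation $(uvu^{-1})^n*(uvu^{-1})^n*X$ is cancellation free, where $X = v'''u_0^{-1}V'$ (or any attracting point $vV'$ of $\partial\psi$ with $V'\in\Sm\cup\Sp$; such a point exists since $\ind(\psi)>0$ forces an attracting point of $\partial\psi$, and by Proposition \ref{prop:outsideL} it lies in $L(\varphi)$, hence has the form $v*V'$). Since $uvu^{-1}$ is fixed by $\psi = i_w\circ\varphi^k$, we have $\varphi^k\big((uvu^{-1})^n(uvu^{-1})^n\big) = w\,(uvu^{-1})^n(uvu^{-1})^n\,w^{-1}$; because $uvu^{-1}$ starts with $u_0$ and ends with $u_0^{-1}$, the powers concatenate without cancellation, so in fact $\varphi^k\big((uvu^{-1})^{2n}\big) = w*(uvu^{-1})^n*(uvu^{-1})^n*w^{-1}$ (or the mirror factorization; pick whichever holds, working with the inverse if necessary, exactly as in Lemma \ref{lem:case2purity}).

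Then I would run the orientation-change counting argument verbatim: the point $(uvu^{-1})^n*(uvu^{-1})^n*X$ has finitely many orientation changes; applying $\partial\varphi^k$ cannot increase that number since $\varphi$ is $A_N$-positive; on the other hand prepending $w$ on the left and appending $w^{-1}$ on the right cannot decrease it. Hence the number of orientation changes is preserved, which forces $w$ (equivalently $w^{-1}$ on the right) to be empty or to consist of a single orientation type compatible with the first letter $u_0$ of $(uvu^{-1})^n$ — that is, $w$ is empty or pure positive when $u_0\in A_N$, and empty or pure negative when $u_0\in A_N^{-1}$.

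The main obstacle is the bookkeeping needed to guarantee that $(uvu^{-1})$ really opens up into $u_0 * \cdots * u_0^{-1}$ with no further cancellation when raised to powers and when juxtaposed with the attracting point $X$ — this is precisely why Lemma \ref{lem:case3safety} was proved first, and why we imposed on $v$ that neither $u$ nor $u^{-1}$ is a prefix or suffix of it, and on $u$ that it be $\psi$-reduced (conditions (2)–(4)). Once Lemma \ref{lem:case3safety} is in hand, the purity argument is a direct transcription of Lemma \ref{lem:case2purity}, and the only care required is the standard ``replace $u$ by $u^{-1}$ / replace $\psi$ by a power'' maneuvers to normalize which side the cancellation-free factorization sits on.
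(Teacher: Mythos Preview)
Your argument has two genuine gaps.

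First, the claim that ``since $uvu^{-1}$ starts with $u_0$ and ends with $u_0^{-1}$, the powers concatenate without cancellation'' is backwards: a word that starts with $u_0$ and ends with $u_0^{-1}$ cancels with an adjacent copy of itself, since the final $u_0^{-1}$ meets the initial $u_0$. In fact $(uvu^{-1})^n$ reduces to $uv^nu^{-1}$, so your long cancellation-free block never materialises and the orientation-change count cannot be set up as you describe.

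Second, you invoke an attracting point $X=v*V'$ of $\partial\psi$, but in the rank-$\ge 2$ case none is guaranteed: $\ind(\psi)=rk(Fix(\psi))+\tfrac12 a(\psi)-1$ is already positive when $rk(Fix(\psi))\ge 2$ and $a(\psi)=0$. (This is exactly why the paper splits the three cases.) So the appeal to Proposition~\ref{prop:outsideL} is unfounded here.

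The paper's proof sidesteps both problems by sandwiching $uvu^{-1}$ between $u^n$ and $u^{-n}$ instead of taking powers of $uvu^{-1}$: because $u$ is cyclically reduced, one has $u^n*u_0*v'''*u_0^{-1}*u^{-n}$ with no cancellation, and this \emph{finite} word is fixed by $\psi$. After arranging (via the usual ``replace $u$ by $u^{-1}$'' move) that $w*u^n$ is cancellation-free, the symmetry forces $u^{-n}*w^{-1}$ to be cancellation-free as well, and the orientation-change count applies directly to the finite identity
\[
\varphi^k(u^n*u_0*v'''*u_0^{-1}*u^{-n})=w*u^n*u_0*v'''*u_0^{-1}*u^{-n}*w^{-1},
\]
with no attracting point needed.
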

		\begin{proof}
		Choose an integer $n$ large enough so that we have $|u^n| > 2|w|$.
		Since $u$ is fixed by $\psi$, we have $\varphi^k(u^n) = w~u^n~w^{-1}$. Moreover, since
		$u$ is cyclically reduced, we actually have either $\varphi^k(u^n) = w*u^n~w^{-1}$
		or $\varphi^k(u^n) = w~u^n*w^{-1}$ (or possibly both). We assume $\varphi^k(u^n) = w*u^n~w^{-1}$
		(otherwise we work with $u^{-n}$). Using lemma \ref{lem:case3safety} then yields
		\begin{center}
			$\varphi^k(u^n*u_0*v'''*u_0^{-1}*u^{-n}) = w*u^n*u_0*v'''*u_0^{-1}*u^{-n}*w^{-1}$.
		\end{center}

		Again, applying $\varphi$
		can not increase the number of orientation changes, and the word $w*u^n*u_0*v'''*u_0^{-1}*u^{-n}*w^{-1}$ can not
		have less orientation changes than $u^n*u_0*v'''*u_0^{-1}*u^{-n}$. Hence, both words
		have the same number of orientation changes, and we conclude.
		\end{proof}

		Everything else is copied from the previous section, and we obtain the same proposition for this last type
		of automorphism.
		\begin{prop}\label{prop:case3}
		There exists an integer $h\ge 1$ such that $\partial^2 \psi^h$ has two attracting points $(U, V)$ and $(U'', V'')$ in $\Sv$
		such that $V_0 = u_0$ and $V_0'' = u_p^{-1}$ (resp. $U_0 = u_0$ and $U_0'' = u_p^{-1}$) if $u_0\in A_N$ (resp. $u_0\in A_N^{-1}$).
		\end{prop}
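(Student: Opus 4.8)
The plan is to reproduce the argument that yielded Proposition~\ref{prop:case2}, which is legitimate now that Lemmas~\ref{lem:case3safety} and~\ref{lem:case3purity} supply, in the rank $\ge 2$ setting, the two ingredients used there: the conjugator $w$ is empty or pure with sign dictated by $u_0$, and conjugating an independent fixed word $v$ by $u$ is cancellation-free at the extreme letters. So I would first record that, after replacing $u$ by $u^{-1}$ if necessary (which changes neither conditions $(2)$–$(3)$ nor the pair of points asserted in the statement), one has $\varphi^k(u) = w*u*w^{-1}$: indeed $\psi(u)=u$ gives $\varphi^k(u^n)=wu^nw^{-1}$, and by cyclic reducedness this is either $w*u^n*w^{-1}$ or $w\,u^n*w^{-1}$, the first of which forces $\varphi^k(u)=w*u*w^{-1}$, and the second is handled by passing to $u^{-1}$.

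Next I would single out the longest pure prefix $u_{(0)}$ and the longest pure suffix $u_{(q)}$ of $u$. Condition $(3)$ forces $u_0$ and $u_p$ to have opposite signs, so $u_{(0)}$ and $u_{(q)}^{-1}$ have the \emph{same} sign — both pure positive if $u_0\in A_N$, both pure negative if $u_0\in A_N^{-1}$ — with first letters $u_0$ and $u_p^{-1}$ respectively. Since $\varphi^k$ is $A_N$-positive it cannot increase the number of orientation changes, while by Lemma~\ref{lem:case3purity} the word $w*u*w^{-1}$ has exactly as many orientation changes as $u$ (this is where Lemma~\ref{lem:case3safety} does its work, preventing hidden cancellations when $w$ is a nontrivial pure word and $u$ is merely $\psi$-reduced). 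Following where the letters of $u_{(0)}$ and of $u_{(q)}$ land under $\varphi^k$ inside $w*u*w^{-1}$, this equality of orientation-change counts forces $\varphi^k(u_{(0)}) = w*u_{(0)}*s$ and $\varphi^k(u_{(q)}^{-1}) = w*u_{(q)}^{-1}*s'$ for possibly empty pure words $s,s'$, which is precisely the hypothesis of Proposition~\ref{prop:genwordspart2}.

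Finally I would invoke Proposition~\ref{prop:genwordspart2} twice, applied to the $A_N$-positive primitive automorphism $\varphi^k$ with $p=w$: once to the pure word $u_{(0)}$, giving $h_1\ge 1$ and an attracting point $(U,V)\in\Sv$ of $\partial^2(i_w\circ\varphi^k)^{h_1}=\partial^2\psi^{h_1}$ with relevant first letter $u_0$; once to the pure word $u_{(q)}^{-1}$, giving $h_2\ge 1$ and an attracting point $(U'',V'')\in\Sv$ of $\partial^2\psi^{h_2}$ with relevant first letter $u_p^{-1}$. Taking $h$ a common multiple of $h_1,h_2$, both remain attracting fixed points of $\partial^2\psi^h$, and they are distinct because $u_0\ne u_p^{-1}$ by cyclic reducedness of $u$. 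I expect the only real obstacle to be the orientation-change bookkeeping of the middle paragraph, i.e.\ checking that $w$ does not eat into the image of the pure prefix $u_{(0)}$ and that no cancellation occurs at the seam after it; once that is secured, the two applications of Proposition~\ref{prop:genwordspart2} and the passage to a common power are routine.
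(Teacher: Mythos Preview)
Your proposal is correct and is essentially the paper's own argument: the paper simply states that ``everything else is copied from the previous section'' and quotes Proposition~\ref{prop:case2}, i.e.\ one isolates the longest pure prefix $u_{(0)}$ and suffix $u_{(q)}$, uses Lemma~\ref{lem:case3purity} to get $\varphi^k(u_{(0)})=w*u_{(0)}*s$ and $\varphi^k(u_{(q)}^{-1})=w*u_{(q)}^{-1}*s'$, and applies Proposition~\ref{prop:genwordspart2} twice. Your extra care (the possible swap $u\leftrightarrow u^{-1}$, taking a common multiple of the two powers, and checking distinctness via $u_0\ne u_p^{-1}$) only makes explicit what the paper leaves implicit.
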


	\subsection{The singularity formula}

	We still assume $\varphi$ is an $A_N$-positive primitive automorphism, $\Sv$ is its attracting subshift,
	$\Phi^k$ is the outer class of $\varphi^k$ and $\Phi_\infty = \{i_w\circ \varphi^k;~w\in F_N,~k\in \mathds{N}^*\}$.

	Recall that $\ind(\Phi^k) = \sum\limits_{[\psi]} \max (0, \ind(\psi))$, where the sum is taken over all isogredience
	classes of $\Phi^k$. Choose an integer $k$ such that $\ind(\Phi^k) = \ind(\Phi_\infty)$. For each isogredience
	class of automorphisms with positive indices, propositions \ref{prop:case1}, \ref{prop:case2} and \ref{prop:case3} give us
	a representative $\psi$ such that $\partial^2 \psi$ fixes several points $\Sv$. The set of points of $\Sv$
	fixed by $\partial^2 \psi$ is defined as a singularity.

	\begin{defn}
	A \textbf{singularity} is a set $\Omega$ of (pairwise distinct) points of $\Sv$ satisfying the following conditions:
	\begin{itemize}
		\item $\Omega$ contains at least two elements,
		\item there exists an automorphism $\psi\in \Phi_\infty$ such that all the points of $\Omega$
		are fixed points of $\partial^2 \psi$,
		\item for any $h\in \mathds{N}^*$, if $(U, V)\in \Sv$ is a fixed point of $\partial^2 \psi^h$,
		then $(U, V)\in \Omega$,
		\item there exist two points $(U, V)$ and $(U', V')$ of $\Omega$ such that we have either
		$V_0\ne V_0'$ or $U_0\ne U_0'$ or both.
	\end{itemize}
	We say that the automorphism $\psi$ fixes the singularity $\Omega$.

	For any singularity $\Omega$, we define $H_\Omega$ (for Halves of points of $\Omega$) as the set
	of coordinates of points of $\Omega$:
	$$
		H_\Omega = \bigcup\limits_{(U, V)\in \Omega} \{U, V\}.
	$$
	\end{defn}

	Obviously, knowing that an automorphism $\psi\in \Phi_\infty$ fixes a singularity is (most often)
	not enough to deduce its index, as the singularity may not contain all the attracting points of $\partial \psi$
	and (apparently) does not tell us anything regarding the fixed subgroup of $\psi$.
	We are going to show that all the missing informations are actually contained in other singularities,
	which will be fixed by automorphisms in the isogredience class of $\psi$.

	Also, the reader should keep in mind that defining an automorphism which fixes a singularity
	implicitly declares it has a positive index.

		\subsubsection{The singularity graph}

		In \cite{CL}, M. Cohen and M. Lustig give an algorithm able to determine the
		fixed subgroup of an automorphism. The algorithm involves constructing a graph
		whose connected components represent the isogredience classes, and whose
		fundamental group represents the fixed subgroups. In this section, we obtain
		similar results. We adapt the definitions to highlight the importance of
		the singularities and to also include the attracting points. This will
		result in a way to compute the FO-index by only using singularities.

		\begin{defn}
		The \textbf{singularity graph} $\mathcal{G}$ associated to the automorphism $\varphi$ is the graph
		defined by the triplet $(\mathcal{N}, \mathcal{E}_f, \mathcal{E}_i)$ where:
		\begin{itemize}
			\item $\mathcal{N}$ is the set of nodes. It is the set of singularities.
			\item $\mathcal{E}_f$ is the set of finite edges. Finite edges are oriented and each one is
			labeled with a non empty pure positive word of $F_N$. There is a finite edge labeled $u$ from
			$\Omega$ to $\Omega'$, and we write it $(\Omega, \Omega', u)$, if
			\begin{itemize}
				\item there exist $(U, V)\in \Omega$ and $(U', V')\in \Omega'$ such that $(U, V) = u(U', V')$,
				\item for any strict prefix $v$ (resp. suffix $v'$) of $u$, no singularity contains $v^{-1}(U, V)$
				(resp. $v'(U', V')$).
			\end{itemize}
			\item $\mathcal{E}_i$ is the set of infinite edges. We don't require them to be oriented, but
			they are labeled with points of $\Sm\cup \Sp$. An infinite edge is anchored at a unique singularity.
			There is an infinite edge labeled $V\in H_\Omega$ anchored at $\Omega$, and we write it $(\Omega, V)$, if
			\begin{itemize}
				\item for any finite edge $(\Omega, \Omega', u)$, we have $V_0\ne u_0$ (where $V_0$ and
				$u_0$ are the first letters of $V$ and $u$ respectively),
				\item for any finite edge $(\Omega', \Omega, u)$, we have $U_0\ne u_p^{-1}$ (where $u_p$
				is the last letter of $u$).
			\end{itemize}
			It should be noted that if $(\Omega, V)$ and $(\Omega, V')$ are two infinite edges, then $V_0\ne V_0'$.
			Indeed, if $V$ and $V'$ are two points of $H_\Omega$ with a common prefix $u\ne \epsilon$, then
			there exists a finite edge $(\Omega, \Omega', u)$ (if $u$ is pure positive) or $(\Omega', \Omega, u^{-1})$
			(if $u$ is pure negative) and neither $V$ nor $V'$ can label an infinite edge.
		\end{itemize}
		The connected component containing a singularity $\Omega$ will be denoted $\mathcal{G}_\Omega$.
		\end{defn}

		A \textbf{path} in a singularity graph is a tuple $(\Omega_0, \dots, \Omega_h)$ of nodes such that, for any $0\le i < h$,
		there is a finite edge from $\Omega_i$ to $\Omega_{i+1}$ or from $\Omega_{i+1}$ to $\Omega_i$. We will say
		that this path starts at $\Omega_0$ and ends at $\Omega_h$. Note that there can be no
		edge from one singularity to itself. The path is called a \textbf{cycle} if $\Omega_h=\Omega_0$.
		It is a \textbf{trivial} cycle if for any $0\le i\le h$, we have $\Omega_i = \Omega_{h-i}$.

		We also define the \textbf{path map} $\gamma$ of the singularity graph $\mathcal{G}$:
		\begin{itemize}
			\item if $(\Omega_0, \Omega_1, u)$ is a finite edge of $\mathcal{G}$, then define $\gamma(\Omega_0, \Omega_1)=u$
			and $\gamma(\Omega_1, \Omega_0)=u^{-1}$,
			\item if $(\Omega_0, \dots, \Omega_h)$ is a path in $\mathcal{G}$, then define
			$\gamma(\Omega_0, \dots, \Omega_h)=\gamma(\Omega_0, \Omega_1)\dots \gamma(\Omega_{h-1}, \Omega_h)$.
		\end{itemize}
		It is important to note that, from the definition of finite edges, if $(\Omega_0, \Omega_1, \Omega_2)$
		is a path with $\Omega_0\ne \Omega_2$, then we actually have
		$\gamma(\Omega_0, \Omega_1, \Omega_2)=\gamma(\Omega_0, \Omega_1)*\gamma(\Omega_1, \Omega_2)$.
		In particular, if $\gamma(\Omega_0, \dots, \Omega_h)=\epsilon$, then the path $(\Omega_0, \dots, \Omega_h)$ is a trivial cycle.

		Lastly, we loosely use the notation $\pi_1(\mathcal{G})$ to refer to the fundamental group of the topological graph
		associated to $\mathcal{G}$.

		\begin{thm}\label{thm:graphindex}
		Let $\varphi$ be an $A_N$-positive primitive automorphism.
		Let $k$ be an integer such that $\ind(\Phi_\infty) = \ind(\Phi^k)$ (where $\ind(\Phi_\infty)$ is the FO-index of $\varphi$
		and $\Phi^k$ is the outer class of $\varphi^k$) and let $\psi$ be an automorphism
		of $\Phi^k$ that fixes a singularity $\Omega$. Let $\mathcal{G}$ the singularity graph of $\varphi$
		and $\mathcal{G}_\Omega$ the connected component of $\mathcal{G}$ containing $\Omega$.
		\begin{itemize}
			\item $(1)$ We have $rk(\pi_1(\mathcal{G}_\Omega)) = rk(Fix(\psi))$.
			\item $(2)$ The number of infinite edges of $\mathcal{G}_\Omega$ is equal to the number of equivalence
			classes of attracting points of $\partial \psi$.
		\end{itemize}
		\end{thm}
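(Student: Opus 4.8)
The plan is to build an explicit bijection-style dictionary between, on the one hand, the isogredience data of the automorphism $\psi$ (its fixed subgroup and the equivalence classes of its attracting points), and on the other hand, the combinatorial data of the connected component $\mathcal{G}_\Omega$ (its fundamental group and its infinite edges). The first step is to identify which singularities belong to $\mathcal{G}_\Omega$: I would show that a singularity $\Omega'$ lies in $\mathcal{G}_\Omega$ if and only if $\Omega'$ is fixed by some automorphism in the isogredience class of $\psi$. One direction follows because a finite edge $(\Omega_i,\Omega_{i+1},u)$ records a relation $(U,V)=u(U',V')$ between a fixed point of $\partial^2\psi_i$ and a fixed point of $\partial^2\psi_{i+1}$, and proposition~\ref{prop:isomove} together with proposition~\ref{prop:freetocomb} shows $\psi_{i+1}=i_{u}\circ\psi_i\circ i_{u^{-1}}$ (up to taking a power, harmless for the FO-index); composing along a path gives the claim via the path map $\gamma$. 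The converse — that every singularity fixed by an automorphism isogredient to $\psi$ is actually reachable from $\Omega$ by a path — is where I expect the real work: one must show that whenever two singularities $\Omega,\Omega'$ are fixed by $\psi$ and $i_v\circ\psi\circ i_{v^{-1}}$ respectively, the word $v$ can be factored as a product of the labels of finite edges, i.e. that the minimality condition in the definition of finite edges (no intermediate strict prefix lands in a singularity) does not obstruct connectivity. This should follow from propositions~\ref{prop:case2} and~\ref{prop:case3}: any word conjugating one fixed configuration to another is pure on each ``syllable'' and each maximal pure stretch can be cut at the successive singularities it passes through.

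For part $(2)$, I would argue that the attracting points of $\partial\psi$ that lie in $\Sv$ are exactly the points of the singularities in $\mathcal{G}_\Omega$ (by theorem~\ref{thm:twopoints} and corollary~\ref{cor:negativeindex}, no attracting point lies outside $L(\varphi)$, and by propositions~\ref{prop:isomove}, \ref{prop:uni-bi} every attracting half can, after moving within the isogredience class and passing to a power, be realized as a coordinate of a point of $\Sv$ fixed by the corresponding automorphism). Two such attracting points $X$ and $X'$ are $Fix(\psi)$-equivalent precisely when there is $g\in Fix(\psi)$ with $gX=X'$; translating through the conjugations encoded by finite edges, $g$ corresponds to a closed loop in $\mathcal{G}_\Omega$, so the $Fix(\psi)$-equivalence classes of attracting halves in $\Sv$ are in bijection with the $\pi_1(\mathcal{G}_\Omega)$-orbits of the ``free ends'' of the graph. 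The point of the edge bookkeeping is that a coordinate $V\in H_{\Omega'}$ labels an infinite edge exactly when $V_0$ is not the first letter of any finite-edge label at $\Omega'$ — i.e. exactly when $V$ is not identified, via a conjugation in the isogredience class, with a coordinate of a point in another singularity. Hence each infinite edge is a genuine new attracting class, and conversely every attracting class of $\partial\psi$ is represented by exactly one infinite edge of $\mathcal{G}_\Omega$; a short argument is needed to rule out that two infinite edges at different nodes give the same class, which is handled by the loop/fundamental-group description.

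For part $(1)$, I would relate $Fix(\psi)$ to $\pi_1(\mathcal{G}_\Omega)$ by a covering-space / Bass–Serre type argument. Consider the universal cover $\widetilde{\mathcal{G}_\Omega}$; lifting the singularities and finite edges, $Fix(\psi)$ acts on this tree, freely (an element fixing a lifted singularity would, via the correspondence above, be an inner automorphism fixing $\psi$, hence trivial since $\psi$ has been chosen $\psi$-reduced in the rank $\geq 2$ case and there are no short fixed prefixes), with quotient exactly $\mathcal{G}_\Omega$; therefore $Fix(\psi)\cong\pi_1(\mathcal{G}_\Omega)$ and the ranks agree. The key lemma feeding this is that the finite edges give a complete set of relations: if $w$ is a cyclically reduced fixed word of $\psi$ then, reading its successive pure syllables and the singularities they pass through, $w=\gamma$ of a cycle in $\mathcal{G}_\Omega$, and conversely every cycle's $\gamma$-value is fixed by $\psi$ (immediate from the edge relations) — here the remark that $\gamma$ of a non-trivial reduced cycle is a nonempty reduced word, together with proposition~\ref{prop:notpure} forbidding pure fixed words, guarantees the map from reduced cycles to $Fix(\psi)$ is injective. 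The main obstacle, as flagged, is the converse connectivity/completeness statement: proving that the finite edges see \emph{all} the isogredience-internal identifications and \emph{all} the relations in $Fix(\psi)$, with nothing escaping because of the ``minimal prefix'' normalization in the edge definition. I would isolate this as a separate lemma and prove it by induction on the length of the conjugating (resp. fixed) word, cutting at the first singularity encountered.
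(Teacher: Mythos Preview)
Your proposal is on the right track and its core coincides with the paper's own proof: both arguments establish $(1)$ by showing that the path map $\gamma$ sends reduced cycles based at $\Omega$ bijectively onto $Fix(\psi)$, via an induction that ``consumes'' a fixed word by cutting it at the first singularity it meets, and both prove $(2)$ by a parallel cutting argument applied to an arbitrary attracting point $X = v*V$. You also correctly isolate the delicate point --- that every conjugating or fixed word really does factor through the finite edges --- as a separate lemma to be proved by induction on length. Two issues, however, deserve correction.

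First, you cite the wrong propositions for the cutting step. Propositions~\ref{prop:case2} and~\ref{prop:case3} concern automorphisms with nontrivial fixed subgroups and produce two attracting points whose first letters are $u_0$ and $u_p^{-1}$ for a given fixed word $u$; they do not let you advance through an arbitrary intermediate position. The tool that actually drives the induction --- and the one the paper uses throughout both $(1)$ and $(2)$ --- is proposition~\ref{prop:case1}: given a fixed point $(U,V)\in\Omega$ and another attracting point whose first letter $v_0$ differs from both $U_0$ and $V_0$, it produces a new fixed point of $\partial^2\psi^h$ in $\Sv$ whose relevant coordinate begins with $v_0$. Combined with the hypothesis $\ind(\Phi^k)=\ind(\Phi_\infty)$ (which forces $h=1$, i.e.\ the new point is already fixed by $\psi$ and hence lies in a genuine singularity), this is exactly what guarantees that after stripping the maximal prefix of $u$ agreeing with $U$ or $V$, the conjugated automorphism again fixes a singularity, so the induction continues.

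Second, the Bass--Serre/covering-space wrapper you propose for $(1)$ is unnecessary and, as stated, slightly circular: the deck group of the universal cover of $\mathcal{G}_\Omega$ is $\pi_1(\mathcal{G}_\Omega)$ by definition, so letting $Fix(\psi)$ act on it already presupposes the isomorphism you want. Your freeness justification also invokes $\psi$ being ``$\psi$-reduced'', a condition introduced only locally in the proof of proposition~\ref{prop:case3} and not assumed here. The paper bypasses all of this with a direct injectivity check: since $\gamma(c)=\epsilon$ forces the cycle $c$ to be trivial (an immediate consequence of the cancellation-free concatenation in the definition of finite edges), independent cycles map to independent elements of $Fix(\psi)$. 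Once you replace the citations with proposition~\ref{prop:case1} and drop the covering-space layer in favor of this elementary observation, your argument becomes the paper's.
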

		\begin{proof}
		We start by proving the following lemma.
			\begin{lem}\label{lem:insidethm}
			If $(\Omega, \Omega_1, \dots, \Omega_h)$ is a path in $\mathcal{G}_\Omega$, then for any point $V\in H_{\Omega_h}$,
			the point $\gamma(\Omega, \Omega_1, \dots, \Omega_h)V$ is fixed by $\partial \psi$.
			\end{lem}
			\begin{proof}
			Let $(\Omega, \Omega_1, u)$ (resp. $(\Omega_1, \Omega, v)$) be a finite edge in $\mathcal{G}_\Omega$. We get from the definition
			of finite edges, and from proposition \ref{prop:isomove}, that any point $V$ of $H_{\Omega_1}$ is fixed by $\partial (i_u\circ \psi\circ i_{u^{-1}})$
			(resp. $\partial (i_{v^{-1}}\circ \psi\circ i_v)$). In both cases, any point of $H_{\Omega_1}$ is fixed by
			$\partial (i_{\gamma(\Omega, \Omega_1)}\circ \psi\circ i_{(\gamma(\Omega, \Omega_1))^{-1}})$. Recall for any two words $u_{(0)}$ and $u_{(1)}$ of $F_N$, we have
			$i_{u_{(1)}}\circ i_{u_{(0)}} = i_{u_{(0)}u_{(1)}}$. Hence, by induction, if $(\Omega, \Omega_1, \dots, \Omega_h)$ is
			a path, then any point of $H_{\Omega_h}$ is fixed by
			$\partial (i_{\gamma(\Omega, \Omega_1, \dots, \Omega_h)}\circ \psi \circ i_{(\gamma(\Omega, \Omega_1, \dots, \Omega_h))^{-1}})$,
			and we conclude.
			\end{proof}
			\begin{cor}\label{cor:concom}
			Singularities of $\mathcal{G}_\Omega$ are fixed by automorphisms in the isogredience class of $\psi$. Conversely,
			if $\psi'$ is in the isogredience class of $\psi$ and fixes a singularity $\Omega'$, then $\Omega'$ is a node of $\mathcal{G}_\Omega$.
			\end{cor}
		We now prove proposition $(1)$. It is clear from the lemma above that for any cycle $c$ starting and ending at $\Omega$, the word
		$\gamma(c)$ is fixed by $\psi$.
		
		Conversely, suppose $u$ is a non empty word of $F_N$
		that is fixed by $\psi$. Recall that $\psi$ fixes the singularity $\Omega$; for any point $X$ of $H_\Omega$, the point
		$uX$ is fixed by $\partial \psi$. According to proposition \ref{prop:case1}, there is a point $(U, V)\in \Omega$ and a non empty
		prefix $u_{(0)}$ of $u$ such that $u = u_{(0)}*u'$ and $u_{(0)}$ is a prefix of either $U$ or $V$. Suppose $u_{(0)}$ is the largest prefix of 
		$u$ that is also a prefix of $U$ or $V$. Define $\psi' = i_{u_{(0)}}\circ \psi\circ i_{u_{(0)}^{-1}}$ and $(U', V')=u_{(0)}^{-1}(U, V)$. The point $(U', V')$ is fixed by
		$\partial^2 \psi'$, the point $u'X$ is fixed by $\partial \psi'$, and we have $u'_0\ne V'_0$ and $u'_0\ne U'_0$. We deduce from proposition \ref{prop:case1}
		(and from the fact that $\psi'\in \Phi^k$ with $\ind(\Phi_\infty) = \ind(\Phi^k)$) that $\psi'$ fixes a singularity $\Omega'$.
		Note that due to the definition of finite edges, there might not be a finite edge between $\Omega$ and $\Omega'$;
		there is, however, a path joining them. We can now iterate the process, and each new iteration will consume the word $u$ until we eventually fall back to $\Omega$.
		We obtain a cycle $c$ starting and ending at $\Omega$ such that $\gamma(c) = u\ne \epsilon$, ensuring
		the cycle is not trivial.

		Finally, recall that $\gamma (c) = \epsilon$ (where $c$ is a cycle) if and only if $c$ is a trivial cycle. Hence, if $\{c_0, \dots, c_h\}$
		is a set of independant cycles all starting and ending at $\Omega$, then $\{\gamma(c_0), \dots, \gamma(c_h)\}$ is a set of independant
		generators of $Fix(\psi)$, and conversely. We obtain the following important theorem, which also concludes the proof of proposition $(1)$.
			\begin{thm}\label{thm:insidethm}
			Suppose $rk(\pi_1(\mathcal{G}_\Omega)) = h+1$ and let $\{c_0, \dots, c_h\}$ be a set of independant
			cycles of $\mathcal{G}_\Omega$ all starting and ending at $\Omega$. Then $\{\gamma(c_0), \dots, \gamma(c_h)\}$ is a base of $Fix(\psi)$.
			\end{thm}
		We move on to proposition $(2)$. We start by showing that every equivalence class of attracting points is represented
		by an infinite edge. Let $X$ be an attracting point of $\partial \psi$. Recall from corollary \ref{cor:negativeindex} that we must have
		$X = v*V$ for some word $v\in F_N$ and point $V\in \Sm\cup \Sp$. We suppose $V\in \Sp$; the other case is proved in a similar way.
		Considering $\psi\in \Phi^k$ and $\ind(\Phi^k)=\ind(\Phi_\infty)$, if $(U, V)\in \Sv$, then $v(U, V)$ is fixed by $\partial^2 \psi$.
		If $v(U, V)$ is a point of $\Sv$, we simply define $(U', V') = v(U, V)$ and $\psi_1 = \psi$. If not,
		let $v_{(1)} = v_h\dots v_p$ be the longest suffix of $v$ such that $v_p^{-1}\dots v_h^{-1}$ is a prefix of either $U$ or $V$.
		Define $v_{(0)} = v_0\dots v_{h-1}$ and $(U', V') = v_h\dots v_p(U, V)$.
		Observe $(U', V')$ is a point of $\Sv$ and we have $v_{h-1}\ne U_0'$ and $v_{h-1}\ne V_0'$.
		The automorphism $\psi_1 = i_{v_{(0)}}\circ \psi\circ i_{v_{(0)}^{-1}}$ is such that $\partial^2 \psi_1$ has
		a fixed point $(U', V')$ in $\Sv$ and $\partial \psi_1$ has a fixed point $v_{h-1}^{-1}\dots v_0^{-1}Y$ (for any $Y$
		in $H_\Omega$). Applying proposition \ref{prop:case1} tells us $\psi_1$ fixes a singularity $\Omega_1$
		which is in $\mathcal{G}_\Omega$ by corollary \ref{cor:concom}.
		
		Now, if there is a finite edge $(\Omega_1, \Omega_2, u)$ where $u$ is a prefix of $V'$ (note that by definition of
		finite edges, if $u_0 = V_0'$, then $u$ is a prefix of $V'$), then we define $(U'', V'') = u^{-1}(U', V')$
		and $\psi_2 = i_{u}\circ \psi_1\circ i_{u^{-1}}$.
		Iterate this process for as long as it is possible; this can only happen a finite number of times since there is a finite number of singularities
		(the index of an automorphism is finite) and since for any cycle $c$, the word $\gamma(c)$ can not be pure positive (from proposition \ref{prop:notpure}).
		We end up with an automorphism $\psi_n$ that fixes a singularity $\Omega_n$ of $\mathcal{G}_\Omega$ and such that $\partial \psi_n$ fixes
		a point $V^{(n)}\in \Sp$, with $V^{(n)} = v'V$ for some $v'\in F_N$. In addition, for any finite edge $(\Omega_n, \Omega_{n+1}, u')$, the word
		$u'$ is not a prefix of $V'$ and we conclude that $(\Omega_n, V^{(n)})$ is an infinite edge of $\mathcal{G}_\Omega$.

		The fact that an infinite edge represents an equivalence class of attracting points is clear from lemma \ref{lem:insidethm}.
		Finally, we deduce from proposition $(1)$ of this theorem, and from the definition of finite edges that
		an equivalence class can not be represented more than once, and we conclude.
		\end{proof}

		\subsubsection{Redefining the index}

		Constructing the singularity graph effectively gives us all the information needed to compute the FO-index of
		an automorphism. It also greatly details the structure of the fixed subgroups and the equivalence classes
		of attracting points. The singularity graph may actually be considered a little heavy to manipulate in case we're only interested
		in the FO-index. We show here that we can determine the FO-index with the singularities alone, regardless of
		the structure of the singularity graph.

		For two points $V, V'$ (resp $U, U'$) of $\Sp$ (resp. $\Sm$), we denote $V\sim V'$ (resp. $U\sim U'$)
		if $V_0 = V_0'$ (resp. $U_0=U_0'$).

		\begin{thm}
		The FO-index of $\varphi$ can be obtained by using the formula
		$$
			\ind(\Phi_\infty) = \frac{1}{2} \sum\limits_{\Omega} (\# (H_\Omega/\sim) - 2)
		$$
		where the sum is taken over all singularities.
		\end{thm}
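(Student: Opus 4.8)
The plan is to reduce this "global" FO-index formula to the connected-component formula already established in Theorem~\ref{thm:graphindex}. The key observation is that the singularity graph $\mathcal{G}$ decomposes into connected components, each component $\mathcal{G}_\Omega$ corresponding (by Corollary~\ref{cor:concom}) to exactly one isogredience class of automorphisms of $\Phi^k$ with positive index, where $k$ is chosen so that $\ind(\Phi^k)=\ind(\Phi_\infty)$. So the right-hand side sum over all singularities can be reorganized as a double sum: first over connected components $\mathcal{G}_\Omega$, then over the singularities $\Omega'$ belonging to that component. The goal is to show that for a fixed component, $\frac12\sum_{\Omega'\in\mathcal{G}_\Omega}(\#(H_{\Omega'}/\!\sim)-2)$ equals $\ind(\psi)$, where $\psi$ is the representative automorphism fixing $\Omega$; summing over components then gives $\sum_{[\psi]}\ind(\psi)=\ind(\Phi^k)=\ind(\Phi_\infty)$.

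**First I would** fix a connected component $\mathcal{G}_\Omega$ with underlying automorphism $\psi$, and recall from Theorem~\ref{thm:graphindex} that the number $a(\psi)$ of equivalence classes of attracting points equals the number of infinite edges of $\mathcal{G}_\Omega$, and $rk(Fix(\psi)) = rk(\pi_1(\mathcal{G}_\Omega))$. By the standard Euler characteristic identity for a connected graph, $rk(\pi_1(\mathcal{G}_\Omega)) = \#\mathcal{E}_f(\mathcal{G}_\Omega) - \#\mathcal{N}(\mathcal{G}_\Omega) + 1$, where $\mathcal{E}_f$ counts finite edges and $\mathcal{N}$ counts nodes (singularities). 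Therefore
$$
\ind(\psi) = rk(Fix(\psi)) + \tfrac12(a(\psi)-2) = \#\mathcal{E}_f(\mathcal{G}_\Omega) - \#\mathcal{N}(\mathcal{G}_\Omega) + 1 + \tfrac12 \#\mathcal{E}_i(\mathcal{G}_\Omega) - 1.
$$
The heart of the proof is then a counting lemma: for each singularity $\Omega'$ in $\mathcal{G}_\Omega$, the quantity $\#(H_{\Omega'}/\!\sim)$ equals (number of finite edge-ends at $\Omega'$) $+$ (number of infinite edges anchored at $\Omega'$). This is because each element of $H_{\Omega'}/\!\sim$ is an equivalence class of coordinates sharing a first letter, and by the definition of the singularity graph, such a class either has a common nonempty prefix shared with a coordinate of a neighboring singularity — giving a finite edge — or it does not, in which case it labels an infinite edge; moreover the remarks in the definition of $\mathcal{E}_i$ ensure distinct classes give distinct edges and conversely. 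Summing this over all $\Omega'\in\mathcal{G}_\Omega$, each finite edge is counted exactly twice (once per endpoint) and each infinite edge exactly once, yielding $\sum_{\Omega'}\#(H_{\Omega'}/\!\sim) = 2\#\mathcal{E}_f(\mathcal{G}_\Omega) + \#\mathcal{E}_i(\mathcal{G}_\Omega)$, while $\sum_{\Omega'} 2 = 2\#\mathcal{N}(\mathcal{G}_\Omega)$.

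**Combining** the two displays gives $\frac12\sum_{\Omega'\in\mathcal{G}_\Omega}(\#(H_{\Omega'}/\!\sim)-2) = \#\mathcal{E}_f(\mathcal{G}_\Omega) + \tfrac12\#\mathcal{E}_i(\mathcal{G}_\Omega) - \#\mathcal{N}(\mathcal{G}_\Omega) = \ind(\psi)$, exactly as needed. Summing over the components of $\mathcal{G}$, and invoking Corollary~\ref{cor:concom} together with the fact (from the propositions of Section~\ref{sec:attsubcontainsall}, especially the remark that defining an automorphism fixing a singularity declares positive index) that components are in bijection with isogredience classes of positive index in $\Phi^k$, we get $\frac12\sum_\Omega(\#(H_\Omega/\!\sim)-2) = \sum_{[\psi]}\max(0,\ind(\psi)) = \ind(\Phi^k) = \ind(\Phi_\infty)$.

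**The main obstacle** I anticipate is the counting lemma identifying $\#(H_{\Omega'}/\!\sim)$ with the local edge count: one must be careful that the correspondence between $\sim$-classes of half-coordinates and incident edges is genuinely a bijection. In particular, one needs the already-noted facts that two coordinates of $H_{\Omega'}$ sharing a nonempty common prefix force a finite edge (so they cannot both anchor infinite edges, and they contribute a single $\sim$-class corresponding to a single edge-end), and that two finite edges at $\Omega'$ cannot correspond to the same first letter — this follows from proposition~\ref{prop:case1}-type arguments and the maximality built into the definition of finite edges (no strict prefix gives a singularity). A second subtlety is checking that a single isogredience class cannot give rise to two distinct connected components and that no component is "spurious" (every component's representative really does have positive index); this is handled by Corollary~\ref{cor:concom}, but the direction "$\psi$ fixes $\Omega'\in\mathcal{G}_\Omega$ $\Rightarrow$ $\psi$ isogredient to the representative of $\mathcal{G}_\Omega$" must be invoked explicitly. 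The rest is bookkeeping with the Euler characteristic.
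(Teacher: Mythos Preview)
Your proposal is correct and follows essentially the same approach as the paper: both decompose the singularity graph into connected components, use the Euler-characteristic identity $rk(\pi_1(\mathcal{G}_\Omega))=\#\mathcal{E}_f-\#\mathcal{N}+1$, invoke Theorem~\ref{thm:graphindex}, and rely on the key counting observation that $\#(H_{\Omega'}/\!\sim)$ equals the number of finite edges adjacent to $\Omega'$ plus the number of infinite edges anchored there (with finite edges counted twice in the sum). The paper's proof is terser---it states the counting observation as ``easy to check'' and leaves the final summation over components implicit---whereas you spell out both the counting lemma and the use of Corollary~\ref{cor:concom} to match components with isogredience classes, but the argument is the same.
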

		\begin{proof}
		Choose an integer $k$ such that $\ind(\Phi_\infty) = \ind(\Phi^k)$.
		Let $\mathcal{G}$ be the singularity graph, let $\Omega_0$ be a singularity fixed by $\psi_0\in \Phi^k$
		and let $\mathcal{G}_{\Omega_0}$ be the connected component of $\mathcal{G}$
		containing $\Omega_0$. Suppose $\mathcal{G}_{\Omega_0}$ contains $n$ nodes and $m$ finite edges, and observe that
		$rk(\pi_1(\mathcal{G}_{\Omega_0}))$ is simply given by $m-(n-1)$.

		We say that a finite edge of $\mathcal{G}$ is adjacent to a node $\Omega$ if there exist $u$ and $\Omega'$
		such that either $(\Omega, \Omega', u)$ or $(\Omega', \Omega, u)$ is a finite edge of $\mathcal{G}$.
		For any singularity $\Omega$ of $\mathcal{G}_{\Omega_0}$, it is easy to check that we have
		$$
			\#(H_\Omega/\sim) = \#(infinite~edges~anchored~at~\Omega) + \#(finite~edges~adjacent~to~\Omega).
		$$
		Obviously, a finite edge is adjacent to two nodes. Hence, taking the sum over all singularities of $\mathcal{G}_{\Omega_0}$
		and using theorem \ref{thm:graphindex} yields
		$$
			a(\psi_0) + 2~rk(Fix(\psi_0)) + 2(n-1) = \sum\limits_{\Omega\in \mathcal{G}_{\Omega_0}} \#(H_\Omega/\sim)
		$$
		(recall that $a(\psi_0)$ is the number of equivalence classes of attracting points of $\partial \psi_0$ (see section \ref{subsec:index}))
		and we conclude.
		\end{proof}

		We end this section by giving another formula for the FO-index. While slightly more complicated to write, it will allow
		for a more direct use of the informations given by the algorithm to come.

		First, observe that if $\Omega_\varphi$ is the singularity fixed by $\varphi^k$ for some interger $k\ge 1$, then
		for any two distinct points $V$ and $V'$ of $H_{\Omega_\varphi}$, we have $V_0\ne V_0'$.
		Indeed, any point $V$ of $\Sm\cup \Sp$ fixed by $\varphi^k$ verifies $V = \lim\limits_{n\to +\infty}\varphi^{kn}(V_0)$.

		Also, recall from proposition \ref{prop:commoncoord} that if $\psi=i_w\circ \varphi^k$ with $w\ne \epsilon$ fixes a singularity $\Omega$,
		then all the points of $\Omega$ have a common first or second coordinate.
		For two points $W=(U, V)$ and $W'=(U', V')$ of $\Sv$, we denote $W\approx W'$ if $U_0=U_0'$ and $V_0=V_0'$.

		\begin{thm}\label{thm:therealcount}
		Suppose no singularity is fixed by a power of $\varphi$. Then the FO-index of $\varphi$ is defined by
		$$
			\ind(\Phi_\infty) = \frac{1}{2} (\sum\limits_{\Omega} (\# (\Omega/\approx) - 1))
		$$
		where the sum is taken over all singularities.

		Suppose $\Omega_\varphi$ is the singularity fixed by a power of $\varphi$.
		The FO-index of $\varphi$ is then defined by
		$$
			\ind(\Phi_\infty) = \frac{1}{2} ((\# (H_{\Omega_\varphi}) - 2) + \sum\limits_{\Omega\ne \Omega_\varphi} (\# (\Omega/\approx) - 1))
		$$
		where the sum is taken over all singularities that are not $\Omega_\varphi$.
		\end{thm}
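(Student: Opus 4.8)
The plan is to derive both formulas from the previous theorem (Theorem \ref{thm:therealcount}'s companion) expressing $\ind(\Phi_\infty)$ as $\frac{1}{2}\sum_\Omega(\#(H_\Omega/\!\sim)-2)$, by relating, for each singularity $\Omega$, the quantity $\#(H_\Omega/\!\sim)$ to $\#(\Omega/\!\approx)$. So the heart of the argument is purely combinatorial bookkeeping about the coordinates of points of a single singularity $\Omega$, together with the dichotomy provided by Proposition \ref{prop:purity} and Proposition \ref{prop:commoncoord}: a singularity is fixed either by a power of $\varphi$ (the $w=\epsilon$ case) or by some $\psi=i_w\circ\varphi^k$ with $w\neq\epsilon$, in which case all points of $\Omega$ share a common first coordinate (if $w$ is pure positive) or a common second coordinate (if $w$ is pure negative).

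First I would treat a singularity $\Omega$ with $w\neq\epsilon$, say $w$ pure positive. Then every $(U,V)\in\Omega$ has the same $U=U_0$ first letter (indeed the same $U$), so the map $(U,V)\mapsto V_0$ induces a bijection $\Omega/\!\approx\ \to H_\Omega^+/\!\sim$ onto the classes of second coordinates, while the first coordinates contribute exactly one $\sim$-class. Hence $\#(H_\Omega/\!\sim)=\#(\Omega/\!\approx)+1$, i.e. $\#(H_\Omega/\!\sim)-2=\#(\Omega/\!\approx)-1$. The case $w$ pure negative is symmetric. Summing over all such $\Omega$ and plugging into the earlier formula gives the first displayed identity, valid when no singularity is fixed by a power of $\varphi$.

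Next I would handle the exceptional singularity $\Omega_\varphi$ fixed by a power of $\varphi$. As noted just before the statement, for distinct $V,V'\in H_{\Omega_\varphi}$ we have $V_0\neq V_0'$ (since such a fixed point is $\lim_n\varphi^{kn}(V_0)$), so $\sim$ is trivial on $H_{\Omega_\varphi}$ and $\#(H_{\Omega_\varphi}/\!\sim)=\#(H_{\Omega_\varphi})$; this is why the $\Omega_\varphi$-term keeps its $H$-form rather than being rewritten via $\approx$. For every other singularity $\Omega\neq\Omega_\varphi$ the automorphism fixing it has $w\neq\epsilon$ (a singularity fixed by $\psi=i_w\circ\varphi^k$ with $w=\epsilon$ and $k$ suitable would be $\Omega_\varphi$ by the last clause of the definition of singularity forcing it to absorb all $\partial^2\varphi^h$-fixed points), so the identity $\#(H_\Omega/\!\sim)-2=\#(\Omega/\!\approx)-1$ from the previous paragraph applies. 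Substituting these two kinds of terms into $\ind(\Phi_\infty)=\frac12\sum_\Omega(\#(H_\Omega/\!\sim)-2)$ yields the second displayed formula.

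The main obstacle I anticipate is not the counting itself but justifying cleanly that the $w\neq\epsilon$ alternative really forces a genuinely shared coordinate across \emph{all} of $\Omega$ (not merely pairwise), and that the "$w=\epsilon$" singularity is unique and coincides with $\Omega_\varphi$ as used above; both points rest on Proposition \ref{prop:commoncoord} and on the maximality/absorption clauses in the definition of singularity, and I would want to state them as an explicit preliminary observation before doing the arithmetic. Once that dichotomy is pinned down, the remainder is the elementary identity $\#(H_\Omega/\!\sim)=\#(\Omega/\!\approx)+1$ in the non-exceptional case and $\#(H_{\Omega_\varphi}/\!\sim)=\#(H_{\Omega_\varphi})$ in the exceptional one, after which the two formulas drop out of the earlier theorem.
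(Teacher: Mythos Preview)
Your proposal is correct and follows precisely the route the paper intends: the paper does not write out a proof of this theorem, but the two observations it places just before the statement (that distinct elements of $H_{\Omega_\varphi}$ have distinct first letters, and that for $w\neq\epsilon$ all points of $\Omega$ share a common coordinate by Proposition~\ref{prop:commoncoord}) are exactly the ingredients you use to pass from the previous formula $\ind(\Phi_\infty)=\tfrac12\sum_\Omega(\#(H_\Omega/\!\sim)-2)$ to the stated one. Your identity $\#(H_\Omega/\!\sim)=\#(\Omega/\!\approx)+1$ in the $w\neq\epsilon$ case and $\#(H_{\Omega_\varphi}/\!\sim)=\#(H_{\Omega_\varphi})$ in the exceptional case are the intended bridges, and your remark on the uniqueness of $\Omega_\varphi$ via the maximality clause in the definition of singularity is the right justification.
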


		This last formula is indeed quite powerful, as it allows us to obtain the FO-index while disregarding
		completely the possible fixed subgroups and the isogredience classes.

\section{Identifying singularities}\label{sec:identsing}

We now focus on the study of singularities. In section \ref{subsec:identifying}, we explain
how prefix-suffix developments can be used to identify points of a common singularity.
These results will provide a ground for section \ref{subsec:algorithm}, in which we give an algorithm
able to determine all the singularities in a finite number of steps.

	\subsection{Prefix-suffix developments of fixed points}\label{subsec:identifying}

	We assume again that $\varphi$ is $A_N$-positive primitive automorphism; its attracting subshift is denoted $\Sv$ and we define
	$\Phi_\infty = \{i_w\circ \varphi^k;~w\in F_N,~k\in \mathds{N}^*\}$. We also define the prefix-suffix representation map $\rho_{\varphi}$
	as in section \ref{subsec:psr}. We explained the action of the shift map $S$ on prefix-suffix developments, and we saw that we can not
	deduce the development $\rho_{\varphi}(S(W))$ (resp. $\rho_{\varphi}(S^{-1}(W)$) if the sequence of suffix (resp. prefix) of $\rho_{\varphi}(W)$ is constant equal to $\epsilon$.
	Lemma \ref{lem:nodeal} will allow us to avoid having to deal with these situations. As before, $\Svp\subset \Sv$ is the set of periodic points of $\partial^2 \varphi$.

	\begin{lem}\label{lem:nodeal}
	Let $(U, V)$ be a point of $\Sv$ fixed by $\partial^2 \psi$ for some automorphism $\psi=i_w\circ \varphi^k$ of $\Phi_\infty$ with $w\ne \epsilon$.
	If $w$ is pure positive (resp. negative), then $(U, V)$ is not a point of $\bigcup\limits_{n\in \mathds{N}} S^n(\Svp)$ (resp. $\bigcup\limits_{n\in \mathds{N}} S^{-n}(\Svp)$).
	\end{lem}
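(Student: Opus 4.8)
The plan is to argue by contradiction, exploiting the incompatibility between the purity of $w$ forced by Proposition \ref{prop:purity} and the structure of points in the $S$-orbit of $\Svp$. Suppose $w$ is pure positive and that $(U,V) = S^n(W_0)$ for some $W_0 \in \Svp$ and some $n \in \mathds{N}$; the pure negative case is symmetric, working with $S^{-n}$ and exchanging the roles of the two coordinates. By Proposition \ref{prop:freetocomb}, since $w$ is pure positive we have $S^{|w|}\circ \partial^2\varphi^k(U,V) = (U,V)$, and in particular $V = w^{-1}\partial\varphi^k(V)$, so $wV = \partial\varphi^k(V)$ with $w$ pure positive and nonempty. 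The key structural fact to extract is that $V \in \Sp$ must then be \emph{pure positive}: indeed $w^{-1}$ is a strict prefix of $V$ by the remark following Proposition \ref{prop:purity}, and $\partial\varphi^k(V)$ is pure positive since $\varphi$ is $A_N$-positive, forcing all letters of $V$ to lie in $A_N$ once we iterate.

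Next I would use the description of $\Svp$ via the prefix-suffix representation. By the theorem of \cite{CanSie} quoted in Section \ref{subsec:psr}, $\rho_\varphi(\Svp) = D_{min}$ and $\rho_\varphi(\bigcup_{n\in\mathds{Z}} S^n(\Svp)) = D_\epsilon$; concretely, if $W_0\in\Svp$ then the prefix sequence of $\rho_\varphi(W_0)$ is constantly $\epsilon$, which by the explicit formulas means the first coordinate $U_0$ of $W_0$ arises as $\lim_n p_0^{-1}\varphi(p_1^{-1})\cdots$ with all $p_i=\epsilon$, i.e. the first coordinate of a periodic point is "trivial" (a single pure negative letter's limit, or $\epsilon$). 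Tracking the effect of $S^n$ on developments (the explicit formula for $\rho_\varphi(S(W))$ recalled in Section \ref{subsec:psr}), the point $(U,V) = S^n(W_0)$ has a prefix-suffix development in $D_\epsilon$, so eventually one of its prefix or suffix sequences is constant $\epsilon$. The goal is to show this is incompatible with $(U,V)$ being fixed by $\partial^2\psi$ with $w$ pure positive nonempty: the relation $S^{|w|}\circ\partial^2\varphi^k(U,V) = (U,V)$ says $(U,V)$ is a genuine $S$-periodic-up-to-$\varphi$ point whose second coordinate $V$ absorbs the nonempty pure positive word $w$ under $\partial\varphi^k$, which forces the suffix sequence of $\rho_\varphi(U,V)$ to be infinitely often nonempty (it must reproduce the letters of $w$), while membership in $S^n(\Svp)$ for $n\ge 0$ forces the prefix sequence to be eventually $\epsilon$ — and one checks a point with eventually-empty prefix sequence cannot have its first coordinate $U$ satisfy $U = w^{-1}\partial\varphi^k(U)$ with $w$ nonempty pure positive, because that would make $U$ pure positive with $w^{-1}$ a prefix, contradicting that $U$ comes from a development whose prefixes die out (such a $U$ is pure negative or eventually trivial).

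The cleanest route, which I would actually write, avoids the full development bookkeeping: from $U = w^{-1}\partial\varphi^k(U)$ with $w$ nonempty pure positive and $w^{-1}$ a prefix of $U$, iterate to get $U = \lim_{m\to\infty} w^{-1}\varphi^k(w^{-1})\varphi^{2k}(w^{-1})\cdots\varphi^{mk}(w^{-1})$ (this is exactly Proposition \ref{prop:commoncoord}), so $U$ is pure negative. On the other hand, if $(U,V) = S^n(W_0)$ with $W_0 \in \Svp$ and $n \ge 0$, then since $\rho_\varphi(W_0) \in D_{min}$ has empty prefix sequence and $S$ applied $n$ times only modifies finitely many initial terms of the development, $\rho_\varphi(U,V) \in D_\epsilon$ with prefix sequence eventually $\epsilon$; by the explicit formula $U = \lim_n p_0^{-1}\varphi(p_1^{-1})\cdots$ this means $U$ is a \emph{finite} word (if the prefix sequence is constant $\epsilon$ from the start, $U$ contributes nothing and $(U,V)$ has a degenerate first coordinate) — more precisely $U \in F_N$ or $U$ is built from finitely many nonempty $p_i$, hence $U$ cannot equal the genuinely infinite pure negative point displayed above. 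This contradiction proves the lemma. The main obstacle I anticipate is getting the bookkeeping of $S^n$ on prefix-suffix developments exactly right so that "$\rho_\varphi(W_0)\in D_{min}$ and $n\ge 0$" cleanly yields "$\rho_\varphi(S^n W_0)$ has eventually empty prefix sequence, hence its first coordinate is not an infinite pure negative word"; I would phrase this by noting $\bigcup_{n\in\mathds{N}} S^n(\Svp) \subseteq \rho_\varphi^{-1}(D_\epsilon)$ and that for developments in $D_\epsilon$ with eventually-empty prefixes, the first coordinate is the limit of a stationary-after-some-point concatenation, incompatible with the nondegenerate infinite expression for $U$ coming from Proposition \ref{prop:commoncoord}.
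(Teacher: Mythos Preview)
Your proposal contains a genuine gap at its crucial step. In the ``cleanest route'' you correctly obtain from Proposition~\ref{prop:commoncoord} that
\[
U = \lim_{m\to\infty} w^{-1}\varphi^k(w^{-1})\cdots\varphi^{mk}(w^{-1}),
\]
a pure negative infinite word. You then assert that because $(U,V)\in S^n(\Svp)$ with $n\ge 0$ forces the prefix sequence of $\rho_\varphi(U,V)$ to be eventually $\epsilon$, the formula $U=\lim_n p_0^{-1}\varphi(p_1^{-1})\cdots$ makes $U$ ``a finite word'' or ``built from finitely many nonempty $p_i$'', hence incompatible with the expression above. This is a misreading of the Canterini--Siegel theorem quoted in Section~\ref{subsec:psr}: the formula for $U$ applies \emph{only when} the prefix sequence is \emph{not} eventually $\epsilon$. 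When the prefix sequence is eventually $\epsilon$, the development simply fails to determine $U$; this is precisely why $\rho_\varphi$ is not injective on $\bigcup_{n\in\mathds{Z}}S^n(\Svp)$. The first coordinate $U$ remains a genuine infinite point of $\Sm$ (indeed, for $W_0=(U_0,V_0)\in\Svp$ one has $U=v^{-1}*U_0$ with $U_0$ a pure negative fixed point of some $\partial\varphi^j$), and nothing you have written excludes it from coinciding with $\lim_m w^{-1}\varphi^k(w^{-1})\cdots$. Both are pure negative infinite words, so there is no contradiction at the level you are working.

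The paper's proof bypasses developments entirely and is much shorter. Writing $(U,V)=v^{-1}(U',V')$ with $(U',V')\in\Svp$ fixed by $\partial^2\varphi^k$ (after passing to a common power) and $v$ a prefix of $V'$ (this is exactly the $n\ge 0$ case), Proposition~\ref{prop:isomove} gives that $(U,V)$ is fixed by $\partial^2(i_{\varphi^k(v^{-1})v}\circ\varphi^k)$, so $w=\varphi^k(v^{-1})v$. Since $v$ is a prefix of $V'$ and $V'=\partial\varphi^k(V')$, the word $v$ is a strict prefix of $\varphi^k(v)$, whence $w=\varphi^k(v^{-1})v$ is pure negative. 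This directly contradicts $w$ being pure positive, with no appeal to prefix--suffix machinery.
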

	\begin{proof}
	Let $(U', V')$ be a fixed point of $\partial^2 \varphi^k$ and suppose $(U, V)$ is on the $S$-orbit of $(U', V')$:
	we have $(U, V) = v^{-1}(U', V')$ for some prefix $v$ of $V'$ or $U'$. The word $v$ needs to be non empty if we want
	$w$ to be non empty. We get from proposition \ref{prop:isomove} that the point $(U, V)$ is fixed by
	$\partial^2 (i_v\circ \varphi^k\circ i_{v^{-1}}) = \partial^2 (i_{\varphi^k(v^{-1})v}\circ \varphi^k)$. Define $w = \varphi^k(v^{-1})v$.
	Since $v$ is a prefix of $V'$ or $U'$ and $(U', V')$ is fixed by $\partial^2 \varphi^k$, then $v$ must be a strict (from proposition \ref{prop:notpure})
	prefix of $\varphi^k(v)$. Hence, if $v$ is a prefix of $V'$ (resp. $U'$), then
	$w$ is pure negative (resp. positive).
	\end{proof}

	The following theorem is the base of the current study and justifies the use of prefix-suffix developments as a tool
	to find singularities.

	\begin{thm}\label{thm:fixedisperiodic}
	For any $A_N$-positive primitive automorphism $\varphi$,
	if $W\in \Sv$ is a fixed point of $\partial^2\psi$ with $\psi = i_w\circ\varphi^k\in \Phi_\infty$, then $\rho_{\varphi}(W)$ is preperiodic (eventually periodic) with a period of length $\le k$.
	\end{thm}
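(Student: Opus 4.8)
The plan is to use the explicit formulas for the action of $\partial^2\varphi$ and $S$ on prefix-suffix developments that were recorded at the end of section \ref{subsec:psr}, combined with Proposition \ref{prop:freetocomb}. First I would handle the harmless case: if $w = \epsilon$, then $\psi = \varphi^k$ and $W$ is fixed by $\partial^2\varphi^k$, so $W$ is periodic of period dividing $k$, hence $\rho_{\varphi}(W) \in D_{min}$ by the Canterini--Siegel theorem; in particular $\rho_{\varphi}(W)$ is (pre)periodic with period $\le k$. (One should note here the period-length bound: a fixed point of $\partial^2\varphi^k$ lies in $\Svp$, and its development is in $D_{min}$, which is genuinely periodic under the "shift by one letter of the development" induced by $\partial^2\varphi$; $k$ applications of $\partial^2\varphi$ return $W$, hence shift the development back to itself, giving period $\le k$.)

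**The main case $w\neq\epsilon$.** By Proposition \ref{prop:purity}, $w$ is pure; assume $w$ is pure positive (the negative case is symmetric, working with $S^{-|w|}$). By Proposition \ref{prop:freetocomb}, $S^{|w|}\circ\partial^2\varphi^k(W) = W$, i.e. $\partial^2\varphi^k(W) = S^{-|w|}(W)$. The strategy is to track what these two operations do to $\rho_{\varphi}(W) = (p_i,a_i,s_i)_{i\ge 0}$. Applying $\partial^2\varphi^k$ is clean: iterating the rule $\rho_{\varphi}(\partial^2\varphi(W)) = (\epsilon,b,r)(p_{i-1},a_{i-1},s_{i-1})_{i\ge 1}$ exactly $k$ times prepends $k$ new triples (all with empty prefix) at the front and shifts the old development $k$ steps to the right. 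On the other side, applying $S^{-|w|}$: by Lemma \ref{lem:nodeal}, since $w$ is pure positive, $W\notin\bigcup_{n\in\mathds{N}}S^n(\Svp)$, equivalently $\rho_{\varphi}(W)\notin D_{max}$ and, crucially, $\rho_\varphi(S^{-j}(W))\notin D_{max}$ for all $0\le j< |w|$, so the explicit formula for $\rho_{\varphi}(S^{-1}(\cdot))$ applies at each of the $|w|$ steps and never falls into the forbidden case. Each such step modifies only finitely many initial triples of the development (it changes the index $i_0$ where the prefix sequence first becomes nonempty, rewrites the triples before it, and leaves $(p_i,a_i,s_i)_{i>i_0}$ untouched).

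**Extracting preperiodicity.** Combining the two computations: the development of $W$ equals, on the one hand, $k$ fresh triples followed by the development of $W$ shifted right by $k$; and on the other hand, the result of $|w|$ backward-shift operations, each of which alters only a bounded initial segment. Reading off the tails: there is an index $I$ (depending on $|w|$ and the finitely many $i_0$'s arising along the $|w|$ backward steps) beyond which the sequence $(p_i,a_i,s_i)_{i\ge I}$ satisfies $(p_{i+k},a_{i+k},s_{i+k}) = (p_i,a_i,s_i)$ for all $i\ge I$ — because from index $I$ onward the $S^{-|w|}$-side is just the identity on the tail, while the $\partial^2\varphi^k$-side is the shift by $k$. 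That is exactly preperiodicity with period $\le k$. The cleanest way to write this is to compare the unique infinite reduced words $U$ and $V$ directly: from $\partial^2\varphi^k(W) = S^{-|w|}(W)$ one gets $V = w^{-1}\varphi^k(V)$ with $w^{-1}$ a prefix of $V$ and $|w|$ letters of $V$ consumed per application, so $V$ is a fixed point of the substitution-like map $X\mapsto w^{-1}\varphi^k(X)$; standard substitution-dynamics then gives that $V$ (and symmetrically $U$, up to the finite discrepancy) is eventually the fixed point of an iterate, whose development is eventually periodic with period governed by $k$.

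**Expected obstacle.** The main technical nuisance is the bookkeeping for the $|w|$ successive applications of the backward-shift formula: one must verify that at every intermediate stage the development is not in $D_{max}$ (so the formula is legal) and keep careful track of how the "boundary index" $i_0$ moves, in order to pin down the uniform index $I$ past which the tail is literally $k$-periodic — and to certify the period is $\le k$ rather than merely a multiple of $k$. Lemma \ref{lem:nodeal} is precisely what makes the legality clause go through, so the real work is the indexing argument showing the tails of the two expressions for $\rho_\varphi(W)$ agree and forcing $k$-periodicity there.
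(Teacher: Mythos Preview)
Your approach is exactly the paper's: use Proposition~\ref{prop:freetocomb} to write $\partial^2\varphi^k(W)=S^{-|w|}(W)$ (for $w$ pure positive), compute both developments, and compare tails. Two small corrections: first, to apply the formula for $S^{-1}$ you need $\rho_\varphi(S^{-j}(W))\notin D_{min}$ (not $D_{max}$), and this is precisely what Lemma~\ref{lem:nodeal} delivers via $W\notin\bigcup_{n\ge 0}S^n(\Svp)$ and $\rho_\varphi^{-1}(D_{min})=\Svp$; second, the ``expected obstacle'' does not materialize --- the paper simply writes
\[
(q_i,b_i,r_i)_{0\le i<i_0}(p_i,a_i,s_i)_{i\ge i_0}=(\epsilon,a_i',s_i')_{0\le i<k}(p_{i-k},a_{i-k},s_{i-k})_{i\ge k}
\]
and reads off $(p_i,a_i,s_i)=(p_{i-k},a_{i-k},s_{i-k})$ for $i\ge\max(i_0,k)$ in one line, with no further bookkeeping and no need for the substitution-dynamics detour.
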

	\begin{proof}
	If $\rho_{\varphi}(W) = (p_i, a_i, s_i)_{i\ge 0}$, then $\rho_{\varphi}(\partial^2\varphi^k(W)) = (\epsilon, a_i', s_i')_{0\le i < k}(p_{i-k}, a_{i-k}, s_{i-k})_{i\ge k}$
	(see section \ref{subsec:psr}). We start with the case $w=\epsilon$. We have
	\begin{center}
		$\rho_{\varphi}(W) = (p_i, a_i, s_i)_{i\ge 0} = \rho_{\varphi}(\partial^2\varphi^k(W)) = (\epsilon, a_i', s_i')_{0\le i < k}(p_{i-k}, a_{i-k}, s_{i-k})_{i\ge k}$.
	\end{center}

	We now suppose $w$ is pure positive. The point $W$ is fixed by $S^{|w|}\circ\partial^2\varphi^k$ (proposition \ref{prop:freetocomb})
	and from lemma \ref{lem:nodeal}, we can deduce $\rho_{\varphi}(S^{-|w|}(W))$ from $\rho_{\varphi}(W)$.
	Applying $S^{-|w|}$ to the development of $W$ will only modify the $i_0$ first terms of the development, and we have
	\begin{center}
		\begin{tabular}{ccl}
			$\rho_{\varphi}(S^{-|w|}(W))$ & $=$ & $(q_i, b_i, r_i)_{0\le i < i_0}(p_i, a_i, s_i)_{i\ge i_0}$\\
			& $=$ & $\rho_{\varphi}(\partial^2\varphi^k(W))$\\
			& $=$ & $(\epsilon, a_i', s_i')_{0\le i < k}(p_{i-k}, a_{i-k}, s_{i-k})_{i\ge k}$.
		\end{tabular}
	\end{center}

	If $w$ is pure negative, then $W$ is fixed by $S^{-|w|}\circ\partial^2\varphi^k$ (proposition \ref{prop:freetocomb}) and
	the same arguments will apply for $\rho_{\varphi}(S^{|w|}(W))$.
	\end{proof}

	We are going to show that the converse also holds. 
	It will be easier for us to work with eventually constant development, rather than
	preperiodic ones. This can be achieved by considering powers of $\varphi$:
	recall that for all $k\in\mathds{N}^*$, $\varphi^k$ is also an $A_N$-positive primitive automorphism,
	and its attracting subshift is $\Sigma_{\varphi^k} = \Sv$.
	We denote by $\rho_{\varphi^k}$ ($k\ge 1$) the prefix-suffix development map associated to $\varphi^k$. 
	\begin{prop}
	Let $W$ be a point of $\Sv$. If $\rho_{\varphi}(W) = (p_i, a_i, s_i)_{i\ge 0}$
	then $\rho_{\varphi^k}(W) = (q_j, b_j, r_j)_{j\ge 0}$ with, for all $j\ge 0$,
	\begin{itemize}
		\item $q_j = \varphi^{k-1}(p_{jk+(k-1)})\varphi^{k-2}(p_{jk+(k-2)})\dots \varphi(p_{jk+1})p_{jk}$,
		\item $b_j = a_{jk}$,
		\item $r_j = s_{jk}\varphi(s_{jk+1})\dots \varphi^{k-2}(s_{jk+(k-2)})\varphi^{k-1}(s_{jk+(k-1)})$.
	\end{itemize}
	Consequently, if $\rho_{\varphi}(W)$ is preperiodic with a period of length $k$, then $\rho_{\varphi^k}(W)$ is eventually constant.
	\end{prop}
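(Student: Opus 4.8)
The plan is to track how a single prefix-suffix development for $\varphi$ decomposes into one for $\varphi^k$ by grouping the terms into blocks of length $k$. The starting point is the formula, recalled in section \ref{subsec:psr}, for the action of $\varphi$ on developments: if $\rho_{\varphi}(W) = (p_i, a_i, s_i)_{i\ge 0}$, then $\rho_{\varphi}(\partial^2\varphi(W))$ is obtained by removing the first triple and prepending $(\epsilon, b, r)$ with $\varphi(a_0) = b*r$; equivalently, the edges of an infinite walk in the prefix-suffix automaton of $\varphi$ record the relation $\varphi(a_{i+1}) = p_i*a_i*s_i$. The prefix-suffix automaton of $\varphi^k$ has the same vertex set $A_N$, and an edge from $a$ to $b$ labeled $(q, a, r)$ exactly when $\varphi^k(b) = q*a*r$. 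So the first thing I would do is verify that, given the walk $(a_i)_{i\ge 0}$ in the automaton of $\varphi$ realizing $\rho_{\varphi}(W)$, the subsequence $(a_{jk})_{j\ge 0}$ is a walk in the automaton of $\varphi^k$, and that the induced triple from $a_{(j+1)k}$ to $a_{jk}$ is exactly $(q_j, a_{jk}, r_j)$ with $q_j, r_j$ as in the statement.

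The key computation is the telescoping identity
\[
\varphi^k(a_{(j+1)k}) = \varphi^{k-1}(p_{jk+(k-1)})\cdots\varphi(p_{jk+1})\,p_{jk} \;*\; a_{jk} \;*\; s_{jk}\,\varphi(s_{jk+1})\cdots\varphi^{k-1}(s_{jk+(k-1)}).
\]
I would prove this by induction on $k$ (or directly by iterating): from $\varphi(a_{i+1}) = p_i*a_i*s_i$ we get $\varphi^2(a_{i+2}) = \varphi(p_{i+1})*\varphi(a_{i+1})*\varphi(s_{i+1}) = \varphi(p_{i+1})p_i*a_i*s_i\varphi(s_{i+1})$, where the crucial point is that the concatenations stay cancellation-free: since $\varphi$ is $A_N$-positive, $\varphi(p_{i+1})$, $p_i$, $a_i$, $s_i$, $\varphi(s_{i+1})$ are all pure positive words, so $*$ is automatic between any two of them. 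Iterating this $k-1$ times from $a_{jk}$ up to $a_{(j+1)k}$ gives the displayed formula; reading off the prefix part as $q_j$, the central letter as $b_j = a_{jk}$, and the suffix part as $r_j$ identifies the $\varphi^k$-development. The identification then needs the uniqueness clause: since $W$ is a fixed point in no special way here, I would instead invoke that $\rho_{\varphi^k}$ is the prefix-suffix development map and that the walk so constructed produces, via the limit formulas of the Canterini--Siegel theorem, the same point $U$ (from the $q_j$'s) and the same point $V$ (from the $r_j$'s) as the original development does — because the words $q_0^{-1}\varphi^k(q_1^{-1})\cdots$ and $b_0 r_0 \varphi^k(r_1)\cdots$ are literally the same infinite products as $p_0^{-1}\varphi(p_1^{-1})\cdots$ and $a_0 s_0 \varphi(s_1)\cdots$ after regrouping. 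Since $\rho_{\varphi^k}$ is onto and is one-to-one off $\bigcup_n S^n(\Svp)$, and the regrouped walk is an admissible $\varphi^k$-development mapping to $W$, it must equal $\rho_{\varphi^k}(W)$ (with the usual care on the periodic locus, where both sides are still determined by the walk).

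The final sentence is then immediate: if $\rho_{\varphi}(W)$ is eventually periodic with period length $k$, say $(p_{i+k}, a_{i+k}, s_{i+k}) = (p_i, a_i, s_i)$ for all $i \ge i_0$, then for $j$ large the block data $(q_j, b_j, r_j)$ depends only on the periodic block $(p_{i_0}, a_{i_0}, s_{i_0}), \dots, (p_{i_0+k-1}, a_{i_0+k-1}, s_{i_0+k-1})$ and hence is constant in $j$, so $\rho_{\varphi^k}(W)$ is eventually constant. The main obstacle I anticipate is bookkeeping rather than mathematical depth: being careful that the index alignment in the blocks is correct (whether the period starts at a multiple of $k$ can be arranged by passing to a shifted starting index, or absorbed into the "eventually" quantifier), and making the uniqueness argument for $\rho_{\varphi^k}$ clean on the periodic locus where $\rho_{\varphi^k}$ fails to be injective — there one argues directly that the regrouped walk is the image under $\rho_{\varphi^k}$ because it is a legitimate walk in the $\varphi^k$-automaton whose associated limit words recover $W$.
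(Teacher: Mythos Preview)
The paper states this proposition without proof, treating it as a routine verification from the definitions in section~\ref{subsec:psr}. Your argument is correct and is exactly the computation one would supply: the telescoping identity $\varphi^k(a_{(j+1)k}) = q_j * a_{jk} * r_j$ follows by iterating $\varphi(a_{i+1}) = p_i * a_i * s_i$, with all concatenations cancellation-free because $\varphi$ is $A_N$-positive.

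Two small remarks. First, your concern about uniqueness on the periodic locus can be sidestepped entirely: in \cite{CanSie} the map $\rho_{\varphi}$ (and hence $\rho_{\varphi^k}$) is constructed by successive desubstitution, so performing $k$ steps of $\varphi$-desubstitution is \emph{by definition} one step of $\varphi^k$-desubstitution, and no separate injectivity argument is needed. Second, the alignment worry you raise in the last paragraph is unfounded: once $jk \ge i_0$, the relation $(p_{i+k}, a_{i+k}, s_{i+k}) = (p_i, a_i, s_i)$ applied to each index in the block $[jk, jk+k-1]$ gives $(q_{j+1}, b_{j+1}, r_{j+1}) = (q_j, b_j, r_j)$ directly, regardless of whether $i_0$ is a multiple of $k$.
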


	We now only work with eventually constant development. An eventually constant
	development will be denoted $(p_i, a_i, s_i)_{0\le i < n}(p, a, s)*$, thereby
	signifying that the triplet $(p, a, s)$ is repeated indefinitely.
	We start with points with constant developments.
	\begin{lem}\label{lem:constantdev}
	Let $W=(U, V)$ be a point of $\Sv$ with $\rho_\varphi(W) = (p, a, s)*$.
	The point $W$ is fixed by $\partial^2 (i_p\circ \varphi)^h$
	for some integer $h\ge 1$. Moreover, if $p$ and $s$ are both non empty, then $h=1$.
	\end{lem}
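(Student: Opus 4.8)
The plan is to read off the dynamics of $W$ directly from the structure of the constant prefix-suffix development $(p,a,s)*$ and the known action of $\varphi$ and $S$ on developments recalled in section \ref{subsec:psr}.

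First I would compute $\rho_\varphi(\partial^2\varphi(W))$. By the formula for the action of $\varphi$ on developments, if $\varphi(a) = b*r$ with $b\in A_N$, then $\rho_\varphi(\partial^2\varphi(W)) = (\epsilon, b, r)(p, a, s)*$. This is \emph{not} the development of $W$ in general, so $W$ need not be fixed by $\partial^2\varphi$ itself — I must post-compose with a conjugacy, which is exactly where the word $p$ enters. The key observation is that $\varphi(a) = p*a*s$ (this is the defining relation of the prefix-suffix automaton: there is a loop at $a$ labeled $(p,a,s)$ precisely when $\varphi(a) = p*a*s$, and a constant development $(p,a,s)*$ corresponds to repeatedly traversing such a loop). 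So $\varphi(a) = p*a*s$, and I want to understand $\partial^2(i_p\circ\varphi)(W) = p^{-1}\partial^2\varphi(W)$ (taking $w=p$, so $i_p(u)=p^{-1}up$ — here $w^{-1}=p^{-1}$... I need to be careful about which side; in any case the conjugating word is $p^{\pm 1}$).

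The cleanest route is to verify the fixed-point equation coordinate by coordinate using the limit formulas for $U$ and $V$ attached to a development. When $s\neq\epsilon$, the second coordinate satisfies $V = \lim_{n\to\infty} as\,\varphi(s)\cdots\varphi^n(s) = a\cdot s\cdot\varphi(s)\cdot\varphi^2(s)\cdots$. Applying $\varphi$ and using $\varphi(a)=p*a*s$ gives $\varphi(V) = \varphi(a)\varphi(s)\varphi^2(s)\cdots = p*a*s*\varphi(s)*\varphi^2(s)\cdots = p*V$ (the concatenations being cancellation-free because $\varphi$ is $A_N$-positive, so everything pure positive concatenates freely, and $a*s*\dots$ is already how $V$ is built). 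Hence $p^{-1}\varphi(V) = V$, i.e. $V$ is fixed by $\partial(i_p\circ\varphi)$. Symmetrically, when $p\neq\epsilon$, the first coordinate is $U = \lim_{n\to\infty} p^{-1}\varphi(p^{-1})\cdots\varphi^n(p^{-1})$, and one checks $\varphi(U) = \varphi(p^{-1})\varphi^2(p^{-1})\cdots = p\cdot\big(p^{-1}\varphi(p^{-1})\varphi^2(p^{-1})\cdots\big) = p\,U$, so again $p^{-1}\varphi(U) = U$. In both nontrivial-coordinate cases this gives $\partial^2(i_p\circ\varphi)(W) = W$ outright, i.e. $h=1$ — and if \emph{both} $p$ and $s$ are non empty both coordinates are handled simultaneously, giving the final sentence of the lemma.

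The remaining work is the degenerate cases where one of $p$, $s$ is empty (the development then lies in $D_{min}$ or $D_{max}$, i.e. $W$ or $S^{-1}(W)$ is $S$-periodic, by the Canterini–Siegel theorem). If $p=\epsilon$: then $\varphi(a) = a*s$, so $a$ is the first letter of $\varphi(a)$ and $X := \lim\varphi^n(a) = \lim as\varphi(s)\cdots$ is a point of $\Sm\cup\Sp$ fixed by $\partial\varphi$; by primitivity a suitable power $\varphi^h$ makes the relevant coordinate of $W$ equal to such a fixed point, and one invokes the earlier machinery (in spirit, proposition \ref{prop:uni-bi}, or a direct argument) to conclude $\partial^2\varphi^h(W)=W$ — note $i_p = i_\epsilon = \mathrm{id}$, so $(i_p\circ\varphi)^h = \varphi^h$, consistent with the statement and explaining why $h>1$ may genuinely be needed here. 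The case $s=\epsilon$ is symmetric, working with the first coordinate and $p$. The main obstacle I anticipate is bookkeeping in these degenerate cases: making sure the limit defining the relevant coordinate of $W$ really is eventually the $\partial\varphi$-fixed point $\lim\varphi^n(a)$ (up to the shift ambiguity that forces passing to a power $h$), and checking that the cancellation-free hypotheses needed to push $\varphi$ through the infinite concatenations hold — but since $\varphi$ is $A_N$-positive and all the words $p$, $a$, $s$, and their $\varphi$-images are pure positive, no cancellation ever occurs and these checks are routine once set up carefully.
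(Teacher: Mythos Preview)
Your approach is correct and essentially identical to the paper's: verify $p^{-1}\partial\varphi(U)=U$ and $p^{-1}\partial\varphi(V)=V$ directly from the limit formulas for the coordinates (valid when $p\neq\epsilon$ and $s\neq\epsilon$ respectively, giving $h=1$ when both hold), and invoke proposition~\ref{prop:uni-bi} on the already-fixed coordinate to produce the power $h$ when one of $p,s$ is empty. Two cosmetic points to clean up: in the degenerate case $p=\epsilon$ the coordinate $V$ already \emph{is} $\lim_n\varphi^n(a)$, so no ``suitable power'' is needed there --- the $h$ arises solely from proposition~\ref{prop:uni-bi} handling the other coordinate; and your parenthetical ``$W$ or $S^{-1}(W)$ is $S$-periodic'' should read ``$\partial^2\varphi$-periodic'', which is what membership in $D_{min}$ or $D_{max}$ actually encodes.
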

	\begin{proof}
	Recall (section \ref{subsec:psr}) that if $p\ne \epsilon$, then
	$U = \lim\limits_{n\to +\infty} p^{-1}\varphi(p^{-1})\dots \varphi^n(p^{-1})$, and
	if $s\ne \epsilon$, then $V = \lim\limits_{n\to +\infty} as\varphi(s)\dots \varphi^n(s)$.
	Also recall we have $\varphi(a) = pas$.

	First observe that if $p = \epsilon$, then $V = \lim\limits_{n\to +\infty} \varphi^n(a)$. We conclude
	with proposition \ref{prop:uni-bi} that $W$ is fixed by $\partial^2 \varphi^h$ for some $h\ge 1$.

	If $p\ne \epsilon$, it is easy to see that $U$ is fixed by $\partial i_p\circ \varphi$.
	If $s = \epsilon$, then we use again proposition \ref{prop:uni-bi}. If we also have $s\ne \epsilon$,
	then we get from the fact that $\varphi(a) = pas$ that $W$ is fixed by $\partial^2 (i_p\circ \varphi)$.
	\end{proof}

	In section \ref{subsec:psr}, we have explained the action of the shift map on prefix-suffix developments.
	We can deduce from this study that if $W$ is a point of $\Sv$ with a preperiodic development,
	then there is an integer such that $\rho_\varphi(S^j(W))$ is constant. We obtain a converse to theorem
	\ref{thm:fixedisperiodic}.
	\begin{prop}\label{prop:periodicisfixed}
	Let $W$ be a point of $\Sv$ with an eventually constant development,
	then $W$ is fixed by $\partial^2 \psi$ for some $\psi\in \Phi_\infty$.
	\end{prop}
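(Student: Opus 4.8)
The plan is to reduce the statement to the already-settled constant-development case of Lemma \ref{lem:constantdev} and then transport the conclusion back along a power of the shift, using that both $S$ and $\partial^2$ are compatible with the $F_N$-action on $\partial^2 F_N$.

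First I would invoke the observation made just before the statement: the action of $S$ on prefix-suffix developments (described in section \ref{subsec:psr}) gradually consumes the leading triples one letter at a time, so a point $W\in\Sv$ whose development is preperiodic — in particular eventually constant — satisfies $\rho_\varphi(S^j(W)) = (p,a,s)*$ for some integer $j$ and some triple $(p,a,s)$. Then Lemma \ref{lem:constantdev} applies to the point $W_0 := S^j(W)$: it is fixed by $\partial^2\psi_0$, where $\psi_0 = (i_p\circ\varphi)^h \in \Phi_\infty$ for some $h\ge 1$ (the refinement $h=1$ when $p$ and $s$ are both non-empty is available but not needed here).

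It remains to transfer fixedness from $W_0$ to $W$. Here I would use that $S$ acts on $\Sv$ by left translations, $S(Z) = Y_0^{-1}Z$ with $Y_0$ the first letter of the second coordinate of $Z$, and that $S^{-1}$, being the inverse of a homeomorphism of $\Sv$, likewise left-translates by a letter; hence $W = S^{-j}(W_0) = v\cdot W_0$ for some $v\in F_N$. Using $\partial^2 i_u(Z) = u^{-1}Z$, the automorphism $\psi := i_{v^{-1}}\circ\psi_0\circ i_v$ satisfies $\partial^2\psi(W) = v\cdot\partial^2\psi_0(v^{-1}W) = v\cdot\partial^2\psi_0(W_0) = v\cdot W_0 = W$, and $\psi$ lies in $\Phi_\infty$ because $\psi_0$ does. (One could instead quote Proposition \ref{prop:isomove} coordinate by coordinate, but this one-line computation is cleaner.)

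The only step that is not entirely routine is the first reduction — that iterating $S$ turns an eventually constant development into a genuinely constant one — but this is exactly what the analysis of the $S$-action in section \ref{subsec:psr} was designed to supply, so there is no real obstacle; the proof is essentially an assembly of Lemma \ref{lem:constantdev} together with the equivariance of $S$ and $\partial^2\psi$ under left translations.
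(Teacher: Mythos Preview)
Your proposal is correct and follows essentially the same route as the paper, which does not give a formal proof but simply observes (in the paragraph preceding the proposition) that the description of the $S$-action on developments from section~\ref{subsec:psr} produces an integer $j$ with $\rho_\varphi(S^j(W))$ constant, at which point Lemma~\ref{lem:constantdev} applies. Your write-up makes explicit the conjugation step transporting the fixed-point property from $S^j(W)$ back to $W$, which the paper leaves implicit.
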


	Recall that the aim is to be able to find singularities; this goes through identifying
	points fixed by a common homeomorphism. We now know we only need to look among points with preperiodic (or eventually constant)
	prefix-suffix developments. To advance further, we are going to elaborate on proposition \ref{prop:periodicisfixed}.
	Namely, for a point $W$ fixed by $\partial^2 (i_w\circ \varphi)^h$,
	we are going to deduce $w$ from the prefix-suffix development of $W$. We will use the maps defined below.

	Let $\gm$ and $\gp$ be the $F_N$ to $F_N$ maps defined, for every $u=u_0u_1\dots u_p$ in $F_N$, by
	\begin{itemize}
		\item $\gm(u) = \varphi(u_p)u_0u_1\dots u_{p-1}$,
		\item $\gp(u) = u_1\dots u_{p-1}u_p\varphi(u_0)$.
	\end{itemize}
	\begin{thm}\label{thm:findingw}
	Let $\varphi$ be an $A_N$-positive primitive automorphism. Let $W$ be a point of $\Sv$ such that
	$\rho_\varphi(W) = (p_i, a_i, s_i)_{0\le i < n}(p, a, s)*$ and $\rho_{\varphi}(S^j(W)) = (p, a, s)*$ for some integer $j$.
	Then $W$ is a fixed point of $\partial^2 (i_w\circ \varphi)^h$ for some integer $h\ge 1$ and
	\begin{itemize}
		\item $(1)$ if $j\ge 0$, then $w = \gm^j(p)$,
		\item $(2)$ if $j < 0$, then $w^{-1} = \gp^{|j|-1}(s)$.
	\end{itemize}
	\end{thm}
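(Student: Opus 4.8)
The plan is to reduce the general claim to the case of a constant development (Lemma \ref{lem:constantdev}) by tracking how the word $w$ in the conjugacy changes when we apply the shift map $S$. We know from Lemma \ref{lem:constantdev} that $W$ with $\rho_\varphi(S^j(W)) = (p,a,s)*$ means $S^j(W)$ is fixed by $\partial^2(i_p\circ\varphi)^{h}$ for some $h\ge 1$. By Proposition \ref{prop:isomove}, fixing a point under a homeomorphism and moving by a group element changes the conjugating word in a controlled way: if $Z$ is fixed by $\partial^2\psi$ and $v\in F_N$ is such that $v^{-1}Z$ makes sense, then $v^{-1}Z$ is fixed by $\partial^2(i_{v^{-1}}\circ\psi\circ i_v) = \partial^2(i_{\varphi^k(v)v^{-1}}\circ\varphi^k)$ (for $\psi = i_w\circ\varphi^k$, the composite conjugacy is $i_{\varphi^k(v)wv^{-1}}$). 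So the heart of the proof is: starting from $S^j(W)$ fixed with word $p$ (at power $h$, but let me write the argument for $\varphi$ itself and let powers come out in the wash), undo the $j$ applications of $S$ one at a time and see that the word transforms by exactly $\gm$ (if $j>0$) or its "inverse direction" $\gp$ (if $j<0$).

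First I would fix the case $j = 0$: then $\rho_\varphi(W) = (p,a,s)*$ directly and Lemma \ref{lem:constantdev} gives $W$ fixed by $\partial^2(i_p\circ\varphi)^h$ with $\gm^0(p) = p$, done. For $j > 0$, I would proceed by induction on $j$. The inductive step requires understanding how a single application of $S^{-1}$ affects the conjugating word. Here is where I would use the explicit formula from section \ref{subsec:psr} for $\rho_\varphi(S^{-1}(\cdot))$: if $Z$ has development beginning $(\epsilon, a_0, s_0)(p_1, a_1, s_1)\dots$ with $p_1\ne\epsilon$ (which is the relevant shape since we are pulling back toward the constant tail), decomposing $p_1 = p_1'*a_1'$ the development of $S^{-1}(Z)$ begins $(p_1', a_1', a_1 s_1)(p_i, a_i, s_i)_{i\ge 2}$. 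Tracking the word: if $Z = S^{j-1}(W)$ is fixed by $i_{w_{j-1}}\circ\varphi$ then $S^{-1}(Z)$ is obtained from $Z$ by left-translating by the first letter $a_0$ of the first-coordinate... actually by the appropriate single letter, and Proposition \ref{prop:isomove} converts this into $w_j = \varphi(\text{that letter})^{-1}\,w_{j-1}\,\cdots$ — I would pin down that this is precisely the operation $u \mapsto \varphi(u_p)u_0\cdots u_{p-1} = \gm(u)$ applied to $w_{j-1} = \gm^{j-1}(p)$, giving $w_j = \gm^j(p)$. The case $j < 0$ is symmetric: we apply $S$ instead of $S^{-1}$, use the companion formula for $\rho_\varphi(S(\cdot))$, the relevant half switches from first to second coordinate (hence $w^{-1}$ rather than $w$ appears), and the single-step operation becomes $\gp$; the off-by-one ($|j|-1$ rather than $|j|$) comes from the fact that the last letter $a$ of the constant triplet is already "built in" when $s$ labels the development, matching the asymmetry already visible in Lemma \ref{lem:constantdev} between the roles of $p$ and $s$ and in the $D_{min}$ versus $D_{max}$ distinction.

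The main obstacle I expect is bookkeeping the single-step transformation precisely — in particular, getting the conjugating word right up to the distinction between $w$ and $w^{-1}$, the correct base point of the exponent on $\gm$/$\gp$, and the interaction with the power $h$ (since $W$ is only fixed by a \emph{power} of $i_w\circ\varphi$, not necessarily by $i_w\circ\varphi$ itself). I would handle the power issue by noting that $W$ fixed by $\partial^2(i_w\circ\varphi)^h$ and the claimed formula for $w$ only involves the development up through the first occurrence of the period, so the statement is really about identifying which inner-automorphism-times-$\varphi$ the point sits under, and the exponent $h$ is supplied separately by Lemma \ref{lem:constantdev}; one checks that $(i_w\circ\varphi)^h$ has the form $i_{w'}\circ\varphi^h$ with the relevant $w$-data consistent. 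The purity of $w$ (from Proposition \ref{prop:purity}) and the fact (Lemma \ref{lem:nodeal}) that we never hit the degenerate $D_{min}$/$D_{max}$ situations when $w\ne\epsilon$ guarantee that every intermediate application of $S^{\pm 1}$ is legitimate and the formulas of section \ref{subsec:psr} apply at each step, so the induction does not break down.
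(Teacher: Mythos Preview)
Your plan is sound and, once the single-step computation is carried out, yields a correct proof. The paper takes a more direct route: rather than induct on $j$, it applies Proposition~\ref{prop:isomove} \emph{once} with the full word $u$ (the length-$j$ prefix of $U$, in the case $j>0$), obtaining $w=\varphi(u^{-1})pu$ in one stroke; it then writes $u$ explicitly as $p^{-1}\varphi(p^{-1})\cdots\varphi^{k-1}(p^{-1})r^{-1}$ (with $\varphi^k(p)=qr$), simplifies to $w=\varphi(r)q$, and identifies this with $\gm^j(p)$ via the observation $\gm^{|v|}(v)=\varphi(v)$ for pure positive $v$. Your inductive approach has the virtue of making $\gm$ transparent as the literal one-step transformation: writing $Z_{m-1}=X_0\cdot Z_m$ and conjugating, one gets $w_m=\varphi(X_0^{-1})\,w_{m-1}\,X_0$, and since (by Proposition~\ref{prop:commoncoord}) $X_0$ is the inverse of the last letter of $w_{m-1}$, this is exactly $\gm(w_{m-1})$. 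The off-by-one in case $(2)$ is, as you say, precisely the extra letter $a$ at the front of $V$. One remark: you do not actually need the $S^{\pm1}$ development formulas from section~\ref{subsec:psr} or Lemma~\ref{lem:nodeal} here --- the argument works entirely at the level of points once the relevant first letter is read off from Proposition~\ref{prop:commoncoord}, so the worry about degenerate $D_{\min}/D_{\max}$ situations never arises in this proof.
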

	\begin{proof}
	$(1)$ The case $j=0$ is handled in lemma \ref{lem:constantdev}. We suppose $j>0$. Observe that $p$ can not be empty;
	if it were, the constant part of $\rho_\varphi(W)$ would not be $(p, a, s)$.
	Define $S^j(W) = (U, V)$ and let $u$ be the prefix of length $j$ of $U$; this gives $W = u^{-1}(U, V)$.
	We get from lemma \ref{lem:constantdev}
	and proposition \ref{prop:isomove} that there is an integer $h\ge 1$ such that $W$ is fixed by
	$\partial^2 (i_u\circ (i_p\circ \varphi)\circ i_{u^{-1}})^h = \partial^2 (i_w\circ \varphi)^h$ with
	$w = \varphi(u^{-1})pu$. Observe that $u = p^{-1}\dots \varphi^{k-1}(p^{-1})r^{-1}$ for some integer
	$k$ and some strict suffix $r$ of $\varphi^k(p) = qr$. We deduce $w = \varphi(r)q$.
	The word $p$ is obviously pure positive, and we deduce $j = |r|+\sum\limits_{0\le i\le k-1} |\varphi^i(p)|$.
	Note that $\gm^{|\varphi^i(p)|}(\varphi^i(p)) = \varphi^{i+1}(p)$ for any integer $i\ge 0$ and conclude
	$w = \gm^j(p)$.

	$(2)$ Suppose $j<0$, and observe again that $s$ can not be empty.
	Define $S^j(W) = (U, V)$ and let $v$ be the prefix of length $|j|$ of $V$, giving $W = v^{-1}(U, V)$. Similarly, there is an integer
	$h\ge 1$ such that $W$ is fixed by $\partial^2 (i_w\circ \varphi)^h$ with $w = \varphi(v^{-1})pv$. Observe
	that $v = as\dots \varphi^{k-1}(s)q$ for some integer $k$ and some strict prefix $q$ of $\varphi^k(s) = qr$.
	Recall $\varphi(a)=pas$ and deduce $w = \varphi(q^{-1})r^{-1}$. Again, $s$ is pure positive, and we
	obtain $|j| = |a|+|q| + \sum\limits_{0\le i\le k-1} |\varphi^i(s)|$. Since $\gp^{|\varphi^i(s)|}(\varphi^i(s)) = \varphi^{i+1}(s)$
	for any integer $i\ge 0$, we can conclude $w^{-1} = \gp^{|j|-1}(s)$.
	\end{proof}

	We are now able to state the main result of this section. The theorem below will be used
	as a base to identify singularities.

	\begin{thm}\label{thm:mainresult}
	Let $\varphi$ be an $A_N$-positive primitive automorphism.
	Let $W$ and $W'$ be two distinct points of $\Sv$ with $\rho_\varphi(W) = (p, a, s)*$
	and $\rho_\varphi(W') = (q, b, r)*$.
	\begin{itemize}
		\item $(1)$ If for any $i,j\in \mathds{N}$, we have $\gm^i(p)\ne \gm^j(q)$ (resp. $\gp^i(s)\ne \gp^j(r)$),
		then for any $i,j\in \mathds{N}$, the points $S^{-i}(W)$ and $S^{-j}(W')$ (resp. $S^{i+1}(W)$ and $S^{j+1}(W')$)
		do not belong to a common singularity.
		\item $(2)$ If $i$ and $j$ are the smallest integers such that $\gm^i(p) = \gm^j(q)$ (resp. $\gp^{i}(s) = \gp^{j}(r)$),
		then $S^{-i}(W)$ and $S^{-j}(W')$ (resp. $S^{i+1}(W)$ and $S^{j+1}(W')$) belong to the same singularity $\Omega$.
		Moreover, the singularity $\Omega$ is fixed by $(i_w\circ \varphi)^h$
		for some integer $h\ge 1$ and $w = \gm^i(p)$ (resp. $w^{-1} = \gp^{i}(s)$).
	\end{itemize}
	\end{thm}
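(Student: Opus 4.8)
The plan is to reduce everything to Theorem~\ref{thm:findingw} together with the defining properties of singularities. The key observation is that both $W$ and $W'$ have constant developments, so by Theorem~\ref{thm:findingw} (applied with $j=0$, i.e. lemma~\ref{lem:constantdev}) the point $W$ is fixed by $\partial^2(i_p\circ\varphi)^{h_0}$ and $W'$ by $\partial^2(i_q\circ\varphi)^{h_0'}$. Applying $S^{-i}$ to $W$ corresponds, via Theorem~\ref{thm:findingw}(1), to passing to a point fixed by $\partial^2(i_{w}\circ\varphi)^h$ with $w=\gm^i(p)$; likewise $S^{-j}(W')$ is fixed by $\partial^2(i_{w'}\circ\varphi)^{h'}$ with $w'=\gm^j(q)$. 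Note that for this I need that the constant part of $\rho_\varphi(S^{-i}(W))$ is still $(p,a,s)$ and that $\rho_\varphi(S^j(S^{-i}(W)))=(p,a,s)*$ for $j=i$, which is immediate from the explicit description of the $S$-action on developments in section~\ref{subsec:psr} (shifting a constant development left by one either leaves it constant, when $p\ne\epsilon$, which holds here since otherwise the constant part would not be $(p,a,s)$).

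For part $(1)$, I would argue by contradiction. Suppose $S^{-i}(W)$ and $S^{-j}(W')$ lie in a common singularity $\Omega$. By the definition of a singularity there is a single automorphism $\psi=i_w\circ\varphi^k$ fixing every point of $\Omega$. Since $\partial^2\psi$ fixes a point of $\Sv$ with non-constant-unless-$\epsilon$ prefix structure, proposition~\ref{prop:purity} forces $w$ to be pure, and since both $S^{-i}(W)$ and $S^{-j}(W')$ arise by shifting on the first coordinate, $w$ must be pure positive. Now Theorem~\ref{thm:findingw}(1) pins down $w$: applied to $S^{-i}(W)$ it gives $w=\gm^i(p)$ (up to replacing $k$ by a multiple and adjusting — the point is that the \emph{word} $w$ appearing is exactly $\gm^i(p)$, since the development of $S^{-i}(W)$ is $(p,a,s)*$ preceded by $i$ non-constant terms coming from the shift, and $S^i$ of it is $(p,a,s)*$), and applied to $S^{-j}(W')$ it gives $w=\gm^j(q)$. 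Hence $\gm^i(p)=\gm^j(q)$, contradicting the hypothesis. The symmetric statement for $\gp$ and positive shifts $S^{i+1},S^{j+1}$ is proved the same way, using Theorem~\ref{thm:findingw}(2) and the fact that a common singularity would force $w$ pure negative.

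For part $(2)$, take $i,j$ minimal with $\gm^i(p)=\gm^j(q)=:w$. By Theorem~\ref{thm:findingw}(1), $S^{-i}(W)$ is fixed by $\partial^2(i_w\circ\varphi)^{h_1}$ and $S^{-j}(W')$ is fixed by $\partial^2(i_w\circ\varphi)^{h_2}$ for some $h_1,h_2\ge 1$; choosing $h$ a common multiple, both points are fixed by $\partial^2(i_w\circ\varphi)^{h}$. Since $W\ne W'$ one checks $S^{-i}(W)\ne S^{-j}(W')$ (if they were equal, their developments would coincide, hence $\rho_\varphi(W)$ and $\rho_\varphi(W')$ would differ only by a shift of length $i-j$ on a constant development, which by the minimality of $i,j$ and the explicit $S$-action forces $(p,a,s)=(q,b,r)$ and $i=j$, whence $W=W'$). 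Therefore $\partial^2(i_w\circ\varphi)^h$ fixes at least two distinct points of $\Sv$; to get a genuine singularity one invokes proposition~\ref{prop:commoncoord} (the two points share their first coordinate, equal to $\lim_n \varphi^n(w^{-1})$ suitably interpreted) and the earlier machinery of section~\ref{sec:attsubcontainsall}: the set of all points of $\Sv$ fixed by some power of $i_w\circ\varphi$ is, by definition, the singularity $\Omega$ containing them, and it is fixed by $(i_w\circ\varphi)^h$ with $w=\gm^i(p)$. The symmetric statement follows identically with $\gp$ and $w^{-1}=\gp^i(s)$, giving a singularity fixed by $(i_w\circ\varphi)^h$ with $w$ pure negative.

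I expect the main obstacle to be bookkeeping in the identification of the word $w$: Theorem~\ref{thm:findingw} gives $w=\gm^j(p)$ for the \emph{specific} $j$ equal to the length of the first coordinate prefix consumed, and I must make sure that the index $i$ in the statement (the exponent of $S$) is exactly that length, i.e. that shifting $S^{-1}$ on a constant development $(p,a,s)*$ prepends exactly one triple whose prefix contributes length $1$ (not $|p|$) to the relevant word. This is consistent because $\gm$ is defined so that iterating it $|\varphi^i(p)|$ times sends $\varphi^i(p)$ to $\varphi^{i+1}(p)$, exactly matching how $S$ acts letter-by-letter; nonetheless the alignment between ``number of shifts'' and ``number of $\gm$-iterations'' is the delicate point and must be stated carefully, reusing verbatim the computation $w=\gm^j(p)$ from the proof of Theorem~\ref{thm:findingw} rather than re-deriving it.
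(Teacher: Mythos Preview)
Your overall strategy is the paper's: both parts reduce to Theorem~\ref{thm:findingw}, and your contradiction argument for part~(1) is just the paper's one-line ``direct consequence'' written out. The worry you flag at the end about aligning the number of $S$-shifts with the number of $\gm$-iterations is unfounded: that alignment is precisely the content of Theorem~\ref{thm:findingw}, and you should simply cite it rather than re-derive it.

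There is, however, a genuine gap in your part~(2). Write $(U,V)=S^{-i}(W)$ and $(U',V')=S^{-j}(W')$. You establish only that $S^{-i}(W)\ne S^{-j}(W')$; since $U=U'$ by Proposition~\ref{prop:commoncoord}, this gives $V\ne V'$, but not $V_0\ne V_0'$. The fourth bullet in the definition of a singularity requires two points whose first letters differ, and your appeal to ``the earlier machinery of section~\ref{sec:attsubcontainsall}'' does not supply this. The paper's proof uses the minimality of $i,j$ for exactly this purpose, and for nothing else: if one had $V_0=V_0'$, then applying $S$ once to both points (conjugating the common fixing automorphism by $V_0$) would exhibit $S^{-(i-1)}(W)$ and $S^{-(j-1)}(W')$ as fixed by a common $(i_{w'}\circ\varphi)^{h'}$; Theorem~\ref{thm:findingw} then forces $w'=\gm^{i-1}(p)=\gm^{j-1}(q)$, contradicting minimality. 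You invoke minimality only to argue distinctness, which is both weaker and not what is needed.
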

	\begin{proof}
	Proposition $(1)$ is a direct consequence of theorem \ref{thm:findingw}.

	For proposition $(2)$, define $S^{-i}(W) = (U, V)$ and $S^{-j}(W') = (U', V')$
	(resp. $S^{i+1}(W) = (U, V)$ and $S^{j+1}(W') = (U', V')$) if $\gm^i(p) = \gm^j(q)$ (resp. $\gp^{i}(s) = \gp^{j}(r)$).
	Observe that $i$ and $j$ would not be the smallest integers for which
	$\gm^i(p) = \gm^j(q)$ (resp. $\gp^{i}(s) = \gp^{j}(r)$)
	if we had $V_0 = V_0'$ (resp. $U_0=U_0'$). Conclude with theorem \ref{thm:findingw}.
	\end{proof}

	Keeping the notation of the theorem above, note that it is possible for $S^{-i}(W)$ and $S^{-j}(W')$
	(resp. $S^{i+1}(W)$ and $S^{j+1}(W')$) to
	belong to the same singularity even if $i$ and $j$ are not the smallest integers for which
	$\gm^i(p) = \gm^j(q)$ (resp. $\gp^{i}(s) = \gp^{j}(r)$).
	This is typical of singularities containing two distinct points $(U, V)$ and $(U', V')$ with both $U_0 = U_0'$ and $V_0=V_0'$
	(see example \ref{subsec:ex1}).

	Also, it is important to recall that different points may have the same prefix-suffix development. This happens
	on the $S$-orbits of points of $\Svp$ (see section \ref{subsec:psr}). As theorem \ref{thm:mainresult} only take the prefix-suffix development
	into consideration, if using theorem \ref{thm:mainresult} tells us $S^m(W)$ and $S^n(W')$ belong to $\Omega$, then
	we also know that for any point $W_{(0)}$
	(resp. $W_{(0)}'$) such that $\rho_\varphi(W) = \rho_\varphi(W_{(0)})$ (resp. $\rho_\varphi(W') = \rho_\varphi(W_{(0)}')$),
	the point $S^m(W_{(0)})$ (resp. $S^n(W_{(0)}')$) belongs to $\Omega$.

	\subsection{An algorithm for finding singularities}\label{subsec:algorithm}

		\subsubsection{There is a only finite number of prefix-suffix developments to consider}\label{subsubsec:powerbound}

	We assume again $\varphi$ is an $A_N$-positive primitive automorphism, $\Sv$ is its attracting subshift
	and $\rho_\varphi$ is its prefix-suffix development map.
	We want to prove that singularities can be determined in a finite number of steps using the results of
	section \ref{subsec:identifying}. Namely, we are going to run pairs of points of $\Sv$ with constant
	prefix-suffix developments through theorem \ref{thm:mainresult}.
	Obviously, there are only a finite number of points $W$ such that $\rho_\varphi(W)$ is constant;
	it is given by
	$$\sum\limits_{a\in A_N} (number~of~occurences~of~a~in~\varphi(a)).$$
	However, in order to identify singularities, one should also work with points $W$ such that $\rho_{\varphi^k}(W)$ is constant
	for some integer $k\ge 1$. Bounding the number of such points then comes down to bounding the integer $k$.

	For a point $W$ of $\Sv$ with a preperiodic prefix-suffix development, the length of the minimal period of
	the periodic part of the prefix-suffix development of $W$ is simply called the $\boldsymbol{\rho}$\textbf{-power}
	of $W$.

	\begin{prop}\label{prop:rawpower}
	Let $W$ be a point of singularity $\Omega$. The $\rho$-power of $W$ is at most $4N-4$.
	\end{prop}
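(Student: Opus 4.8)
The plan is to bound the $\rho$-power of a point $W$ in a singularity $\Omega$ by relating it to the combinatorial data (fixed subgroup rank and number of attracting points) of an automorphism $\psi = i_w\circ\varphi^k$ fixing $\Omega$, and then invoking the FO-index bound $\ind(\Phi_\infty)\le N-1$ from Theorem \ref{thm:gjll}. First I would recall that, by Theorem \ref{thm:fixedisperiodic}, if $\partial^2\psi$ with $\psi = i_w\circ\varphi^k$ fixes $W$, then $\rho_\varphi(W)$ is preperiodic with period length at most $k$; conversely, by Theorem \ref{thm:findingw} (and Proposition \ref{prop:periodicisfixed}), a point with periodic development of period length $\ell$ is fixed by some $\partial^2(i_w\circ\varphi)^h$, so the relevant power of $\varphi$ one needs to pass to is exactly the $\rho$-power $\ell$ of $W$. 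So it suffices to bound $\ell$ for points lying in singularities.

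The key step is to pin down what the $\rho$-power controls geometrically. A periodic development of minimal period $\ell$ corresponds, under $\rho_\varphi$, to a periodic orbit of length $\ell$ of the shift-twisted action; following the $S$-action around this period produces an automorphism $i_w\circ\varphi^\ell$ (or a bounded power thereof) in whose fixed set $W$ sits, and crucially the $\ell$ distinct triples $(p_i,a_i,s_i)$ in one period — via Theorem \ref{thm:mainresult} and the maps $\gamma_{\varphi_-}, \gamma_{\varphi_+}$ — force the presence of several distinct points of the attracting subshift fixed by a common automorphism, i.e. they feed into a singularity whose $H_\Omega/\!\sim$ (or $\Omega/\!\approx$) is large. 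Concretely, I would argue that a large minimal period means many orientation-change patterns or many distinct first letters among the coordinates of points of $\Omega$, so that $\#(H_\Omega/\!\sim)$ grows linearly in $\ell$. Since $\#(H_\Omega) \le 2N$ crudely (there are only $2N$ possible first letters, and Proposition \ref{prop:commoncoord} says points of a singularity fixed by $i_w\circ\varphi^k$ with $w\ne\epsilon$ share one coordinate, while Theorem \ref{thm:therealcount} plus $\ind(\Phi_\infty)\le N-1$ caps $\sum(\#(\Omega/\!\approx)-1)$ by $2N-2$), one gets $\ell \le 4N-4$: each of the two halves (prefix side governed by $\gamma_{\varphi_-}$, suffix side by $\gamma_{\varphi_+}$) contributes at most $2N-2$.

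The main obstacle I anticipate is making precise the claim that the minimal period length $\ell$ is bounded by the number of distinct relevant first-letters — a priori a long period could in principle revisit the same first letter many times without the development being genuinely periodic with a shorter period. The way around this is to use that $\varphi$ is $A_N$-positive and primitive: within one period the words $\gamma_{\varphi_-}^i(p)$ (resp. $\gamma_{\varphi_+}^i(s)$) are all distinct (else the period is not minimal), and these are precisely the candidate conjugators $w$; distinct conjugators giving fixed points in the same singularity are controlled by the fixed-subgroup rank, while the passage from one letter $a_i$ to the next is governed by the prefix-suffix automaton, whose vertex set is $A_N$. Combining the automaton's finiteness with the index bound — so that no singularity can "see" more than $\le 2N-2$ independent directions on each side — is what caps $\ell$ at $4N-4$. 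I would present the two sides symmetrically, do the prefix side in detail using $\gamma_{\varphi_-}$ and Theorem \ref{thm:mainresult}(2), and remark the suffix side is identical with $\gamma_{\varphi_+}$.
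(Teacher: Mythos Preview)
Your proposal has a genuine gap: you are trying to bound the $\rho$-power $\ell$ by the size of the \emph{single} singularity $\Omega$ containing $W$, claiming that a large $\ell$ forces $\#(H_\Omega/\!\sim)$ to be large. This is not what happens, and your own worry (``a long period could revisit the same first letter many times'') is exactly the failure mode. A singularity fixed by $i_w\circ\varphi^k$ with $w\ne\epsilon$ may perfectly well have only two points while $k$ is large; nothing in Theorem~\ref{thm:mainresult} or in the maps $\gamma_{\varphi_-},\gamma_{\varphi_+}$ produces new points of $\Omega$ out of the triples $(p_i,a_i,s_i)$ along one period. Your proposed fix --- that the iterates $\gamma_{\varphi_-}^i(p)$ are distinct and ``are precisely the candidate conjugators'' controlled by the fixed-subgroup rank --- conflates two unrelated things: $\gamma_{\varphi_-}$ is used in Theorem~\ref{thm:mainresult} to compare two \emph{different} constant developments, not to walk along the period of a single one.

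The paper's argument is structurally different and is the missing idea. First it splits off the case where the prefix (or suffix) sequence is eventually $\epsilon$: then the periodic tail lives in the sub-automaton of edges with empty prefix, which has exactly one such edge into each vertex, so the period is at most $N$. In the remaining case $w\ne\epsilon$, the key observation is that applying $\partial^2\varphi$ shifts the development by one term, and after an appropriate power of $S$ the image $S^j\circ\partial^2\varphi(W)$ again lies in a singularity $\Omega'$ (also with nontrivial conjugator). Iterating, the $\rho$-power of $W$ is the length of a cycle through points of singularities \emph{not fixed by a power of $\varphi$}. Now one counts globally: by Theorem~\ref{thm:therealcount} together with $\ind(\Phi_\infty)\le N-1$, the total number of such points, summed over all singularities $\Omega\ne\Omega_\varphi$, is at most $4N-4$ (the extremal configuration being $2N-2$ singularities of two points each). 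That global count, not the size of any individual $\Omega$, is what bounds $\ell$.
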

	\begin{proof}
	First assume that $W$ is in a singularity $\Omega$ and that $\rho_\varphi(W) = (p_i, a_i, s_i)_{i\ge 0}$
	with $p_i = \epsilon$ (resp. $s_i = \epsilon$) for any $i\ge i_0$. Observe that for any vertex $a$ of the prefix-suffix automaton,
	there is exactly one edge ending at $a$ whose label has an empty prefix (resp. suffix). We conclude
	the $\rho$-power of $W$ is at most $N$.

	Assume now that $W$ is in a singularity $\Omega$ with $\rho_\varphi(W) = (p_i, a_i, s_i)_{i\ge 0}$
	and that neither $(p_i)_{i\in \mathds{N}}$ nor $(s_i)_{i\in \mathds{N}}$ is eventually constant
	equal to $\epsilon$. Recall from section \ref{subsec:psr} that
	$\rho_\varphi(\partial^2 \varphi(W)) = (\epsilon, b, r)(p_{i-1}, a_{i-1}, s_{i-1})_{i\ge 1}$.
	Observe that $\Omega$ needs to be fixed by an automorphism $\psi = i_w\circ \varphi^k$ with $w\ne \epsilon$.
	Also note that there is an integer $j$ such that $S^j\circ \partial^2 \varphi(W)$ is in a singularity
	$\Omega'$ fixed by an automorphism $\psi' = i_{w'}\circ \varphi^{k'}$ with $w'\ne \epsilon$.
	All that is left to do is count the maximum possible number of distinct points
	of singularities that are not fixed by a power of $\varphi$.

	Recall the FO-index $\ind(\Phi_\infty)$ of $\varphi$ is at most $N-1$ (theorem \ref{thm:gjll})
	and theorem \ref{thm:therealcount} tells us
	$$
		\ind(\Phi_\infty) = \frac{1}{2} ((\# (H_{\Omega_\varphi}) - 2) + \sum\limits_{\Omega\ne \Omega_\varphi} (\# (\Omega/\approx) - 1))
	$$
	where $\Omega_\varphi$ is the singularity fixed by some power of $\varphi$ and the sum is taken over all singularities that are not $\Omega_\varphi$.
	It is now easy to see that the number of distinct points of singularities that are not fixed by a power of $\varphi$
	is maximal when
	\begin{itemize}
		\item no singularity is fixed by a power of $\varphi$,
		\item $\ind(\Phi_\infty) = N-1$
		\item there are $2N-2$ singularities, each containing two points.
	\end{itemize}
	In that case, we have actually shown that $W$ is fixed by an automorphism $\psi = i_w\circ \varphi^k$
	with $k\le 4N-4$, which is more than we wanted.
	\end{proof}

	This bound may very well not be sharp. Define the forward (resp. backward) singularities
	as the singularities such that all the points have a common first (resp. second) coordinate.
	It is likely, however unproven, that the forward (resp. backward) singularities may
	only be responsible for at most half of the maximum FO-index, dropping the bound of the proposition
	above to $2N-2$. One may check $2N-2$ is reached for any $N\ge 2$ by the automorphism
	\begin{center}
		\begin{tabular}{ccccl}
		$\phi$ & $:$ & $a_0$ & $\mapsto$ & $a_0a_1$\\
		&& $a_i$ & $\mapsto$ & $a_{i-1}$ for any $1\le i < N$
		\end{tabular}
	\end{center}

	We elaborate slightly on proposition \ref{prop:rawpower}. First, consider
	a point $W$ of a singularity $\Omega$. There is an automorphism $\psi = i_w\circ \varphi^k$
	such that $\partial^2 \psi(W)=W$, and we assume $k$ to be minimal. If $w\ne \epsilon$,
	then we obtain $k\le 4N-4$ from theorem \ref{thm:therealcount} (the argument is the same
	as the one given in the proof of proposition \ref{prop:rawpower} for points whose prefix-suffix developments
	do not have eventually empty prefixes or suffixes).
	However, as it is seen on the following example, the power $k$ may exceed $4N-4$ if $w$ is empty.
	\begin{center}
		\begin{tabular}{ccccl}
		$\phi$ & $:$ & $a$ & $\mapsto$ & $bf$\\
		&& $b$ & $\mapsto$ & $ca$\\
		&& $c$ & $\mapsto$ & $db$\\
		&& $d$ & $\mapsto$ & $ec$\\
		&& $e$ & $\mapsto$ & $ad$\\
		&& $f$ & $\mapsto$ & $e$\\
		\end{tabular}
	\end{center}
	Define $U = \lim\limits_{n\to +\infty} \varphi^{6n}(a^{-1})$ and
	$V = \lim\limits_{n\to +\infty} \varphi^{5n}(a)$. Since $aa$ is contained in $\varphi^2(b)$,
	then the point $(U, V)$ is in $\Sigma_\phi$. Moreover, $(U, V)$ is in a singularity and
	it is easy to check the smallest integer $k$ such that $\partial^2 \varphi^k(U, V) = (U, V)$ is $k=30$.

	Second, even if for any point $W$ of a singularity $\Omega$, there exists an automorphism $\psi = i_w\circ \varphi^k$
	with $k\le 4N-4$ such that $\partial^2 \psi(W)=W$, the singularity itself may not be fixed by an automorphism
	$i_v\circ \varphi^h$ with $h\le 4N-4$ (assuming $h$ minimal). Consider the following example on $14$ letters.
	\begin{center}
		\begin{tabular}{ccccl}
		$\phi$ & $:$ & $a$ & $\mapsto$ & $bcta$\\
		&& $b$ & $\mapsto$ & $a$\\
		&& $c$ & $\mapsto$ & $data$\\
		&& $d$ & $\mapsto$ & $e$\\
		&& $e$ & $\mapsto$ & $f$\\
		&& $f$ & $\mapsto$ & $g$\\
		&& $g$ & $\mapsto$ & $c$\\
		&& $t$ & $\mapsto$ & $uaca$\\
		&& $u$ & $\mapsto$ & $v$\\
		&& $v$ & $\mapsto$ & $w$\\
		&& $w$ & $\mapsto$ & $x$\\
		&& $x$ & $\mapsto$ & $y$\\
		&& $y$ & $\mapsto$ & $z$\\
		&& $z$ & $\mapsto$ & $t$\\
		\end{tabular}
	\end{center}
	Define $U = \lim\limits_{n\to +\infty} \varphi^{n}(a^{-1})$,
	\begin{itemize}
		\item $V^{(\alpha)} = \lim\limits_{n\to +\infty} \varphi^{2n}(\alpha)$ if $\alpha$ is $a$ or $b$,
		\item $V^{(\alpha)} = \lim\limits_{n\to +\infty} \varphi^{5n}(\alpha)$ if $\alpha$ is $c, d, e, f$ or $g$ and
		\item $V^{(\alpha)} = \lim\limits_{n\to +\infty} \varphi^{7n}(\alpha)$ if $\alpha$ is $t, u, v, w, x, y$ or $z$.
	\end{itemize}
	The points $(U, V^{(\alpha)}$) are fixed by
	\begin{itemize}
		\item $\partial^2 \varphi^2$ if $\alpha$ is $a$ or $b$,
		\item $\partial^2 \varphi^5$ if $\alpha$ is $c, d, e, f$ or $g$
		\item $\partial^2 \varphi^7$ if $\alpha$ is $t, u, v, w, x, y$ or $z$,
	\end{itemize}
	but the singularity $\Omega = \{(U, V^{(\alpha)}); \alpha\in \{a, b, c, d, e, f, g, t, u, v, w, x, y, z\}\}$ is fixed by $\varphi^{70}$.\\

	Thanks to proposition \ref{prop:rawpower}, we now know identifying the singularities can be done
	by running a finite number of pairs of points of $\Sv$ through theorem \ref{thm:mainresult}.
	We still need to verify that theorem \ref{thm:mainresult} effectively answers in a finite time.

		\subsubsection{Comparing two prefix-suffix developments takes a finite time}

	Let $W$ and $W'$ be two points of $\Sv$ such that $\rho_{\varphi^k}(W) = (p, a, s)*$ and
	$\rho_{\varphi^k}(W') = (q, b, r)*$. Suppose $i$ and $j$ are the smallest integers such that
	$w = \gmk^i(p) = \gmk^j(q)$. Then we can find a prefix $w_{(0)}$ of $w$
	such that $w_{(0)} = \varphi^k(ax)y_{(0)} = \varphi^k(by)$ where $a$ and $b$ are distinct elements of $A_N$,
	the words $x$ and $y$ are either empty or pure positive and $y_{(0)}$ is a strict suffix of $\varphi^k(y_p)$
	if $y_p$ is the last letter of $y$. Applying $\varphi^{-k}$ then gives $ax\varphi^{-k}(y_{(0)}) = by$.
	This can only happen if $\varphi^{-k}(y_{(0)}) = u*v$ with $u$ pure negative and such that $|u| = |ax|$
	and $v$ pure positive. Of course there is a similar reasoning to be made for $\gpk$.

	\begin{defn}\label{defn:gbound}
	For any $1\le k\le 4N-4$, define $\mathfrak{S}_k$ (resp. $\mathfrak{P}_k$) as the set of strict
	and non empty suffixes (resp. prefixes) $y$ (resp. $x$) of words $\varphi^k(a)$
	for all $a\in A_N$ that verify $\varphi^{-k}(y) = u_{(y)}*v_{(y)}$ (resp. $\varphi^{-k}(x) = v_{(x)}*u_{(x)}$)
	with $u_{(y)}$ (resp. $u_{(x)}$) empty or pure negative and $v_{(y)}$ (resp. $v_{(x)})$)
	non empty and pure positive.
	
	The $\boldsymbol{\gmk}$\textbf{-bound} (resp. $\boldsymbol{\gpk}$\textbf{-bound})
	is defined by $\max\limits_{y\in \mathfrak{S}_k} \{|u_{(y)}|\}$ (resp. $\max\limits_{x\in \mathfrak{P}_k} \{|u_{(x)}|\}$).
	\end{defn}

	We now prove it takes a finite time to determine whether two points with constant prefix-suffix developments
	can yield (using theorem \ref{thm:mainresult}) two points belonging to a common singularity.

	\begin{prop}\label{prop:wbound}
	Let $W$ and $W'$ be two points of $\Sv$ such that $\rho_{\varphi^k}(W) = (p, a, s)*$ and $\rho_{\varphi^k}(W') = (q, b, r)*$
	with $(p, a, s)\ne (q, b, r)$. There exist integers $i_0$ and $j_0$ such that,
	if for any $0\le m\le i_0$ and $0\le n\le j_0$, we have $\gmk^m(p)\ne \gmk^n(q)$, then
	for any $m, n\in \mathds{N}$, we have $\gmk^m(p)\ne \gmk^n(q)$. The same holds for $\gpk$.
	\end{prop}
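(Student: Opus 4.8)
The goal is to show that the question "is there $m,n \geq 0$ with $\gmk^m(p) = \gmk^n(q)$?" can be settled by checking only finitely many pairs $(m,n)$, with explicit bounds $i_0, j_0$. The key structural fact is that $\gmk$ is, up to the initial transient, essentially a shift: iterating $\gmk$ on a pure positive word $p$ eventually produces words of the form $\varphi^k(\text{something})$ times a bounded correction, and the "bounded correction" is controlled by the $\gmk$-bound of Definition \ref{defn:gbound}. More precisely, I would first establish that once $|\gmk^m(p)|$ exceeds $|\varphi^k(a)|$ for all $a$, the word $\gmk^m(p)$ can be written as $\varphi^k(c_m) \ast y_m$ where $c_m \in F_N$ is pure positive and $y_m$ is a strict (possibly empty) suffix of some $\varphi^k(a)$ lying in $\mathfrak{S}_k$; moreover $|c_m| \to \infty$ and the "phase" $y_m$ cycles through a finite set. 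This is exactly the decomposition used implicitly in the discussion preceding Definition \ref{defn:gbound}, and it is the crux of the argument.

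**Key steps, in order.** (1) Show $|\gmk^{m+1}(p)| = |\gmk^m(p)| + |\varphi(u_p)| - 1$ where $u_p$ is the last letter, so lengths are eventually strictly increasing (using primitivity: $|\varphi(u_p)| \geq 2$ eventually, and actually $|\varphi^k(u_p)|$ grows); hence there is $m_1$ past which all $\gmk^m(p)$ have length $> \max_a |\varphi^k(a)|$. (2) For $m \geq m_1$, derive the decomposition $\gmk^m(p) = \varphi^k(c_m) \ast y_m$ with $y_m \in \mathfrak{S}_k \cup \{\epsilon\}$: this follows by applying $\varphi^{-k}$ and using that $\gmk$ prepends $\varphi(u_p)$ and drops the last letter, so after enough steps the tail stabilizes into an image-suffix of the required form — the set $\mathfrak{S}_k$ is precisely where these tails live. (3) Observe that the map $m \mapsto y_m$ (for $m \geq m_1$) is eventually periodic with period dividing $\#\mathfrak{S}_k + 1$, and on each periodic block $c_m$ grows by a fixed pure positive word, so $|c_m|$ is strictly increasing and unbounded. (4) Do the same for $q$: get $m_1', c_n', y_n'$. (5) Now if $\gmk^m(p) = \gmk^n(q)$ for some large $m,n$, then $\varphi^k(c_m) \ast y_m = \varphi^k(c_n') \ast y_n'$; applying $\varphi^{-k}$ and comparing (using injectivity of $\varphi^k$ and that the correction terms have bounded length, the $\gmk$-bound) forces $c_m$ and $c_n'$ to differ by a bounded-length word and $y_m, y_n'$ to be determined by each other. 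This pins down $m - n$ (or $m-n$ modulo the common period) to lie in a bounded range, and since $|c_m|, |c_n'|$ are strictly increasing, equality can occur for at most one pair in each residue class; collecting all residue classes gives explicit $i_0, j_0$. (6) Finally set $i_0 = m_1 + (\#\mathfrak{S}_k + 1) + (\gmk\text{-bound})$ and similarly $j_0$, generously so that "no collision for $m \leq i_0, n \leq j_0$" implies "no collision ever." The argument for $\gpk$ is identical after replacing suffixes by prefixes and $\mathfrak{S}_k$ by $\mathfrak{P}_k$.

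**Main obstacle.** The delicate point is step (5): controlling how a collision $\varphi^k(c_m) \ast y_m = \varphi^k(c_n') \ast y_n'$ propagates back through $\varphi^{-k}$. Since $\varphi^k$ is injective but not length-multiplicative (cancellations occur), one cannot simply cancel $\varphi^k$ from both sides; instead one uses that the concatenations are cancellation-free ($\ast$) and that $y_m, y_n'$ have length bounded by $\max_a|\varphi^k(a)|$, so $\varphi^k(c_m)$ and $\varphi^k(c_n')$ agree on a long common prefix, whence $c_m, c_n'$ agree on a long common prefix and differ only in a suffix of bounded length — here the $\gmk$-bound of Definition \ref{defn:gbound} is exactly the quantity needed to bound $|c_m| - |c_n'|$. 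Once that bounded difference is established, the strict monotonicity of $|c_m|$ in each periodic residue class closes the argument: for fixed phase, the length $|c_m|$ takes each value at most once, so a collision determines $m$ (and $n$) uniquely, and there are only finitely many phases.
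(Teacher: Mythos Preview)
Your strategy shares the right structural idea---write $\gmk^m(p)=\varphi^k(c_m)*y_m$ and use the $\gmk$-bound---but step~(2) is wrong as stated, and this undermines (3) and~(5). Iterating $\gmk$ prepends $\varphi^k(\ell)$ (with $\ell$ the last letter) and deletes the final letter; tracing from $\gmk^{|p|}(p)=\varphi^k(p)$ onward shows that the natural tail $y_m$ is a strict \emph{prefix} of some $\varphi^k(a)$, not a suffix, and there is no reason it lies in $\mathfrak{S}_k$. The set $\mathfrak{S}_k$ does not parametrize the phases of a single $\gmk$-orbit; it only arises as the overlap when \emph{two} $\varphi^k$-image prefixes of one word are compared (this is exactly the situation in the paragraph preceding Definition~\ref{defn:gbound}). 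Consequently your period bound in~(3) and the ``one collision per residue class'' conclusion in~(5) are unsupported.

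The paper sidesteps all phase-tracking with a one-step descent. Once $m,n$ are large enough that $\gmk^m(p)=\varphi^k(x_{(m)})*x_{(m)}'$ and $\gmk^n(q)=\varphi^k(y_{(n)})*y_{(n)}'$ with $|x_{(m)}|,|y_{(n)}|$ exceeding the $\gmk$-bound $g_k$, any equality $\gmk^m(p)=\gmk^n(q)$ forces the \emph{first letters} of $x_{(m)}$ and $y_{(n)}$ to coincide: aligning the two $\varphi^k$-prefixes gives $\varphi^k(y_{(n)})z=\varphi^k(x_{(m)}'')$ for some $z\in\mathfrak{S}_k\cup\{\epsilon\}$, and $|y_{(n)}|>g_k$ prevents the negative part of $\varphi^{-k}(z)$ from cancelling the first letter of $y_{(n)}$. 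But by the very definition of $\gmk$ that common first letter $\ell$ is the last letter of both $\gmk^{m-1}(p)$ and $\gmk^{n-1}(q)$; stripping the common block $\varphi^k(\ell)$ from the front and restoring $\ell$ at the end yields $\gmk^{m-1}(p)=\gmk^{n-1}(q)$. Iterating drags any collision into a finite box $[0,i_0]\times[0,j_0]$, where $i_0,j_0$ are just the first indices past which the decomposition with $|x|>g_k$ is available (adjusted by one length comparison). No periodicity, residue classes, or monotonicity of $|c_m|$ is needed.
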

	\begin{proof}
	In this proof, the $\gmk$-bound is simply denoted $g_k$.
	First, if $p=\epsilon$ (resp. $q=\epsilon$) and $q\ne \epsilon$ (resp. $p\ne \epsilon$), then
	we obviously have $\gmk^m(p)\ne \gmk^n(q)$ for any $m,n\in \mathds{N}$.

	Suppose now $p\ne \epsilon$ and $q\ne \epsilon$. Let $i$ (resp. $j$) be the smallest positive integer for which
	there exists a pure positive word $x$ (resp. $y$) with $|x| > g_k$ (resp. $|y| > g_k$) such that
	$\gmk^i(p) = \varphi^k(x)*x'$ (resp. $\gmk^j(q) = \varphi^k(y)*y'$) with $x'$ (resp. $y'$) empty or pure positive.
	Observe the existence of such integers is a consequence of the primitivity of $\varphi$.

	If $|\gmk^i(p)| > |\gmk^j(q)|$ (resp. $|\gmk^j(q)| > |\gmk^i(p)|$) then define $i_0=i$ (resp. $j_0=j$) and $j_0$ (resp. $i_0$)
	as the smallest integer such that $|\gmk^{j_0}(q)| > |\gmk^i(p)|$ (resp. $|\gmk^{i_0}(p)| > |\gmk^j(q)|$).
	If $|\gmk^i(p)| = |\gmk^j(q)|$, then define $i_0$ and $j_0$ to be the smallest integers such that $|\gmk^{i_0}(p)| > |\gmk^i(p)|$
	and $|\gmk^{j_0}(q)| > |\gmk^j(q)|$. Again, this is possible because $\varphi$ is primitive.\\

	We now assume for any $0\le m\le i_0$ and $0\le n\le j_0$, we have $\gmk^m(p)\ne \gmk^n(q)$.
	If $m > i_0$ and $n < j$ (resp. $m < i$ and $n > j_0$), then $|\gmk^m(p)| > |\gmk^n(q)|$ (resp. $|\gmk^n(q)| > |\gmk^m(p)|$).

	Suppose $m>i_0$ and $n\ge j$ and $\gmk^{m}(p) = \gmk^{n}(q)$.
	There are pure positive words $x_{(m)}$ (resp. $y_{(n)}$) with $|x_{(m)}| > g_k$ (resp. $|y_{(n)}| > g_k$)
	such that $\gmk^m(p) = \varphi^k(x_{(m)})*x_{(m)}'$ (resp. $\gmk^n(q) = \varphi^k(y_{(n)})*y_{(n)}'$)
	with $x_{(m)}'$ (resp. $y_{(n)}'$) empty or pure positive.
	If $|\varphi^k(x_{(m)})|\ge |\varphi^k(y_{(n)})|$ then there exists a possibly empty strict
	suffix $z$ of $\varphi^k(a)$ for some $a\in A_N$ such that $\varphi^k(y_{(n)})z = \varphi^k(x_{(m)}'')$
	where $x_{(m)}''$ is a prefix of $x_{(m)}$.
	Since $|y_{(n)}| > g_k$, this can only happen if $x_{(m)}$
	and $y_{(n)}$ agree on their first letter, which gives $\gmk^{m-1}(p) = \gmk^{n-1}(q)$. Of course,
	supposing $|\varphi^k(x_{(m)})| < |\varphi^k(y_{(n)})|$ yields a similar result, and we conclude
	iterating the process yields a contradiction.

	We proceed similarly for $m\ge i$ and $n>j_0$ and conclude.
	\end{proof}

	It should be noted the proof effectively constructs the integers $i_0$ and $j_0$.

		\subsubsection{The algorithm}

	Let $\varphi$ be an $A_N$-positive primitive automorphism and let $\Sv$ be its attracting subshift.
	The following algorithm determines all the singularities of $\Sv$ in a finite number of steps.

	\begin{enumerate}[(1)]
		\item Set $k=1$ and determine the $\gmk$-bound and $\gpk$-bound (definition \ref{defn:gbound}).
		\item Define $\mathcal{L}_k$ as the set of loops (cycles of length $1$) of the prefix-suffix automaton associated to $\varphi^k$.
		Note this does not actually require defining the whole automaton. Simply list all triplets $(p, a, s)$ of
		$F_N\times A_N\times F_N$ verifying $\varphi^k(a) = p*a*s$.
		\item For all pairs $((p, a, s), (q, b, r))$ of distinct elements of $\mathcal{L}_k$,
		\begin{enumerate}[(3.1)]
			\item search for the smallest integers (see proposition \ref{prop:wbound} for a condition of existence using the $\gmk$-bound)
			$i$ and $j$ such that $\gmk^i(p) = \gmk^j(q)$. If $i$ and $j$ exist, then
			\begin{enumerate}[(3.{1}.1)]
				\item define $w = \gmk^i(p) = \gmk^j(q)$,
				\item define a new singularity $\Omega$ containing all the points $W$ such that there exist $W_{(0)}$ (resp. $W_{(1)}$)
				with $W = S^{-i}(W_{(0)})$ and $\rho_{\varphi^k}(W_{(0)}) = (p, a, s)*$ (resp.
				$W = S^{-j}(W_{(1)})$ and $\rho_{\varphi^k}(W_{(1)}) = (q, b, r)*$) (see theorem \ref{thm:mainresult}).
				\item Associate the label $\psi = i_w\circ \varphi^k$ to the singularity $\Omega$. Observe that $\psi$
				does not necessarily fix the singularity (section \ref{subsubsec:powerbound}), but one of its power does.
				\item Search (among already known singularities) for a singularity $\Omega_0$ labeled $\psi_0$ with $\psi_0^h=\psi^{h'}$
				for some integers $h, h'$. If $\Omega_0$ exists, then define $\Omega' = \Omega\cup \Omega_0$, associate
				the label $\psi_0$ if $h\ge h'$ and $\psi$ if $h<h'$ (we keep the smallest automorphism as label)
				to $\Omega'$ and disregard singularities $\Omega$ and $\Omega_0$.
				\item Calculate the new current index using theorem \ref{thm:therealcount} and exit if it reaches $N-1$ (theorem \ref{thm:gjll}).
			\end{enumerate}
			\item Search for the smallest integers (see proposition \ref{prop:wbound} for a condition of existence using the $\gpk$-bound)
			$i$ and $j$ such that $\gpk^i(s) = \gpk^j(r)$. If $i$ and $j$ exist, then\label{test}
			\begin{enumerate}[(3.2.1)]
				\item define $w^{-1} = \gmk^i(p) = \gmk^j(q)$,
				\item define a new singularity $\Omega$ containing all the points $W$ such that there exist
				$W = S^{i+1}(W_{(0)})$ and $\rho_{\varphi^k}(W_{(0)}) = (p, a, s)*$ (resp.
				$W = S^{j+1}(W_{(1)})$ and $\rho_{\varphi^k}(W_{(1)}) = (q, b, r)*$) (see theorem \ref{thm:mainresult}).
				\item Reproduce steps (3.1.3), (3.1.4) and (3.1.5).
			\end{enumerate}
		\end{enumerate}
		\item If $k=4N-4$ then calculate the index using theorem \ref{thm:therealcount} and exit.
		If $k<4N-4$ then set $k=k+1$, determine the new $\gmk$-bound and $\gpk$-bound and go back to step (2).
	\end{enumerate}

	Note that the singularity graph can be easily constructed thanks to the next proposition.
	\begin{prop}\label{prop:sameorbit}
	Let $W$ and $W'$ be two points of $\Sv$ with $\rho_{\varphi^k}(W)=(p_i, a_i, s_i)_{0\le i < i_0}(p, a, s)*$ and
	$\rho_{\varphi^k}(W')=(q_i, b_i, r_i)_{0\le i < i_0}(p, a, s)*$.
	\begin{itemize}
		\item If $p\ne \epsilon$ and $s\ne \epsilon$ then $W$ and $W'$ are on the same $S$-orbit.
		\item If $p=\epsilon$ or $s=\epsilon$, then there is a point $W''$ such that
		$\rho_{\varphi^k}(W')=\rho_{\varphi^k}(W'')$ and $W$ and $W''$ are on the same $S$-orbit.
	\end{itemize}
	\end{prop}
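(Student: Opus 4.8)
The plan is to reduce the statement to the case where $W$ and $W'$ have \emph{constant} prefix-suffix developments, after which the conclusion is read off the description of the fibres of $\rho_{\varphi^k}$ from the theorem of \cite{CanSie} recalled in Section~\ref{subsec:psr}. As observed just before Theorem~\ref{thm:findingw}, a point of $\Sv$ with a preperiodic development is carried to a point with a constant development by a suitable power of $S$; since $W$ and $W'$ share the periodic core $(p,a,s)$, this yields integers $j$ and $j'$ with $\rho_{\varphi^k}(S^{j}(W)) = (p,a,s)* = \rho_{\varphi^k}(S^{j'}(W'))$, so that $S^{j}(W)$ and $S^{j'}(W')$ both lie in the fibre $\rho_{\varphi^k}^{-1}((p,a,s)*)$. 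Everything then reduces to understanding that fibre.

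If $p\neq\epsilon$ and $s\neq\epsilon$, the development $(p,a,s)*$ has neither an eventually empty prefix sequence nor an eventually empty suffix sequence, hence it is not in $D_\epsilon$. Since $\rho_{\varphi^k}$ is one-to-one off $\rho_{\varphi^k}^{-1}(D_\epsilon)=\bigcup_{n\in\mathds{Z}}S^n(\Svp)$, the fibre $\rho_{\varphi^k}^{-1}((p,a,s)*)$ is a single point, so $S^{j}(W)=S^{j'}(W')$, i.e. $W=S^{j'-j}(W')$, and $W$ and $W'$ lie on a common $S$-orbit.

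Suppose instead $p=\epsilon$ or $s=\epsilon$; primitivity of $\varphi^k$ rules out $p=s=\epsilon$ (that would force $\varphi^k(a)=a$), so exactly one of them is empty and $(p,a,s)*$ lies in $D_{min}$ or in $D_{max}$. Its fibre is now a subset of $\Svp$ (resp. $S^{-1}(\Svp)$) and may contain several points: $\rho_{\varphi^k}$ sees only one coordinate of such a point ($U$ when the development is in $D_{min}$, $V$ when it is in $D_{max}$), the other being free. The natural candidate is $W'' := S^{j-j'}(W)$, which lies on the $S$-orbit of $W$ by construction, and the task is to check $\rho_{\varphi^k}(W'')=\rho_{\varphi^k}(W')$. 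The key observation is that recovering $W$ from $S^{j}(W)$ — and $W'$ from $S^{j'}(W')$ — proceeds through the shift direction in which $\rho_{\varphi^k}$ \emph{is} determined on a $D_{min}$ (resp. $D_{max}$) development: applying $S^{-1}$ to a $D_{min}$ development, or $S$ to a $D_{max}$ one, is the single undetermined case of Section~\ref{subsec:psr}, and the formulas of Theorem~\ref{thm:findingw} (expressing $w$ via $\gmk^{j}(p)$ for $j\ge 0$ and via $\gpk^{|j|-1}(s)$ for $j<0$) force $j$ and $j'$ to have the sign that avoids it. Hence the development of $S^{j-j'}(W)$ depends only on the common constant development $(p,a,s)*$ and on $j-j'$, exactly as does that of $W'=S^{-j'}(S^{j'}(W'))$; unwinding the explicit $S$-formulas one coordinate at a time (an induction on the common transient length $i_0$) then identifies the two developments and exhibits $W''$.

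The main obstacle is precisely this last case: the coordinate of a point of $\Svp$ that $\rho_{\varphi^k}$ cannot read off is exactly what separates the several points of the fibre, so the induction must be organised to track which of them each shift lands on and to match its invisible coordinate with the one prescribed by $W'$ — in effect one reproves, in this special situation, that the hidden half of the development is reconstructible from the transient data together with the direction in which the relevant shifts are applied. The case $p\neq\epsilon\neq s$, by contrast, is immediate once the reduction to constant developments is in place.
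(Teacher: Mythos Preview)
The paper states Proposition~\ref{prop:sameorbit} without proof, so there is no argument to compare against; your outline stands on its own and is essentially correct. The first case is clean: reducing to constant developments and invoking injectivity of $\rho_{\varphi^k}$ off $D_\epsilon$ is exactly the point. For the second case, your strategy---shift $W$ and $W'$ to points with the common constant development $(p,a,s)*$, set $W'':=S^{j-j'}(W)$, and check $\rho_{\varphi^k}(W'')=\rho_{\varphi^k}(W')$ by observing that the shifts involved go in the development-determined direction---is the right one.

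Two corrections. First, you have the coordinates reversed: on a $D_{min}$ development (all prefixes empty) it is the \emph{second} coordinate $V$ that the suffix formula determines, while $U$ is free; dually for $D_{max}$. This is harmless for the argument but worth fixing. Second, your appeal to Theorem~\ref{thm:findingw} for the sign of $j,j'$ is correct but stated obscurely. The direct statement is: the proof of that theorem (case $j>0$) explicitly observes that $p$ cannot be empty, so when $p=\epsilon$ one necessarily has $j\le 0$; symmetrically, case $j<0$ requires $s\ne\epsilon$, so when $s=\epsilon$ one has $j\ge 0$. Once this sign is fixed, $W''=S^{-j'}(S^{j}(W))$ is obtained from a point of development $(p,a,s)*$ by $|j'|$ shifts in the determined direction, and the explicit $S^{\pm 1}$-formulas of Section~\ref{subsec:psr} then give $\rho_{\varphi^k}(W'')=\rho_{\varphi^k}(W')$ immediately. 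No separate induction on $i_0$ is needed, and the last paragraph of your write-up can be dropped.
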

	One may also observe that two points $W$ and $W'$ of $\Sv$ with
	$\rho_{\varphi^k}(W)=(p_i, a_i, s_i)_{0\le i < i_0}(p, a, s)*$ and
	$\rho_{\varphi^k}(W')=(q_i, b_i, r_i)_{0\le i < i_0}(q, b, r)*$ and $(p, a, s)\ne (q, b, r)$
	may still be on the same orbit if (and this is not sufficient) $p=\epsilon$ (resp. $s=\epsilon$)
	and $r=\epsilon$ (resp. $q=\epsilon$).

\section{Examples}\label{sec:examples}

	\subsection{First example}\label{subsec:ex1}

	Consider the $\{a, b, c\}$-positive primitive automorphism $\phi$ defined by
	\begin{center}
		\begin{tabular}{cccclcccccl}
		$\phi$ & $:$ & $a$ & $\mapsto$ & $ba$ & \hspace{6mm} & $\phi^{-1}$ & $:$ & $a$ & $\mapsto$ & $c^{-1}a$\\
		&& $b$ & $\mapsto$ & $babac$ & & & & $b$ & $\mapsto$ & $c$\\
		&& $c$ & $\mapsto$ & $b$ & & & & $c$ & $\mapsto$ & $a^{-1}a^{-1}b$
		\end{tabular}
	\end{center}
	The set of loops of the prefix-suffix automaton associated to $\phi$ is simply
	$$\mathcal{L}_1 = \{(b, a, \epsilon),~(\epsilon, b, abac),~(ba, b, ac)\}.$$
	One may check that no singularities are created, and we move on to $\phi^2$.
	\begin{center}
		\begin{tabular}{cccclcccccl}
		$\phi^2$ & $:$ & $a$ & $\mapsto$ & $babacba$ & \hspace{6mm} & $\phi^{-2}$ & $:$ & $a$ & $\mapsto$ & $b^{-1}aac^{-1}a$\\
		&& $b$ & $\mapsto$ & $babacbababacbab$ & & & & $b$ & $\mapsto$ & $a^{-1}a^{-1}b$\\
		&& $c$ & $\mapsto$ & $babac$ & & & & $c$ & $\mapsto$ & $a^{-1}ca^{-1}cc$
		\end{tabular}
	\end{center}
	The set $\mathcal{L}_2$ of loops of the prefix-suffix automaton associated to $\phi^2$ is quite large, and we only mention
	relevant triplets. Obviously, there is a singularity
	$$\Omega_0 = \{S^1(W)\in \Sigma_\phi;~\rho_{\varphi^2}(W)\in \{(babacb, a, \epsilon),~(babacbababacba, b, \epsilon),~(baba, c, \epsilon)\}\}.$$
	Considering $\phi^2(u)$ starts with the letter $b$ for any letter $u\in A_N$, we deduce $\Omega_0$ contains exactly $3$ points.
	Also observe the triplets $(babacbaba, b, acbab),~(babac, b, ababacbab)$ are in $\mathcal{L}_2$ and we have
	$$\gamma_{\varphi_-^2}(babac) = \varphi^2(c)baba = babacbaba$$
	which gives the singularity $\Omega_1 = \{W_{(1)}, W_{(1)}'\}$ with
	\begin{center}
		$\rho_{\varphi^2}(W_{(1)})=(babacbaba, b, acbab)*$~~ and~~ $\rho_{\varphi^2}(S^1(W_{(1)}'))=(babac, b, ababacbab)*$.
	\end{center}
	Finally, the triplet $(b, a, bacba)$ is also in $\mathcal{L}_2$ and we have
	$$\gamma_{\varphi_-^2}(b) = \gamma_{\varphi_-^2}(babacbaba)$$
	which gives a singularity $\Omega_2 = \{W_{(2)}, S^{-1}(W_{(1)}), S^{-1}(W_{(1)}')\}$ with
	\begin{center}
		$\rho_{\varphi^2}(S^1(W_{(2)})) = (b, a, bacba)*$.
	\end{center}

	At that point, $\#(H_{\Omega_0}) = 4$, $\#(\Omega_1/\approx) = 2$ and $\#(\Omega_2/\approx) = 2$ (because $S^{-1}(W_{(1)})\approx S^{-1}(W_{(1)}')$)
	and the FO-index of $\phi$ reaches the maximum possible value of $2$.
	We obtain the singularity graph described on figure \ref{fig:SG1}, which contains two connected components. The infinite edges labeled with an element of $\Sigma_\phi^-$
	(resp. $\Sigma_\phi^+$) are conventionally drawn going left (resp. right).

	\begin{figure}[h!]
	\begin{center}
		\scalebox{0.48}{\includegraphics{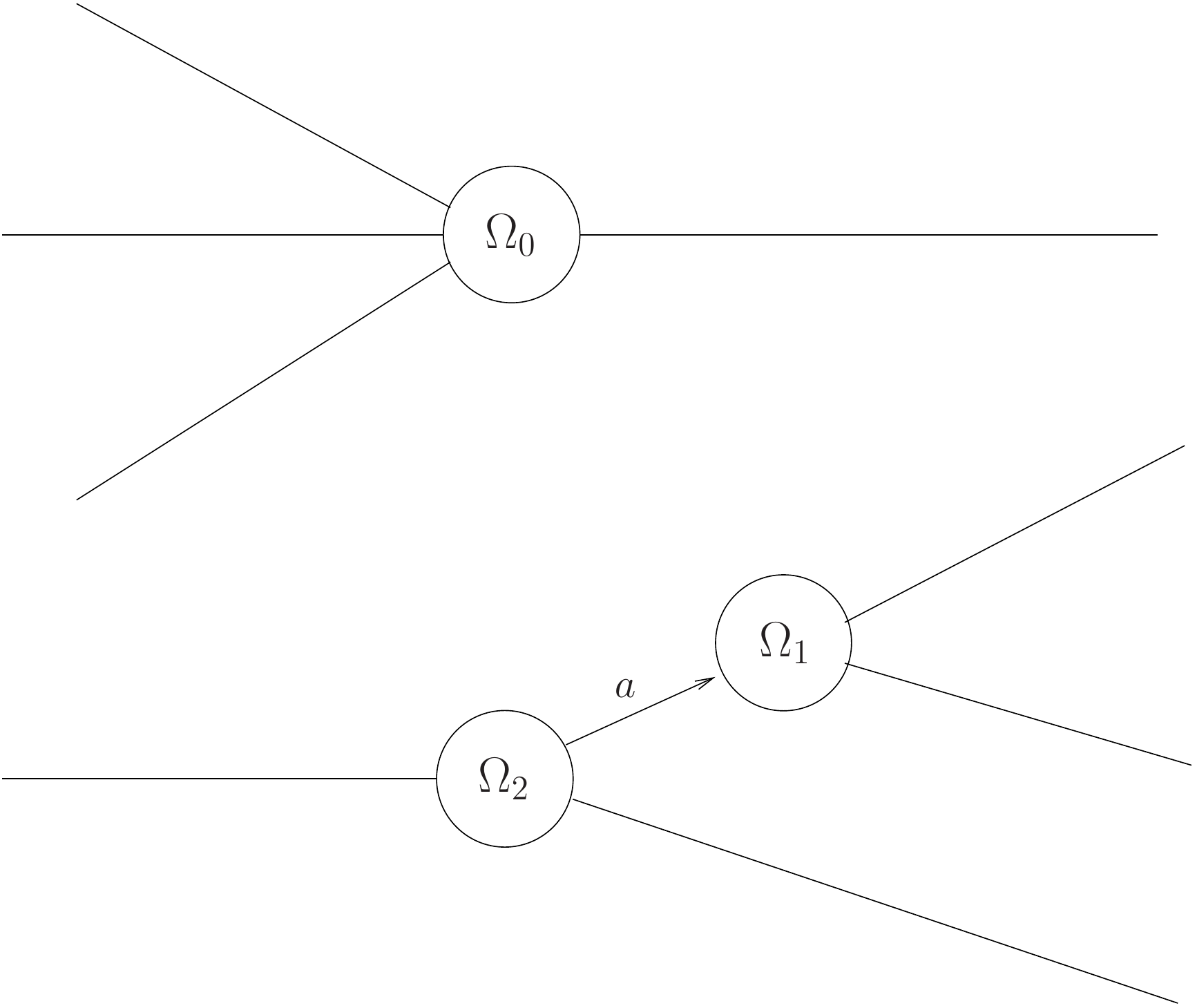}}
	\end{center}
	\vspace{-4mm}
	\caption{Singularity graph associated to $\phi$.}
	\label{fig:SG1}
	\end{figure}

	\subsection{Second example}

	The following example is a standard automorphism coming from a pseudo-Anosov on a surface of genus $2$
	with one boundary component. Alternatively, one can see it as the coding automorphism associated
	to an interval exchange transformation.
	Consider the $\{a, b, c, d\}$-positive primitive automorphism $\phi$ defined by
	\begin{center}
		\begin{tabular}{cccclcccccl}
		$\phi$ & $:$ & $a$ & $\mapsto$ & $abdacd$ & \hspace{6mm} & $\phi^{-1}$ & $:$ & $a$ & $\mapsto$ & $ad^{-1}b^{-1}ad^{-1}$\\
		&& $b$ & $\mapsto$ & $abdbd$ & & & & $b$ & $\mapsto$ & $da^{-1}bd^{-1}cd^{-1}ad^{-1}b^{-1}ad^{-1}$\\
		&& $c$ & $\mapsto$ & $accd$ & & & & $c$ & $\mapsto$ & $da^{-1}bda^{-1}cd^{-1}ad^{-1}b^{-1}ad^{-1}$\\
		&& $d$ & $\mapsto$ & $acd$ & & & & $d$ & $\mapsto$ & $da^{-1}bda^{-1}dc^{-1}d$
		\end{tabular}
	\end{center}
	Note that, since $\phi(u)$ starts with $a$ and ends with $d$ for any letter $u\in A_N$, we can deduce
	the prefix-suffix development map is a bijection. The set of loops of the prefix-suffix automaton associated to $\phi$ is
	$$\mathcal{L}_1 = \{(\epsilon, a, bdacd),~(abd, a, cd),~(a, b, dbd),~(abd, b, d),~(a, c, cd),~(ac, c, d),~(ac, d, \epsilon)\}.$$

	There are a lot of obvious pairs yielding singularities. We get:
	\begin{itemize}
		\item $\Omega_0 = \{W_{(0)}, W_{(0)}'\}$ with $\rho_\varphi(W_{(0)}) = (a, b, dbd)*$ and $\rho_\varphi(W_{(0)}') = (a, c, cd)*$,
		\item $\Omega_1 = \{W_{(1)}, W_{(1)}'\}$ with $\rho_\varphi(W_{(1)}) = (abd, a, cd)*$ and $\rho_\varphi(W_{(1)}') = (abd, b, d)*$,
		\item $\Omega_2 = \{W_{(2)}, W_{(2)}'\}$ with $\rho_\varphi(W_{(2)}) = (ac, c, d)*$ and $\rho_\varphi(W_{(2)}') = (ac, d, \epsilon)*$,
		\item $\Omega_3 = \{W_{(3)}, W_{(3)}'\}$ with $\rho_\varphi(S^{-1}(W_{(3)})) = (abd, a, cd)*$ and $\rho_\varphi(S^{-1}(W_{(3)}')) = (a, c, cd)*$,
		\item $\Omega_4 = \{W_{(4)}, W_{(4)}'\}$ with $\rho_\varphi(S^{-1}(W_{(4)})) = (abd, b, d)*$ and $\rho_\varphi(S^{-1}(W_{(4)}')) = (ac, c, d)*$.
	\end{itemize}
	Also, observe $\gamma_{\varphi_+}(dbd) = bdacd$, and deduce there is a singularity $\Omega_5 = \{W_{(5)}, W_{(5)}'\}$ with
	$\rho_\varphi(S^{-2}(W_{(5)})) = (a, b, dbd)*$ and $\rho_\varphi(S^{-1}(W_{(5)}')) = (\epsilon, a, bdacd)*$.
	Observe that these singularities give $\phi$ a maximum FO-index of $3$.
	We obtain the singularity graph of figure \ref{fig:SG2}, composed of a single connected component.

	\begin{figure}[h!]
	\begin{center}
		\scalebox{0.55}{\includegraphics{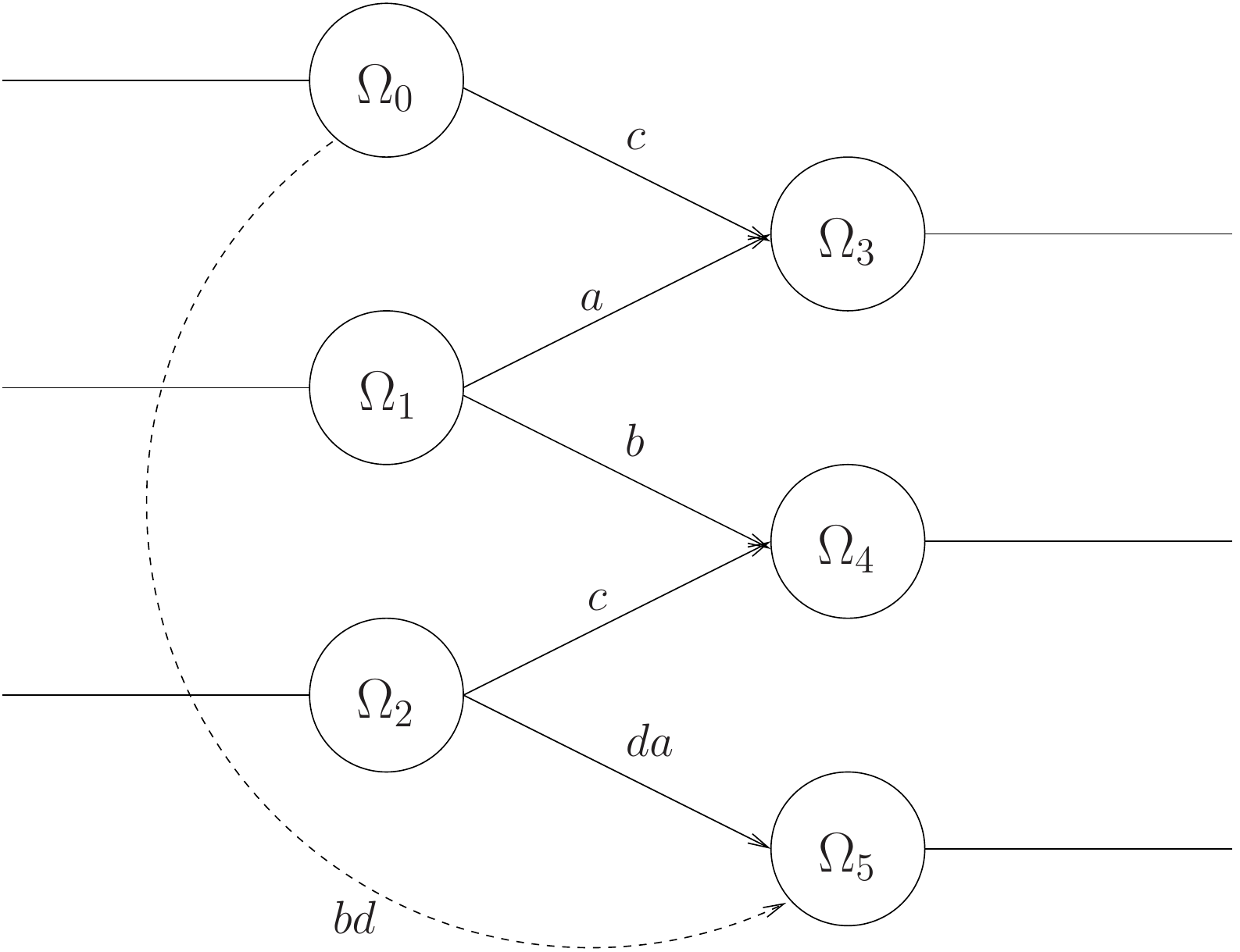}}
	\end{center}
	\vspace{-4mm}
	\caption{Singularity graph associated to $\phi$.}
	\label{fig:SG2}
	\end{figure}

	The graph contains a cycle which, as stated (theorem \ref{thm:insidethm}), tells us about the fixed subgroups of the automorphisms fixing the singularities.
	For example, $\Omega_0$ is fixed by $i_a\circ \phi$, and one can check that the cycle
	$$ca^{-1}bc^{-1}da(bd)^{-1}$$
	starting and ending at $\Omega_0$ in the singularity graph is effectively fixed by $i_a\circ \phi$.

\bibliography{bibli}{}
\bibliographystyle{plain}

\end{document}